\documentclass[11pt]{article}
\usepackage{amsmath,amssymb,amsthm}
\usepackage[margin=1in]{geometry}
\usepackage{hyperref}
\usepackage{enumitem}
\usepackage{mathrsfs}

\newtheorem{theorem}{Theorem}[section]
\newtheorem{lemma}[theorem]{Lemma}
\newtheorem{proposition}[theorem]{Proposition}
\newtheorem{corollary}[theorem]{Corollary}
\theoremstyle{definition}
\newtheorem{definition}[theorem]{Definition}
\newtheorem{remark}[theorem]{Remark}
\newtheorem{example}[theorem]{Example}

\newcommand{\g}{\mathfrak{g}}

\newcommand{\Car}{\operatorname{Car}}

\newcommand{\cC}{\mathcal{C}}
\newcommand{\cH}{\mathcal{H}}

\newcommand{\cG}{\mathcal{G}}

\DeclareMathOperator{\Ad}{Ad}
\DeclareMathOperator{\id}{id}

\title{Formal Neighborhood Equivariant Cohomology and Kirwan Surjectivity at Singular Values}
\author{Zhenxi Huang}

\date{}

\begin{document}
	
	\maketitle
	
	\begin{abstract}
		Let \(G\) be a compact Lie group acting Hamiltonianly on a symplectic manifold \(M\) with moment map \(\mu:M\to\g^*\). For a singular value \(a\) of \(\mu\), the symplectic quotient \(\mu^{-1}(a)/G\) is stratified and the classical Kirwan map is not defined. We introduce a \emph{formal neighborhood equivariant cohomology} \(\cH_G^*(a)\) using the slice-groupoid Cartan sheaf and the closed embedding \(\mu^{-1}(a)/G\hookrightarrow N_a/G\) for a \(G\)-invariant neighborhood \(N_a\) of \(\mu^{-1}(a)\). We define a formal Kirwan map
		\[
		\kappa_{\mathrm{formal}}:H_G^*(M)\longrightarrow \cH_G^*(a).
		\]
		We prove that \(\kappa_{\mathrm{formal}}\) is surjective under a local \emph{equivariant homotopy retraction} condition on the slice level sets
		\[
		C_x=\mu_x^{-1}(a|_{\g_x})\subset S_x,
		\]
		which is satisfied, for example, when \(a\) is a regular value of every slice moment map, or when \(a=0\) and \(C_x\) is a \(G_x\)-equivariant cone. The weakest condition we identify is the existence, for each \(x\in\mu^{-1}(a)\), of a local quasi-isomorphism
		\[
		\Car(S_x,G_x)\simeq \cC^\bullet_{C_x}
		\]
		in a \(G_x\)-equivariant neighborhood of the orbit, without requiring a global tubular neighborhood.
		
		We also prove an \emph{equivariant tubular neighborhood theorem for singular level sets}: under real analytic/proper or \(G\)-invariant Whitney stratified assumptions, there exists a \(G\)-invariant open neighborhood \(U\) of \(\mu^{-1}(a)\) and a \(G\)-equivariant strong deformation retraction \(U\to\mu^{-1}(a)\). When such a neighborhood exists, the formal cohomology is canonically isomorphic to the Borel equivariant cohomology of the level set, and the formal Kirwan map is the classical restriction map. The formal framework therefore yields a Kirwan-type map at the singular value with the surjectivity property.
		
		As an application we recover the even-degree surjectivity theorem of Delarue--Ramacher--Schmitt for \(S^1\)-actions and clarify why odd-degree surjectivity fails in general.
	\end{abstract}
	
	\tableofcontents

	\section{Introduction}
	
	Let \(G\) be a compact Lie group acting Hamiltonianly on a symplectic manifold \((M,\omega)\) with moment map
	\[
	\mu:M\longrightarrow \mathfrak g^*.
	\]
	For a value \(a\in\mathfrak g^*\), the symplectic quotient is
	\[
	M//_aG:=\mu^{-1}(a)/G.
	\]
	When \(a\) is a regular value of \(\mu\) and \(G\) acts freely (or with finite stabilizers) on \(\mu^{-1}(a)\), the quotient \(M//_aG\) is a smooth symplectic manifold (or an orbifold), and the Kirwan map
	\[
	\kappa:H_G^*(M)\longrightarrow H_G^*(\mu^{-1}(a))\cong H^*(M//_aG)
	\]
	is surjective \cite{Kirwan1984}. This surjectivity is a fundamental tool in equivariant symplectic geometry, with applications to the cohomology of moduli spaces, Gromov--Witten theory, and mirror symmetry.
	
	When \(a\) is a singular value of \(\mu\), the level set \(\mu^{-1}(a)\) may fail to be a smooth manifold, the quotient \(M//_aG\) is a stratified space, and the classical Kirwan map is no longer defined in the same form. Extending Kirwan surjectivity to singular values has therefore been a natural and challenging problem.
	
	Two main strategies have been developed. Kiem and Woolf \cite{KiemWoolf2006} replaced the ordinary cohomology of the quotient by intersection cohomology and constructed a Kirwan map
	\[
	H_G^*(M)\longrightarrow IH^*(M//_aG);
	\]
	this map is not surjective in general. More recently, Delarue, Ramacher, and Schmitt \cite{DRS2023} extended Sjamaar's complex of differential forms \cite{Sjamaar2002} to a resolution complex on a partial desingularization of the singular quotient, and defined a resolution Kirwan map
	\[
	\mathcal K:H_{S^1}^*(M)\longrightarrow H^*(\widetilde{\mathcal M}_0).
	\]
	They proved a \emph{weak surjectivity} theorem: the image of \(\mathcal K\) contains the image of the singular cohomology \(H^*(\mathcal M_0;\mathbb R)\) under the natural map to \(H^*(\widetilde{\mathcal M}_0)\). Moreover, when all fixed-point components have vanishing odd cohomology, \(\mathcal K\) is surjective in even degrees; in odd degrees surjectivity fails in general.
	
	In this paper we take a different approach. Instead of replacing the target by intersection cohomology or resolution cohomology, we keep the singular quotient \(\mu^{-1}(a)/G\) and define a cohomology directly on it by restricting the Cartan sheaf of a \(G\)-invariant neighborhood of \(\mu^{-1}(a)\). This is motivated by the slice-groupoid framework of Huang \cite{Huang2026}, where the equivariant cohomology of a groupoid locally modeled on Lie group actions is defined by sheafifying local Cartan complexes and taking hypercohomology.
	
	Let \(N_a\subset M\) be a \(G\)-invariant open neighborhood of
	\[
	A:=\mu^{-1}(a),
	\]
	and set
	\[
	X_a:=N_a/G,\qquad Y_a:=A/G\subset X_a.
	\]
	The inclusion \(i_a:Y_a\hookrightarrow X_a\) is a closed embedding whenever \(N_a\) is an open neighborhood of the closed set \(A\). Let \(\mathcal C^\bullet_{X_a}\) be the slice-groupoid Cartan sheaf on \(X_a\) associated with the action groupoid \(G\ltimes N_a\) \cite[Definition~7]{Huang2026}. We define the \emph{formal neighborhood equivariant cohomology} of \(\mu^{-1}(a)\) by
	\[
	\mathcal H_G^*(a):=\mathbb H^*\bigl(Y_a;\,i_a^{-1}\mathcal C^\bullet_{X_a}\bigr).
	\]
	The \emph{formal Kirwan map} is the composition
	\[
	\kappa_{\mathrm{formal}}:H_G^*(M)\longrightarrow H_G^*(N_a)
	\longrightarrow \mathbb H^*(X_a;\mathcal C^\bullet_{X_a})
	\longrightarrow \mathbb H^*(Y_a;i_a^{-1}\mathcal C^\bullet_{X_a})=\mathcal H_G^*(a).
	\]
	The first map is restriction to the open invariant subset \(N_a\subset M\); the second is the identification of \(H_G^*(N_a)\) with the hypercohomology of the Cartan sheaf on \(X_a\) \cite[Theorem~6]{Huang2026}; the third is induced by \(i_a^{-1}\).
	
	The central question of this paper is:
	
	\begin{quote}
		Under what weakest geometric condition on the slice level sets is the formal Kirwan map \(\kappa_{\mathrm{formal}}\) surjective?
	\end{quote}
	
	Our answer is formulated in terms of a local equivariant homotopy condition. For \(x\in\mu^{-1}(a)\), let \(H=G_x\), and let \(S_x\) be an \(H\)-invariant symplectic slice at \(x\) with slice moment map
	\[
	\mu_x:S_x\longrightarrow \mathfrak h^*.
	\]
	The Marle--Guillemin--Sternberg local normal form gives a \(G\)-invariant neighborhood \(U_x\) of the orbit \(G\cdot x\) such that
	\[
	\mu^{-1}(a)\cap U_x\cong G\times_H C_x,\qquad C_x:=\mu_x^{-1}(a|_{\mathfrak h}).
	\]
	The local model for the quotient is \(C_x/H\).
	
	\begin{definition}[Local equivariant homotopy retraction]
		We say that \(a\) satisfies the \emph{local equivariant homotopy retraction condition} at \(x\in\mu^{-1}(a)\) if there exist an \(H\)-invariant open neighborhood \(V_x\subset S_x\) of \(C_x\) and an \(H\)-equivariant homotopy
		\[
		r_t:V_x\longrightarrow V_x,\qquad t\in[0,1],
		\]
		such that
		\[
		r_0=\mathrm{id}_{V_x},\qquad r_1(V_x)\subset C_x,\qquad r_t|_{C_x}=\mathrm{id}_{C_x}\quad\text{for all }t.
		\]
		We say that the condition holds globally if it holds at every \(x\in\mu^{-1}(a)\).
	\end{definition}
	
	This condition is strictly weaker than the existence of a global \(G\)-equivariant tubular neighborhood of \(\mu^{-1}(a)\) in \(M\). It is a local condition on the slice level sets \(C_x\) and does not require \(C_x\) to be smooth. It implies the existence of a quasi-isomorphism of Cartan complexes
	\[
	\operatorname{Car}(V_x,H)\simeq \operatorname{Car}(S_x,H)|_{C_x}
	\]
	after restriction to \(C_x/H\), because an \(H\)-equivariant homotopy induces a chain homotopy on equivariant differential forms.
	
	Our first main result is the following.
	
	\begin{theorem}[Local-to-global quasi-isomorphism]
		Assume that the local equivariant homotopy retraction condition holds at every \(x\in\mu^{-1}(a)\). Then there is a quasi-isomorphism of complexes of sheaves on \(Y_a=\mu^{-1}(a)/G\):
		\[
		i_a^{-1}\mathcal C^\bullet_{X_a}\;\simeq\; \mathcal C^\bullet_{Y_a},
		\]
		where \(\mathcal C^\bullet_{Y_a}\) is the sheaf whose stalk at \(p\in Y_a\), represented by \(x\in\mu^{-1}(a)\), is the direct limit of the Cartan complexes of the slice level sets \(C_x\) along the refinement system. Consequently,
		\[
		\mathcal H_G^*(a)\cong \mathbb H^*(Y_a;\mathcal C^\bullet_{Y_a}).
		\]
	\end{theorem}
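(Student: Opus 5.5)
We will construct a morphism of complexes of sheaves $i_a^{-1}\mathcal C^\bullet_{X_a}\to \mathcal C^\bullet_{Y_a}$ on $Y_a$ and then check that it is a quasi-isomorphism stalkwise. Since both sides are sheaves on $Y_a$ and hypercohomology sends quasi-isomorphisms to isomorphisms, the final identification $\mathcal H_G^*(a)\cong\mathbb H^*(Y_a;\mathcal C^\bullet_{Y_a})$ then follows from the definition of $\mathcal H_G^*(a)$.

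\textbf{Step 1: the comparison map.} Recall from \cite[Definition~7]{Huang2026} that on a slice chart $U_x\cong G\times_H S_x$ (after shrinking $S_x$), the Cartan sheaf $\mathcal C^\bullet_{X_a}$ is given by $W\mapsto \Car(\pi^{-1}(W)\cap S_x,H)=(\Omega^\bullet(\pi^{-1}(W)\cap S_x)\otimes S(\mathfrak h^*))^H$, and the transition maps come from the refinement system of slices. Pulling back $H$-equivariant forms along the inclusion $C_x\hookrightarrow S_x$ commutes with $d_H$. We interpret forms on the possibly singular $C_x$ as restrictions of ambient forms modulo those vanishing on $C_x$, as in Sjamaar's complex \cite{Sjamaar2002}. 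This pullback is compatible with the slice-change maps, because the MGS normal forms intertwine the level sets $C_x$. It therefore induces a morphism of presheaves on $Y_a$, and hence a morphism from the sheafification $i_a^{-1}\mathcal C^\bullet_{X_a}$ to $\mathcal C^\bullet_{Y_a}$.

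\textbf{Step 2: stalks.} For $p=[x]\in Y_a$ we have
\[
(i_a^{-1}\mathcal C^\bullet_{X_a})_p=\varinjlim_{W\ni p}\Car(W\cap S_x,H),
\]
where the limit runs over neighborhoods $W$ of $p$ in $X_a$. We may take a cofinal system of $H$-invariant neighborhoods of $x$ in $S_x$. Similarly, $(\mathcal C^\bullet_{Y_a})_p=\varinjlim \Car(W\cap C_x,H)$. Under the hypothesis there are a neighborhood $V_x$ and a homotopy $r_t$, and we restrict them to small invariant neighborhoods $V$ of $x$. The equivariant homotopy formula then gives a chain homotopy
\[
K=\int_0^1 \iota_{\partial_t} F^*\,dt,\qquad F(v,t)=r_t(v).
\]
This $K$ commutes with the $H$-action and with multiplication by $S(\mathfrak h^*)$, and it satisfies $d_HK+Kd_H=r_1^*-\id$. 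Since $r_1$ factors through $C_x$ and $r_t|_{C_x}=\id$, the composite $\Car(C_x\cap V)\xrightarrow{r_1^*}\Car(V)\xrightarrow{\text{res}}\Car(C_x\cap V)$ is the identity. Together with the chain homotopy, this shows that restriction is a homotopy equivalence, so the map in Step 1 is a quasi-isomorphism on these models.

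\textbf{Step 3 (main obstacle): cofinality in the direct limit.} The retraction $r_t$ is defined on some $V_x$, but it need not preserve a cofinal family of small neighborhoods of $x$, and the stalk only sees germs at $x$. Our first attempt is to use $H$-equivariant shrinking: choose $V'\subset V_x$ with $r_t(V')\subset V$ for all $t$. Such a $V'$ exists by compactness of $[0,1]$ and continuity, since $r_t(x)=x$. The homotopy then maps $\Car(V)\to\Car(V')$, and this interleaving of the two directed systems suffices, because homotopies that are compatible up to restriction induce isomorphisms on the cohomology of the direct limit (filtered colimits are exact). We also need the homotopy to be smooth, or at least to pull back forms. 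If $r_t$ is only continuous, we replace it by an $H$-averaged smooth approximation rel $C_x$, which is possible on a neighborhood. Alternatively, we use singular-cochain models, which agree with de Rham models on the smooth side. We expect this smoothness/cofinality bookkeeping to be the genuinely delicate point. Finally, a morphism of sheaves that is a quasi-isomorphism on all stalks is a quasi-isomorphism, which concludes the proof.
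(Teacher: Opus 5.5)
Your Step 2 needs $r_t$ to be smooth, with $r_1(V)\subset C_x$ and $r_1|_{C_x}=\id$. Without that you cannot define $r_1^*$ on forms or the operator $K=\int_0^1\iota_{\partial_t}F^*\,dt$. The hypothesis gives only a continuous homotopy. Your proposed repair, an $H$-averaged smooth approximation rel $C_x$, cannot exist when $C_x$ is genuinely singular.

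Take the cone $C_0=\{|z|=|w|\}\subset\mathbb C^2$ of Example~\ref{ex-cone}, with $x=0$ the vertex. Suppose $r_1$ is $C^1$ near $0$ and $r_1|_{C_0}=\id$. Then $r_1(tv)=tv$ for $v\in C_0$ and small $t>0$, so $dr_1(0)v=v$ on $C_0$. Since $C_0$ spans $\mathbb R^4$, $dr_1(0)=\id$. By the inverse function theorem $r_1$ is then a local diffeomorphism at $0$, which is incompatible with $r_1$ taking values in the hypersurface $C_0$.

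So the identity $\mathrm{res}\circ r_1^*=\id$ on $\Car(C_x\cap V,H)$, which is the crux of your argument, is unavailable exactly at the singular points where the theorem has content. This is not bookkeeping. The interleaving in Step 3 is fine; the failure is in Step 2. The singular-cochain fallback does not close the gap either. Your $\mathcal C^\bullet_{Y_a}$ is built from Sjamaar-type forms on $C_x$, so matching its stalk cohomology with equivariant singular cochains of $C_x$ is an equivariant de Rham theorem on a singular space — essentially the statement you are trying to prove.

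Within your reading, the fix is to contract while preserving $C_x$, rather than retract onto it. In the Marle--Guillemin--Sternberg model, $C_x$ is the zero set of the homogeneous quadratic slice moment map. It is therefore a cone with conical orbit-type strata. The map $\rho_t(v)=tv$ is smooth and $H$-equivariant, and it preserves $C_x$, each stratum, and a cofinal family of invariant balls about $x$. Its homotopy operator anticommutes with the contractions $\iota_\xi$. It also preserves the ideal of forms vanishing on strata, since $F(Z\times(0,1])\subset Z$ for each stratum $Z$. Hence $d_HK+Kd_H=\id-\rho_0^*$ holds on both germ complexes, so both have cohomology $S(\mathfrak h^*)^H$. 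Restriction matches the constants, so it is a stalkwise quasi-isomorphism. You would still need to check the slice-change compatibility claimed in Step 1.

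The paper sidesteps the issue by reading the statement differently. Definition~\ref{def-intrinsic-sheaf} takes the stalk to be $\varinjlim_{C_x\subset V\subset S_x}\Car(V,H)$. These are Cartan complexes of ambient open neighborhoods $V$ of all of $C_x$ (not germs at $x$) in the smooth slice, not forms on $C_x$. With that definition, $\Phi_p$ is the identity after the slice reduction of Proposition~\ref{prop-local-stalk}. The retraction hypothesis enters only through Lemmas~\ref{lem-stability} and~\ref{lem-equivariant-homotopy}, to replace the colimit by the single complex $\Car(V_x,H)$. Only open subsets of a manifold are compared there, so nothing is ever pulled back to the singular set.
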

	
	Our second main result identifies a geometrically natural situation in which the formal cohomology reduces to the Borel equivariant cohomology of the level set.
	
	\begin{theorem}[Equivariant tubular neighborhood case]
		Assume that there exists a \(G\)-invariant open neighborhood \(N_a\subset M\) of \(A=\mu^{-1}(a)\) and a \(G\)-equivariant strong deformation retraction
		\[
		r:N_a\times[0,1]\longrightarrow N_a
		\]
		with \(r_0=\id_{N_a}\), \(r_1(N_a)\subset A\), and \(r_t|_A=\id_A\). Then
		\[
		\mathcal H_G^*(a)\cong H_G^*(\mu^{-1}(a)),
		\]
		and under this isomorphism the formal Kirwan map is the classical restriction map
		\[
		H_G^*(M)\longrightarrow H_G^*(\mu^{-1}(a)).
		\]
	\end{theorem}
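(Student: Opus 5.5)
The plan is to compare three objects: the hypercohomology of the restricted Cartan sheaf $i_a^{-1}\mathcal C^\bullet_{X_a}$ on $Y_a$, the hypercohomology of $\mathcal C^\bullet_{X_a}$ on all of $X_a$, and the Borel cohomology $H_G^*(N_a)$. The deformation retraction enters in two places. It shows that $H_G^*(N_a)\to H_G^*(A)$ is an isomorphism. It also shows that the restriction $\mathbb H^*(X_a;\mathcal C^\bullet_{X_a})\to\mathbb H^*(Y_a;i_a^{-1}\mathcal C^\bullet_{X_a})$ is an isomorphism.

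The first point is standard. Because $r$ is $G$-equivariant, it gives a $G$-homotopy equivalence between $N_a$ and $A$, with homotopy inverse the inclusion $A\hookrightarrow N_a$. Applying $EG\times_G(-)$ turns this into an ordinary homotopy equivalence of Borel constructions. Hence the inclusion induces an isomorphism $H_G^*(N_a)\cong H_G^*(A)$, and the restriction $H_G^*(M)\to H_G^*(N_a)\to H_G^*(A)$ is the classical restriction map.

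For the second point I will use the standard fact that, for a closed subset $Y$ of a paracompact space $X$ and a bounded-below complex of sheaves $\mathcal F$,
\[
\mathbb H^*(Y;i^{-1}\mathcal F)\cong \varinjlim_{W\supset Y}\mathbb H^*(W;\mathcal F|_W),
\]
the colimit running over open neighborhoods $W$ of $Y$. This holds because $X_a=N_a/G$ is paracompact and Hausdorff and $Y_a$ is closed; the Cartan sheaf, being a complex of fine (or soft) sheaves, is especially convenient here. The neighborhoods $W$ can be taken of the form $W=U/G$ with $U\subset N_a$ a $G$-invariant open neighborhood of $A$. By \cite[Theorem~6]{Huang2026} applied to $G\ltimes U$, each term $\mathbb H^*(W;\mathcal C^\bullet)$ is identified with $H_G^*(U)$, naturally in $U$. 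So $\mathcal H_G^*(a)\cong\varinjlim_U H_G^*(U)$. It then suffices to show that every transition map $H_G^*(U)\to H_G^*(U')$ is an isomorphism, since the colimit then equals $H_G^*(N_a)\cong H_G^*(A)$.

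This is the main obstacle. A smaller $G$-invariant neighborhood $U\subset N_a$ need not be preserved by the retraction $r$, so $U$ need not retract onto $A$. I will handle this with a cofinality argument. For each invariant open $U\supset A$, consider
\[
U^\flat=\{y\in N_a: r_t(y)\in U \text{ for all } t\in[0,1]\}.
\]
This set is $G$-invariant because $r$ is equivariant. It contains $A$ because $r_t|_A=\mathrm{id}$. It is open by compactness of $[0,1]$ (tube lemma). It is preserved by every $r_s$, up to reparametrizing the homotopy as $r_{\max(s,t)}$; equivalently, I replace $U^\flat$ by the image-closed version built from the path $t\mapsto r_{\max(s,t)}$. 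Hence $r$ restricts to a $G$-equivariant deformation retraction of $U^\flat$ onto $A$, so $H_G^*(U^\flat)\cong H_G^*(A)$. The sets of the form $U^\flat$ are cofinal among invariant neighborhoods, and on them all transition maps are isomorphisms. The colimit is therefore $H_G^*(A)$, and the map from $H_G^*(N_a)$ into it is restriction.

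Finally I check compatibility. The formal Kirwan map is restriction to $N_a$, followed by the isomorphism of \cite[Theorem~6]{Huang2026}, followed by $i_a^{-1}$. Under the identifications above, the last step is the canonical map into the colimit, so the composite is $H_G^*(M)\to H_G^*(A)$. For this I need the naturality of \cite[Theorem~6]{Huang2026} with respect to invariant open inclusions, which I take to hold by construction of the Cartan sheaf.
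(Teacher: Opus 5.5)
\paragraph{Gap in the cofinality step.} The set $U^\flat=\{y\in N_a: r_t(y)\in U\ \forall t\}$ is open, $G$-invariant and contains $A$, but it is not stable under the $r_s$. The point $r_t(r_s(y))$ bears no relation to $r_{\max(s,t)}(y)$ unless $r$ is a semiflow, and there is no ``image-closed'' open replacement in general. So the claims that $r$ restricts to a deformation retraction of $U^\flat$ onto $A$, that $H_G^*(U^\flat)\cong H_G^*(A)$, and that all transition maps among the $U^\flat$ are isomorphisms are false in general.

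Already Example~\ref{ex-singular-smooth} gives a counterexample: $S^1$ acting on $\mathbb C$, $\mu=|z|^2$, $a=0$, $A=\{0\}$, $N_a=\mathbb C$. Take $U=\{|z|<R\}$, $r_t(0)=0$ and $r_t(z)=\rho_t(|z|)\,z/|z|$ for $z\neq 0$, with $\rho_t(s)=(1-t)s+t(1-t)B(s)$. Here $B\ge 0$ is a bump supported in $(R/3,2R/3)$ with maximum $10R$. This is an $S^1$-equivariant strong deformation retraction of $\mathbb C$ onto $0$. The set $U^\flat$ contains $\{|z|\le R/3\}$ and $\{2R/3\le|z|<R\}$. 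It misses the circle where $B=10R$, since there $\rho_{1/2}>R$. Hence $U^\flat$ is disconnected and $H^0_{S^1}(U^\flat)\not\cong H^0_{S^1}(A)=\mathbb R$.

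\paragraph{Repair.} The conclusion you want is still true, and it follows from your own ingredients without any neighborhood retracting onto $A$. For $y\in U^\flat$ the whole path $t\mapsto r_t(y)$ lies in $U$. So $r|_{U^\flat\times[0,1]}$ is a $G$-homotopy \emph{inside $U$} from the inclusion $U^\flat\hookrightarrow U$ to $U^\flat\xrightarrow{r_1}A\hookrightarrow U$. Hence the restriction $H_G^*(U)\to H_G^*(U^\flat)$ equals $r_1^*\circ(\,\cdot\,)|_A$.

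A class on $U$ that vanishes on $A$ therefore already vanishes on $U^\flat$, so $\varinjlim_U H_G^*(U)\to H_G^*(A)$ is injective. It is surjective because the isomorphism $H_G^*(N_a)\to H_G^*(A)$ factors through it. With this replacement the rest of your argument goes through: tautness of the closed set $Y_a$ in the metrizable space $X_a$ for the complex of fine Cartan sheaves, Huang's Theorem~6 on each $U/G$ and its naturality, and the equivariant de Rham identification of Cartan with Borel cohomology on open $U$.

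\paragraph{Comparison with the paper.} This is a genuinely different route. The paper takes the quasi-isomorphism $\Car(N_a,G)\to\Car(A,G)$ from Lemma~\ref{lem-equivariant-homotopy}. It then asserts that this passes to a quasi-isomorphism of sheaves $i_a^{-1}\cC^\bullet_{X_a}\simeq\cC^\bullet_{Y_a}$ ``by the local nature of the Cartan complex''. That would be a stronger, sheaf-level statement. However, passing from a global quasi-isomorphism to stalks requires controlling arbitrarily small invariant neighborhoods of single orbits, and the global retraction alone does not provide that.

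Your argument avoids stalks and uses the retraction only once, through the factorization above. It also shows in passing that $\cH_G^*(a;N_a)$ is always the germ $\varinjlim_{U\supset A}H_G^*(U)$, hence independent of $N_a$. Finally, it makes the identification of $\kappa_{\mathrm{formal}}$ with restriction immediate.
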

	
	The hypothesis of this theorem is satisfied in many singular situations. We prove the following geometric input.
	
	\begin{theorem}[Equivariant tubular neighborhood of a singular level set]
		Assume that one of the following conditions holds:
		\begin{enumerate}[label=(\roman*)]
			\item \(M,\omega\), the \(G\)-action, and \(\mu\) are real analytic, and \(\mu\) is proper;
			\item the closed subset \(A=\mu^{-1}(a)\subset M\) admits a \(G\)-invariant Whitney stratification preserved by the \(G\)-action.
		\end{enumerate}
		Then there exist a \(G\)-invariant open neighborhood \(U\subset M\) of \(A\) and a \(G\)-equivariant strong deformation retraction
		\[
		r:U\times[0,1]\longrightarrow U
		\]
		such that
		\[
		r_0=\id_U,\qquad r_1(U)=A,\qquad r_t|_A=\id_A\quad (t\in[0,1]).
		\]
		Consequently, \(A\) is a \(G\)-equivariant strong deformation retract of \(U\).
	\end{theorem}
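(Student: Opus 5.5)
We reduce both cases to one input: \(A\) is a closed \(G\)-invariant subset of \(M\) carrying a \(G\)-invariant Whitney stratification. Then we build an equivariant regular neighborhood using Thom--Mather control data. In case (i) this input comes from real analytic geometry. Since \(\mu\) is real analytic, \(A=\mu^{-1}(a)\) is a closed real analytic subvariety of \(M\). Since \(\mu\) is proper, \(A\) is compact. Every real analytic (hence subanalytic) closed set admits a Whitney stratification. To make it \(G\)-invariant, we use that \(G\) is a compact Lie group acting real analytically, so it acts properly and subanalytically. We refine the canonical Whitney stratification of \(A\) by the orbit type decomposition \(A\cap M_{(K)}\), which is locally finite and subanalytic. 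The canonical (coarsest) Whitney stratification of a subanalytic set is invariant under all analytic automorphisms preserving the set, hence \(G\)-invariant; intersecting with orbit types and re-stratifying canonically keeps invariance. So case (i) reduces to case (ii), with the additional benefit that the strata are \(G\)-manifolds of constant orbit type.

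In case (ii), we first average a Riemannian metric over \(G\) (Haar measure) to get a \(G\)-invariant metric on \(M\). For each stratum \(S\) of \(A\), the exponential map of this metric on the normal bundle of \(S\) gives \(G\)-equivariant tubular data \((T_S,\pi_S,\rho_S)\), with \(\rho_S\) the squared normal distance. We then run Mather's inductive construction of \emph{controlled} tubular neighborhoods, by increasing dimension of strata, imposing the commutation relations \(\pi_S\pi_R=\pi_S\) and \(\rho_S\pi_R=\rho_S\) near \(S\subset\overline R\). Each step uses local modifications glued by partitions of unity. Equivariance is kept by choosing \(G\)-invariant partitions of unity (again by averaging) and by noting that the local corrections are produced by flows of vector fields, which we may average. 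The subtle point is that averaging vector fields preserves the control conditions only because these conditions are linear in the vector field and \(G\) preserves \(\pi_S\) and \(\rho_S\). We check this at each inductive step.

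With \(G\)-equivariant control data in hand, the standard argument (Goresky's regular neighborhood, or Durfee's rug-function construction) produces the retraction. We construct a continuous \(G\)-invariant function \(\phi\ge0\) with \(\phi^{-1}(0)=A\) that is smooth off \(A\) and satisfies \(d\phi(\xi)>0\) for a controlled, stratified, \(G\)-invariant vector field \(\xi\) on \(U\setminus A\) pointing away from \(A\). Let \(U=\{\phi<\varepsilon\}\), where \(\varepsilon\) may be a positive \(G\)-invariant function if \(A\) is noncompact. Flowing along \(-\xi/d\phi(\xi)\) decreases \(\phi\) at unit rate. Control guarantees that the flow lines converge, as \(\phi\to0\), to points of \(A\), and that the limit map is continuous. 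Reparametrizing time over \([0,1]\) then gives \(r_t\) with \(r_0=\id_U\), \(r_1(U)=A\) and \(r_t|_A=\id_A\). \(G\)-equivariance holds because \(\phi\) and \(\xi\) are \(G\)-invariant, so the flow commutes with the action.

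The main obstacle is continuity of the limit retraction \(r_1\) across strata. Flow lines approaching a lower-dimensional stratum must land in the correct stratum continuously. This is exactly where Whitney condition (b), through control data, is needed. I would handle it by following Goresky's proof that Whitney stratified sets have mapping-cylinder neighborhoods and inserting the averaging step. A secondary concern is whether the canonical Whitney stratification is genuinely \(G\)-invariant in case (i). If that is doubtful, I would instead apply the equivariant local normal form: stratify \(C_x\subset S_x\) \(H\)-invariantly, a semialgebraic problem for a polynomial slice moment map, and transport the stratification via \(G\times_H C_x\).
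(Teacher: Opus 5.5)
Your proposal follows the same route as the paper: reduce case (i) to a $G$-invariant Whitney stratification of $A$ (the paper does this via the slice models $C_x$ and the orbit-type decomposition, which is your fallback), then apply an equivariant Thom--Mather construction. It is more complete than the paper's sketch in two places: you average only linear data (metrics, partitions of unity, controlled vector fields) instead of ``averaging the deformation retraction'', and you supply the controlled-flow/regular-neighborhood step that turns control data into an actual retraction.

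Like the paper's Section~2 version, you tacitly use $G\cdot A=A$, i.e.\ that $a$ is fixed by the coadjoint action. This is genuinely needed in case (i): for $M\cong S^2$ a coadjoint orbit of $SU(2)$ and $a\in M$, the only $G$-invariant neighborhood of $A=\{a\}$ is $M$ itself. No $G$-equivariant map $M\to M$ can have image $\{a\}$, since $a$ is not fixed by $G$, so the conclusion fails.
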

	
	Thus, under the real analytic/proper or Whitney stratified assumptions, the equivariant tubular neighborhood case applies and
	\[
	\mathcal H_G^*(a)\cong H_G^*(\mu^{-1}(a)).
	\]
	In particular, the formal Kirwan map is the classical restriction map in these situations.
	
	The local equivariant homotopy retraction condition is still useful: it applies even when no global equivariant tubular neighborhood is available. In that case \(\mathcal H_G^*(a)\) is a genuinely new target, and the formal Kirwan map can be surjective under checkable local hypotheses.
	
	The paper is organized as follows. Section~2 recalls the slice-groupoid Cartan sheaf, defines the formal neighborhood equivariant cohomology and the formal Kirwan map, and proves the equivariant tubular neighborhood theorem for singular level sets. Section~3 introduces the local equivariant homotopy retraction condition, proves the local-to-global quasi-isomorphism theorem, and establishes the tubular-neighborhood and singular-cone surjectivity theorems. Section~4 compares the formal framework with the intersection cohomology approach of Kiem--Woolf \cite{KiemWoolf2006} and the resolution cohomology approach of Delarue--Ramacher--Schmitt \cite{DRS2023}. Section~5 discusses examples. Section~6 concludes with open problems.

	\section{Formal neighborhood equivariant cohomology}\label{sec-formal}
	
	Throughout this section, \(G\) is a compact Lie group, \((M,\omega)\) is a Hamiltonian \(G\)-manifold with moment map
	\[
	\mu:M\longrightarrow \mathfrak g^*,
	\]
	and \(a\in\mathfrak g^*\) is a value of \(\mu\), possibly singular. All cohomology groups are taken with coefficients in \(\mathbb R\), unless otherwise stated. We write \(\pi:N\to N/G\) for the quotient map of a \(G\)-manifold \(N\). Set
	\[
	A:=\mu^{-1}(a).
	\]
	
	\subsection{The slice-groupoid Cartan sheaf}\label{subsec-slice-sheaf}
	
	We first recall the construction of the Cartan sheaf associated with a proper \(G\)-manifold, following \cite[Definition~7 and Proposition~7]{Huang2026}. Let \(N\) be a smooth \(G\)-manifold with \(G\) compact. The \emph{ordinary Cartan sheaf} on \(X=N/G\) is the sheaf of complexes
	\[
	\cC_{G,N}^\bullet:U\longmapsto \operatorname{Car}(\pi^{-1}(U),G),
	\]
	where \(U\subset X\) is open and the restriction maps are induced by inclusions of open sets. Its global sections are
	\[
	\Gamma(X,\cC_{G,N}^\bullet)=\operatorname{Car}(N,G).
	\]
	The stalk at \(y\in X\) is
	\[
	(\cC_{G,N}^\bullet)_y
	=\varinjlim_{y\in U\subset X}\operatorname{Car}(\pi^{-1}(U),G).
	\]
	If \(N\) is proper and \(X\) is paracompact, then each \(\cC_{G,N}^q\) is a fine sheaf by \cite[Proposition~8]{Huang2026}, hence acyclic. Consequently,
	\[
	\mathbb H^q(X;\cC_{G,N}^\bullet)\cong H^q(\Gamma(X,\cC_{G,N}^\bullet))
	=H^q(\operatorname{Car}(N,G))\cong H_G^q(N).
	\]
	
	When \(N\) is not globally a manifold, but the action groupoid \(G\ltimes N\) is locally modeled on action Lie groupoids, Huang constructs a \emph{slice-groupoid Cartan sheaf} \(\cC_{\cG_N}^\bullet\) on \(N/G\) by sheafifying the local Cartan complexes \(\operatorname{Car}(S_x,G_x)\) along a compatible system of slice charts; see \cite[Definitions~4--7]{Huang2026}. The key properties are:
	\begin{enumerate}[label=(\roman*)]
		\item the stalk at \(y\in N/G\) is the filtered direct limit
		\[
		(\cC_{\cG_N}^\bullet)_y
		\cong \varinjlim_{U\ni y}\operatorname{Car}(S_U,G_U),
		\]
		where the limit runs over slice charts around \(y\);
		\item the local slice reduction morphisms
		\[
		\operatorname{Red}_{C,U}:\operatorname{Car}(\pi^{-1}(U),G)
		\longrightarrow \operatorname{Car}(S_U,G_U)
		\]
		are quasi-isomorphisms by \cite[Theorem~3]{Huang2026};
		\item when \(G\ltimes N\) is the action groupoid of a proper \(G\)-manifold, there is a natural quasi-isomorphism
		\[
		\cC_{G,N}^\bullet\longrightarrow \cC_{\cG_N}^\bullet
		\]
		of complexes of sheaves, and hence
		\[
		H_G^*(N)\cong \mathbb H^*(N/G;\cC_{\cG_N}^\bullet)
		\]
		by \cite[Theorem~6]{Huang2026}.
	\end{enumerate}
	
	In the present paper, we shall only need the case where the ambient manifold is open in \(M\), so that the ordinary Cartan sheaf and the slice-groupoid Cartan sheaf are quasi-isomorphic. We therefore use the notation \(\cC^\bullet_{X}\) for the slice-groupoid Cartan sheaf of an action groupoid whose orbit space is \(X\).
	
	\begin{remark}\label{rem-ordinary-vs-slice}
		For an open \(G\)-invariant subset \(N_a\subset M\), the quotient \(X_a=N_a/G\) is an open subset of \(M/G\). The ordinary Cartan sheaf \(\cC_{G,N_a}^\bullet\) and the slice-groupoid Cartan sheaf \(\cC^\bullet_{X_a}\) are quasi-isomorphic by \cite[Proposition~7 and Theorem~6]{Huang2026}. Consequently, their hypercohomologies agree:
		\[
		\mathbb H^*(X_a;\cC_{G,N_a}^\bullet)
		\cong \mathbb H^*(X_a;\cC^\bullet_{X_a}).
		\]
		We shall henceforth write \(\cC^\bullet_{X_a}\) for either representative of this quasi-isomorphism class.
	\end{remark}
	
	\subsection{The closed embedding and the formal restriction}\label{subsec-closed-embedding}
	
	Choose a \(G\)-invariant open neighborhood \(N_a\subset M\) of \(A=\mu^{-1}(a)\). Set
	\[
	X_a:=N_a/G,\qquad
	Y_a:=A/G\subset X_a.
	\]
	Since \(N_a\) is open and \(A\) is closed, the inclusion
	\[
	i_a:Y_a\hookrightarrow X_a
	\]
	is a closed embedding. Let \(\cC^\bullet_{X_a}\) be the slice-groupoid Cartan sheaf on \(X_a\) associated with the action groupoid \(G\ltimes N_a\).
	
	\begin{definition}[Formal restriction]\label{def-formal-restriction}
		The \emph{formal restriction} of \(\cC^\bullet_{X_a}\) to \(Y_a\) is the complex of sheaves
		\[
		\cC^\bullet_{Y_a\mid X_a}:=i_a^{-1}\cC^\bullet_{X_a}
		\]
		on \(Y_a\). Its stalk at \(p\in Y_a\) is
		\[
		(\cC^\bullet_{Y_a\mid X_a})_p
		=(\cC^\bullet_{X_a})_p
		=\varinjlim_{p\in U\subset X_a}\operatorname{Car}(\pi^{-1}(U),G).
		\]
	\end{definition}
	
	The following lemma is immediate from the definition of the sheaf pullback and the fact that the neighborhoods of \(p\) in \(X_a\) form a filtered system.
	
	\begin{lemma}\label{lem-stalk-formal}
		For every \(p\in Y_a\), the stalk \((\cC^\bullet_{Y_a\mid X_a})_p\) is a differential graded algebra, and the direct limit is filtered. Moreover, for every \(q\),
		\[
		H^q\bigl((\cC^\bullet_{Y_a\mid X_a})_p\bigr)
		\cong \varinjlim_{p\in U\subset X_a}H^q\bigl(\operatorname{Car}(\pi^{-1}(U),G)\bigr).
		\]
	\end{lemma}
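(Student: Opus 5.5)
The argument is purely sheaf-theoretic and uses nothing geometric about \(A\) or \(\mu\). The plan has three steps: identify the stalk, check the algebra structure and filteredness, and then commute cohomology with the colimit.

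\emph{Step 1: the stalk.} For any continuous map \(f:Y\to X\) and sheaf \(\mathcal F\) on \(X\), the inverse image has stalks \((f^{-1}\mathcal F)_p=\mathcal F_{f(p)}\). Applying this degreewise to \(i_a\), and using that the differential of \(i_a^{-1}\cC^\bullet_{X_a}\) is induced stalkwise, identifies the stalk complex of \(\cC^\bullet_{Y_a\mid X_a}\) at \(p\) with \((\cC^\bullet_{X_a})_p\). By Remark~\ref{rem-ordinary-vs-slice} we may work with the ordinary Cartan sheaf representative \(\cC^\bullet_{G,N_a}\). Its stalk is the colimit of \(\operatorname{Car}(\pi^{-1}(U),G)\) over open \(U\ni p\) in \(X_a\), as recalled in \S\ref{subsec-slice-sheaf}. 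This gives the formula in Definition~\ref{def-formal-restriction}.

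\emph{Step 2: algebra structure and filteredness.} Each \(\operatorname{Car}(\pi^{-1}(U),G)=(S(\g^*)\otimes\Omega(\pi^{-1}(U)))^G\) is a commutative dg algebra under the wedge product, with the Cartan differential \(d_G\) acting as a derivation. Restriction along inclusions of invariant open sets is a dga morphism. The index set of open neighborhoods of \(p\), ordered by reverse inclusion, is directed, since \(U\cap U'\) is again a neighborhood of \(p\). Hence the colimit is filtered. A filtered colimit of dgas is computed on underlying sets, and the product and differential descend to it, so the stalk is a dga.

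\emph{Step 3: commuting cohomology with the colimit.} Filtered colimits of real vector spaces are exact. Therefore they commute with taking kernels, images and quotients, so \(H^q\) commutes with the colimit of complexes. Concretely, one checks directly that a class in the colimit is represented by a \(d_G\)-closed form on some \(\pi^{-1}(U)\). Moreover, a form that becomes exact in the colimit is already exact after restriction to some smaller \(U'\). These two facts give the claimed isomorphism of \(H^q\) of the stalk with \(\varinjlim H^q(\operatorname{Car}(\pi^{-1}(U),G))\).

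\emph{Main obstacle.} The only delicate point is the choice of representative of \(\cC^\bullet_{X_a}\). If one insists on the slice-groupoid model, the stalk is instead the colimit of \(\operatorname{Car}(S_U,G_U)\) over slice charts. In that model the dga statement still holds by the same argument. The cohomological formula then transfers through the quasi-isomorphisms \(\operatorname{Red}_{C,U}\) of property (ii) in \S\ref{subsec-slice-sheaf}, since a filtered colimit of quasi-isomorphisms is a quasi-isomorphism by the exactness used in Step 3. This also shows the two descriptions of the stalk agree, consistent with Remark~\ref{rem-ordinary-vs-slice}.
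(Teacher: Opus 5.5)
Your proof is correct and follows essentially the same route as the paper: the stalk is a filtered colimit of Cartan dgas along restriction maps, so product and differential pass to the limit, and exactness of filtered colimits of real vector spaces lets \(H^q\) commute with the colimit. Your Step 1 (stalks of the inverse image) and your remark on transferring through the slice-groupoid representative via the reduction quasi-isomorphisms are elaborations that the paper leaves implicit.
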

	
	\begin{proof}
		The stalk is a filtered direct limit of differential graded algebras; the differential and product pass to the limit. Since filtered direct limits of \(\mathbb R\)-vector spaces are exact, cohomology commutes with the limit. This gives the displayed isomorphism.
	\end{proof}
	
	\subsection{Formal neighborhood equivariant cohomology}\label{subsec-formal-cohomology}
	
	\begin{definition}[Formal neighborhood equivariant cohomology]\label{def-formal-cohomology}
		Let \(N_a\subset M\) be a \(G\)-invariant open neighborhood of \(A=\mu^{-1}(a)\). The \emph{formal neighborhood equivariant cohomology} of \(\mu^{-1}(a)\) relative to \(N_a\) is
		\[
		\cH_G^*(a;N_a)
		:=\mathbb H^*\bigl(Y_a;\,i_a^{-1}\cC^\bullet_{X_a}\bigr).
		\]
	\end{definition}
	
	The definition a priori depends on the choice of \(N_a\). We record the functoriality under inclusions of neighborhoods.
	
	\begin{lemma}\label{lem-functoriality-N}
		Let \(N_a\subset N_a'\) be \(G\)-invariant open neighborhoods of \(A\), with corresponding quotients
		\[
		X_a=N_a/G,\qquad X_a'=N_a'/G.
		\]
		Then there is a natural morphism of complexes of sheaves
		\[
		i^{-1}\cC^\bullet_{X_a}\longrightarrow i'^{-1}\cC^\bullet_{X_a'}
		\]
		on \(Y_a\), induced by the open inclusion \(X_a\hookrightarrow X_a'\). Consequently, there is a natural map
		\[
		\cH_G^*(a;N_a)\longrightarrow \cH_G^*(a;N_a').
		\]
	\end{lemma}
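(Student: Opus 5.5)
Let \(j:X_a\hookrightarrow X_a'\) be the open inclusion, so that \(i'=j\circ i\) as maps \(Y_a\hookrightarrow X_a'\), where \(i=i_a\) and \(i'\) denote the two closed embeddings. The plan is to first compare the two Cartan sheaves on \(X_a\) itself, and then pull back to \(Y_a\). Because \(N_a=\pi'^{-1}(X_a)\) is open and \(G\)-invariant in \(N_a'\), every open \(U\subset X_a\) is also open in \(X_a'\), and its preimages under the two quotient maps coincide: \(\pi^{-1}(U)=\pi'^{-1}(U)\). With the ordinary Cartan sheaves of Remark~\ref{rem-ordinary-vs-slice}, this gives a literal equality of sections,
\[
\Gamma(U,\cC^\bullet_{G,N_a})=\operatorname{Car}(\pi^{-1}(U),G)=\Gamma(U,j^{-1}\cC^\bullet_{G,N_a'}).
\]
These identifications are compatible with restriction maps. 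Hence there is a canonical isomorphism \(\cC^\bullet_{G,N_a}\cong j^{-1}\cC^\bullet_{G,N_a'}=\cC^\bullet_{G,N_a'}|_{X_a}\), induced by \(j\).

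Next I apply \(i^{-1}\) and use \(i^{-1}j^{-1}\cong (j\circ i)^{-1}=i'^{-1}\). This yields a natural morphism, in fact an isomorphism,
\[
i^{-1}\cC^\bullet_{X_a}\longrightarrow i'^{-1}\cC^\bullet_{X_a'}.
\]
As a check, on stalks at \(p\in Y_a\) it is the map of filtered colimits from Definition~\ref{def-formal-restriction}. Neighborhoods of \(p\) contained in \(X_a\) are cofinal among neighborhoods of \(p\) in \(X_a'\), so the map is a bijection on stalks. Taking hypercohomology on \(Y_a\), which is functorial in morphisms of complexes of sheaves, gives the map \(\cH_G^*(a;N_a)\to\cH_G^*(a;N_a')\). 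Naturality under a triple \(N_a\subset N_a'\subset N_a''\) follows from the identity \((j'\circ j)^{-1}=j^{-1}j'^{-1}\).

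The main obstacle is the slice-groupoid model \(\cC^\bullet_{X_a}\), which is not literally the ordinary sheaf. Here I would either use Remark~\ref{rem-ordinary-vs-slice} to work with the ordinary representative throughout, or argue that slice charts of \(G\ltimes N_a\) are restrictions of slice charts of \(G\ltimes N_a'\). In the second approach, the stalk colimits of property (i) agree by cofinality. Compatibility with the quasi-isomorphism \(\cC_{G,N}\to\cC_{\cG_N}\) then follows from the naturality in \cite[Theorem~6]{Huang2026} under open inclusions. Whichever representative is used, the induced map on hypercohomology is the same.

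One point needs care with the direction of the map. The statement writes \(i^{-1}\cC_{X_a}\to i'^{-1}\cC_{X_a'}\), while the restriction of sections naturally goes from the larger neighborhood to the smaller. Since the comparison map is an isomorphism, its inverse also exists. So either direction is natural, and the stated map is well defined.
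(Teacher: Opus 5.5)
Your proposal is correct and follows the same route as the paper: you restrict the Cartan sheaf along the open embedding \(j:X_a\hookrightarrow X_a'\) and then pull back along \(i\), using \(i'=j\circ i\). Your extra observation that \(j^{-1}\cC^\bullet_{X_a'}\to\cC^\bullet_{X_a}\) is a genuine isomorphism, where the paper only claims a stalkwise quasi-isomorphism, is what justifies the direction of the arrow in the statement; the paper's phrase ``composing with the natural map'' leaves that point implicit.
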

	
	\begin{proof}
		The inclusion \(N_a\hookrightarrow N_a'\) is an open \(G\)-equivariant embedding, hence induces an open embedding \(j:X_a\hookrightarrow X_a'\). Restriction of Cartan complexes gives a morphism
		\[
		j^{-1}\cC^\bullet_{X_a'}\longrightarrow \cC^\bullet_{X_a}
		\]
		which is a quasi-isomorphism on stalks by the local nature of the Cartan complex. Composing with the natural map gives the desired morphism. The induced map on hypercohomology is functorial.
	\end{proof}
	
	\begin{remark}\label{rem-direct-limit}
		One may remove the dependence on \(N_a\) by passing to the direct limit
		\[
		\cH_G^*(a):=\varinjlim_{N\supset A}\cH_G^*(a;N),
		\]
		where the limit runs over \(G\)-invariant open neighborhoods of \(A\). Lemma~\ref{lem-functoriality-N} shows that this is a filtered direct limit. When an equivariant tubular neighborhood exists, the maps in this system stabilize and the limit is canonically isomorphic to \(H_G^*(\mu^{-1}(a))\); see Theorem~\ref{thm-tubular-case} below. In the singular case without a global tubular neighborhood, the limit is the natural receptacle for the formal Kirwan map.
	\end{remark}
	
	\begin{proposition}[Hypercohomology via global sections]\label{prop-global-sections}
		Assume that \(Y_a\) is paracompact and that the sheaves \(i_a^{-1}\cC^q_{X_a}\) are fine for all \(q\). Then the hypercohomology of \(i_a^{-1}\cC^\bullet_{X_a}\) is computed by its complex of global sections:
		\[
		\cH_G^q(a;N_a)
		\cong H^q\bigl(\Gamma(Y_a,i_a^{-1}\cC^\bullet_{X_a})\bigr).
		\]
	\end{proposition}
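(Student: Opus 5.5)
The plan is the standard argument that fine sheaves compute hypercohomology, applied to the complex \(\mathcal F^\bullet:=i_a^{-1}\cC^\bullet_{X_a}\) on \(Y_a\). Hypercohomology of a bounded-below complex of sheaves can be computed by any quasi-isomorphic complex of \(\Gamma\)-acyclic sheaves. For example, one can use the total complex of a Cartan--Eilenberg injective resolution, or the Godement resolution. So it suffices to show that \(\mathcal F^\bullet\) is already a complex of \(\Gamma\)-acyclic sheaves.

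Step one is the acyclicity of each \(\mathcal F^q\). Since \(Y_a\) is paracompact (and Hausdorff, being a closed subspace of \(X_a\), which is an open subset of the Hausdorff orbit space \(M/G\)), a fine sheaf on \(Y_a\) is soft. Soft sheaves on a paracompact space satisfy \(H^k(Y_a;\mathcal F^q)=0\) for \(k>0\). This is the classical partition-of-unity argument: the endomorphisms \(\phi_i\) of \(\mathcal F^q\) subordinate to a locally finite cover with \(\sum\phi_i=\id\) give a contracting homotopy on the Čech complex. By hypothesis, each \(\mathcal F^q\) is fine.

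Step two is to compare \(\mathcal F^\bullet\) with a resolution.
\begin{itemize}
\item Choose a resolution \(\mathcal F^\bullet\to\mathcal I^\bullet\), for instance the total complex of the Godement double complex \(\mathcal G^{\bullet}(\mathcal F^\bullet)\). By definition \(\mathbb H^q(Y_a;\mathcal F^\bullet)=H^q(\Gamma(Y_a,\mathcal I^\bullet))\).
\item Consider the double complex \(\Gamma(Y_a,\mathcal G^p(\mathcal F^q))\) and filter it by \(q\).
\item For fixed \(q\), the column \(\Gamma(Y_a,\mathcal G^\bullet(\mathcal F^q))\) has cohomology \(H^p(Y_a;\mathcal F^q)\). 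By Step one this vanishes for \(p>0\) and equals \(\Gamma(Y_a,\mathcal F^q)\) for \(p=0\).
\item The spectral sequence of this filtration therefore has \(E_1^{q,p}=0\) for \(p>0\) and \(E_1^{q,0}=\Gamma(Y_a,\mathcal F^q)\). It degenerates at \(E_2\), giving
\[
\mathbb H^q(Y_a;\mathcal F^\bullet)\cong H^q\bigl(\Gamma(Y_a,\mathcal F^\bullet)\bigr).
\]
\end{itemize}
Convergence holds because the Cartan complexes, and hence \(\mathcal F^\bullet\), are concentrated in nonnegative degrees. The double complex therefore lies in the first quadrant and the spectral sequence is bounded.

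Step three is naturality. The isomorphism is induced by the augmentation \(\Gamma(Y_a,\mathcal F^\bullet)\to\Gamma(Y_a,\mathrm{Tot}\,\mathcal G^\bullet(\mathcal F^\bullet))\). It is therefore compatible with the maps of Lemma~\ref{lem-functoriality-N} whenever the sheaves for both neighborhoods are fine. This matters for later use with the formal Kirwan map.

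None of these steps is hard, because the fineness assumption is built into the hypothesis. The one subtle point I would make sure of is that fineness on \(Y_a\) is really what is assumed. Fineness of \(\cC^q_{X_a}\) on \(X_a\) by \cite[Proposition~8]{Huang2026} does pass to \(i_a^{-1}\): a partition of unity by invariant functions on \(X_a\) restricts to \(Y_a\), and its multiplication operators act on the germs, so \(i_a^{-1}\) of a fine sheaf on a paracompact space restricted to a closed subset is fine. However, the stated hypothesis already covers this, so no further work is needed.
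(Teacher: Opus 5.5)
Your proof is correct and follows the same route as the paper: fine sheaves on the paracompact Hausdorff space \(Y_a\) are soft, soft sheaves are \(\Gamma\)-acyclic, and a bounded-below complex of \(\Gamma\)-acyclic sheaves has its hypercohomology computed by global sections. The paper cites Bredon for that last step, whereas you spell out the Godement double-complex spectral sequence. One small point: the partition-of-unity contraction of the \v{C}ech complex you describe proves acyclicity of \emph{fine} sheaves, since \v{C}ech and sheaf cohomology agree on paracompact Hausdorff spaces; it does not prove acyclicity of soft sheaves, as your sentence says.
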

	
	\begin{proof}
		Fine sheaves on a paracompact Hausdorff space are soft, and soft sheaves are acyclic for the global-section functor; see for instance \cite[Chapter~II]{Bredon1997}. Therefore the hypercohomology of a bounded-below complex of fine sheaves is computed by the cohomology of its global sections.
	\end{proof}
	
	\begin{remark}\label{rem-fine-formal}
		The sheaf \(\cC^q_{X_a}\) is fine by \cite[Proposition~8]{Huang2026}. If \(Y_a\subset X_a\) is closed and \(X_a\) is paracompact, then a partition of unity on \(X_a\) subordinate to a locally finite cover restricts to a partition of unity on \(Y_a\) subordinate to the traces of the cover. Hence \(i_a^{-1}\cC^q_{X_a}\) is fine. Thus Proposition~\ref{prop-global-sections} applies whenever \(M/G\) is paracompact and the standard partition-of-unity hypothesis of \cite[Proposition~8]{Huang2026} holds.
	\end{remark}
	
	\subsection{The formal Kirwan map}\label{subsec-formal-kirwan}
	
	\begin{definition}[Formal Kirwan map]\label{def-formal-kirwan}
		Let \(N_a\subset M\) be a \(G\)-invariant open neighborhood of \(A=\mu^{-1}(a)\). The \emph{formal Kirwan map} is the composition
		\[
		\kappa_{\mathrm{formal}}:
		H_G^*(M)
		\longrightarrow H_G^*(N_a)
		\longrightarrow \mathbb H^*(X_a;\cC^\bullet_{X_a})
		\longrightarrow \mathbb H^*(Y_a;i_a^{-1}\cC^\bullet_{X_a})
		=\cH_G^*(a;N_a),
		\]
		where:
		\begin{enumerate}[label=(\roman*)]
			\item the first map is restriction to the open \(G\)-invariant subset \(N_a\subset M\);
			\item the second is the identification of \(H_G^*(N_a)\) with the hypercohomology of the Cartan sheaf on \(X_a\), using \cite[Theorem~6]{Huang2026};
			\item the third is the natural map induced by the closed embedding \(i_a:Y_a\hookrightarrow X_a\).
		\end{enumerate}
	\end{definition}
	
	\begin{lemma}[Functoriality of the formal Kirwan map]\label{lem-kirwan-functoriality}
		Let \(N_a\subset N_a'\) be \(G\)-invariant open neighborhoods of \(A\). Then the following diagram commutes:
		\[
		\begin{array}{ccc}
			H_G^*(M) & \xrightarrow{\kappa_{\mathrm{formal}}^{N_a}} & \cH_G^*(a;N_a)\\
			\| & & \downarrow\\
			H_G^*(M) & \xrightarrow{\kappa_{\mathrm{formal}}^{N_a'}} & \cH_G^*(a;N_a').
		\end{array}
		\]
		The vertical map on the right is the natural map of Lemma~\ref{lem-functoriality-N}.
	\end{lemma}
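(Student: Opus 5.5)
The plan is to split the square into smaller squares, one for each of the three stages of Definition~\ref{def-formal-kirwan}, and check that each commutes. Write \(j:N_a\hookrightarrow N_a'\) for the open inclusion and \(\bar j:X_a\hookrightarrow X_a'\) for the induced open embedding of quotients. Then \(i_a'=\bar j\circ i_a\) as maps \(Y_a\to X_a'\). The diagram to check is
\[
\begin{array}{ccccccc}
H_G^*(M) & \to & H_G^*(N_a') & \to & \mathbb H^*(X_a';\cC^\bullet_{X_a'}) & \to & \mathbb H^*(Y_a;i_a'^{-1}\cC^\bullet_{X_a'})\\
\| & & \downarrow & & \downarrow & & \downarrow\\
H_G^*(M) & \to & H_G^*(N_a) & \to & \mathbb H^*(X_a;\cC^\bullet_{X_a}) & \to & \mathbb H^*(Y_a;i_a^{-1}\cC^\bullet_{X_a})
\end{array}
\]
In it, the vertical maps are restriction along \(j\), restriction along \(\bar j\) composed with \(\bar j^{-1}\cC^\bullet_{X_a'}\to\cC^\bullet_{X_a}\), and the map induced on \(Y_a\) by \(i_a^{-1}\) of that same sheaf morphism. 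The rightmost vertical arrow runs from \(N_a'\) to \(N_a\), but this is exactly the direction we need. By Lemma~\ref{lem-functoriality-N} and its proof, \(\bar j^{-1}\cC^\bullet_{X_a'}\to\cC^\bullet_{X_a}\) is a quasi-isomorphism. Since \(i_a'^{-1}=i_a^{-1}\bar j^{-1}\), the induced map on \(Y_a\) is a quasi-isomorphism of sheaves. It therefore gives an isomorphism \(\cH_G^*(a;N_a')\cong\cH_G^*(a;N_a)\), and I take the natural map of Lemma~\ref{lem-functoriality-N} to be the inverse of this isomorphism. With that convention, commutativity of the square in the statement is equivalent to commutativity of the outer rectangle above.

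For the left square, restriction of Cartan complexes is transitive: \(\operatorname{Car}(M,G)\to\operatorname{Car}(N_a',G)\to\operatorname{Car}(N_a,G)\) equals \(\operatorname{Car}(M,G)\to\operatorname{Car}(N_a,G)\). So the left square commutes already at the chain level.

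For the middle square, I would work with the ordinary Cartan sheaf representatives of Remark~\ref{rem-ordinary-vs-slice}. There the identification \(H_G^*(N)\cong\mathbb H^*(N/G;\cC_{G,N}^\bullet)\) is the edge map from global sections \(\operatorname{Car}(N,G)=\Gamma(X,\cC^\bullet_{G,N})\). Restriction of global sections along \(\bar j\) is compatible with the natural map \(\Gamma(X_a',\mathcal F)\to\Gamma(X_a,\bar j^{-1}\mathcal F)\), and this compatibility gives the commutation. To pass to the slice-groupoid representatives, I would use the naturality of the comparison quasi-isomorphism \(\cC_{G,N}^\bullet\to\cC^\bullet_{\cG_N}\) of \cite[Theorem~6]{Huang2026} under open equivariant embeddings. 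This naturality holds because the slice reductions \(\operatorname{Red}_{C,U}\) are defined chart by chart, and slice charts of \(N_a\) are slice charts of \(N_a'\).

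The right square is the formal naturality of the pullback map \(\mathbb H^*(X;\mathcal F)\to\mathbb H^*(Y;i^{-1}\mathcal F)\) for the composition \(i_a'=\bar j\circ i_a\) and for the morphism \(\bar j^{-1}\cC^\bullet_{X_a'}\to\cC^\bullet_{X_a}\). This follows from functoriality of the adjunction unit \(\mathcal F\to i_*i^{-1}\mathcal F\) and of injective or Godement resolutions. The main obstacle I expect is the middle square, namely making the naturality of the comparison with the slice-groupoid sheaf precise. To sidestep it, I would first try to argue entirely with the ordinary Cartan sheaf, which Remark~\ref{rem-ordinary-vs-slice} permits, so that every map in the diagram is literally induced by restriction of forms.
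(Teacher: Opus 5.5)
Your proposal is correct and takes the same route as the paper. The paper's proof is a three-line sketch resting on the same three ingredients you use: compatibility of restriction to open invariant subsets, naturality of the Cartan-sheaf identification under open embeddings, and functoriality of the closed-embedding pullback. Your version is more careful than the paper's. You state explicitly that the right-hand vertical map must be the inverse of the restriction-induced map \(i_a'^{-1}\cC^\bullet_{X_a'}\to i_a^{-1}\cC^\bullet_{X_a}\); the paper glosses over this and even writes the restriction chain as \(H_G^*(N_a)\to H_G^*(N_a')\), which is the wrong direction. Working with the ordinary Cartan sheaf, this map is an honest isomorphism, because \(\bar j^{-1}\cC^\bullet_{G,N_a'}=\cC^\bullet_{G,N_a}\) on the nose.
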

	
	\begin{proof}
		The restriction maps \(H_G^*(M)\to H_G^*(N_a)\to H_G^*(N_a')\) are compatible with the open inclusions \(N_a\hookrightarrow N_a'\). The Cartan sheaf identifications are natural with respect to open embeddings, and the closed-embedding pullback is functorial. Hence the diagram commutes.
	\end{proof}
	
	\subsection{Local slice models and stalks}\label{subsec-local-models}
	
	We now describe the stalks of the formal restriction in terms of slice models. Fix \(x\in A=\mu^{-1}(a)\), and let \(H=G_x\). By the Marle--Guillemin--Sternberg local normal form, there exists an \(H\)-invariant symplectic slice \(S_x\) at \(x\) with slice moment map
	\[
	\mu_x:S_x\longrightarrow \mathfrak h^*,
	\]
	and a \(G\)-invariant neighborhood \(U_x\) of the orbit \(G\cdot x\) such that
	\[
	\mu^{-1}(a)\cap U_x\cong G\times_H C_x,\qquad
	C_x:=\mu_x^{-1}(a|_{\mathfrak h}).
	\]
	The quotient model is
	\[
	Y_a\cap (U_x/G)\cong C_x/H.
	\]
	The ambient slice model is \(H\ltimes S_x\).
	
	\begin{proposition}[Local description of the formal stalk]\label{prop-local-stalk}
		For \(p\in Y_a\) represented by \(x\in\mu^{-1}(a)\), the stalk of the formal restriction at \(p\) is quasi-isomorphic to the filtered direct limit of the Cartan complexes of the ambient slices:
		\[
		(\cC^\bullet_{Y_a\mid X_a})_p
		\;\simeq\;
		\varinjlim_{x\in V\subset S_x}\operatorname{Car}(V,H),
		\]
		where the limit runs over \(H\)-invariant open neighborhoods \(V\) of \(C_x\) in \(S_x\), and the quasi-isomorphism is induced by the slice reduction morphisms of \cite[Theorem~3]{Huang2026}.
	\end{proposition}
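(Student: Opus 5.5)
The plan is to pass through the Marle--Guillemin--Sternberg model and then compare two filtered colimits of Cartan complexes on the slice: one over small neighborhoods of \(x\), one over neighborhoods of \(C_x\). I will treat the index set in the statement exactly as worded, namely \(H\)-invariant open \(V\subset S_x\) with \(C_x\subset V\) (so in particular \(x\in V\)). I identify \(S_x\) with an \(H\)-representation, with \(x\) the origin and \(\mu_x\) the homogeneous quadratic moment map shifted by the constant \(a|_{\mathfrak h}\). Consequently \(C_x\) is an \(H\)-invariant cone: it is invariant under \(v\mapsto sv\) for \(s\in[0,1]\).

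\emph{Step 1 (reduction of the stalk to the slice at \(x\)).} By Definition~\ref{def-formal-restriction}, the stalk is \(\varinjlim_{U\ni p}\operatorname{Car}(\pi^{-1}(U),G)\). Inside the normal form \(U_x\cong G\times_H S_x\), the neighborhoods \(U=(G\times_H B)/G\), with \(B\) an \(H\)-invariant ball about \(0\), are cofinal among neighborhoods of \(p\). The slice reduction morphisms \(\operatorname{Car}(G\times_H B,G)\to\operatorname{Car}(B,H)\) are quasi-isomorphisms by \cite[Theorem~3]{Huang2026}, and they are compatible with restriction. Since cohomology commutes with filtered colimits (Lemma~\ref{lem-stalk-formal}), this gives
\[
(\cC^\bullet_{Y_a\mid X_a})_p\simeq \varinjlim_{B}\operatorname{Car}(B,H).
\]
Each \(B\) is \(H\)-equivariantly contractible by the radial homotopy, so each term has cohomology \(S(\mathfrak h^*)^H\), and every restriction map between balls is an isomorphism on cohomology.

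\emph{Step 2 (cofinal star-shaped neighborhoods of \(C_x\)).} Given any \(H\)-invariant open \(V\supset C_x\), set
\[
V'=\{v\in S_x:\ sv\in V\ \text{for all } s\in[0,1]\}.
\]
This set is contained in \(V\). It is \(H\)-invariant because the \(H\)-action is linear. It is open, by compactness of \([0,1]\) and a tube-lemma argument. It contains \(C_x\), because \(C_x\) is a cone contained in \(V\). Finally it is star-shaped about \(0\). Hence star-shaped \(H\)-invariant neighborhoods of \(C_x\) are cofinal in the index system of the statement. For such \(V'\), the radial homotopy \(r_t(v)=(1-t)v\) is \(H\)-equivariant, so the homotopy operator on equivariant forms shows \(H^*(\operatorname{Car}(V',H))\cong S(\mathfrak h^*)^H\). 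Moreover, restriction \(\operatorname{Car}(V',H)\to\operatorname{Car}(B,H)\), for any ball \(B\subset V'\), is a quasi-isomorphism, since both sides retract compatibly onto the origin.

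\emph{Step 3 (comparison).} Every \(V\supset C_x\) contains some ball \(B\), so restriction defines a morphism of filtered colimits
\[
\varinjlim_{V\supset C_x}\operatorname{Car}(V,H)\longrightarrow\varinjlim_{B}\operatorname{Car}(B,H).
\]
By Step 2 and cofinality, this morphism is a quasi-isomorphism, because exactness of filtered colimits reduces the question to the termwise isomorphisms. Composing with the inverse of the quasi-isomorphism of Step 1 gives the claimed quasi-isomorphism, induced by the slice reduction morphisms as stated.

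The main obstacle is Step 2. It rests on \(C_x\) being a genuine cone in the linear slice model, that is, on correctly normalizing \(\mu_x\) so that the constant \(a|_{\mathfrak h}\) is exactly absorbed. If this normalization fails, I would fall back on the local equivariant homotopy retraction to build the cofinal family instead of the radial one.
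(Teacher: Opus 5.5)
Your argument is correct, but it takes a different route from the paper's, and a sounder one. The paper's proof asserts that the quotients $V/H$, with $V$ ranging over $H$-invariant open neighborhoods of $C_x$, form a neighborhood basis of $p$ in $X_a$. It then passes the slice-reduction quasi-isomorphisms $\operatorname{Car}(\pi^{-1}(V/H),G)\to\operatorname{Car}(V,H)$ directly to the colimit, which produces a single chain map.

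As written, that cofinality claim fails as soon as $C_x\neq\{x\}$. For the cone $\{|z|=|w|\}\subset\mathbb C^2$ of Example~\ref{ex-cone}, every such $V/H$ contains all of $C_x/H$, so these sets never shrink to $p$. If the limit is instead read over neighborhoods of $x$, as the subscript $x\in V$ suggests, the paper's argument is exactly your Step~1.

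Your proof keeps the two index systems apart. Step~1 computes the genuine stalk over small balls at $x$. Steps~2--3 compare it with the colimit over neighborhoods of $C_x$, using the cofinal star-shaped family and the radial contraction. What you obtain is a zigzag (slice reduction forward, restriction to balls backward) rather than one chain map, and that is all ``quasi-isomorphic'' requires. The gain is that your route shows where the content lies: the proposition holds because $C_x$ is star-shaped, so both colimits have cohomology $S(\mathfrak h^*)^H$. The paper's shortcut hides this.

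Your normalization worry can be dropped. The Marle--Guillemin--Sternberg model is exact: on the symplectic slice the moment map is $a|_{\mathfrak h}$ plus the homogeneous quadratic moment map of the linear $H$-action. Hence $C_x$ is a genuine cone, possibly cut off by a ball if the model only holds there, which is still star-shaped.

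The fallback you mention would not have sufficed anyway. This proposition does not assume the local retraction condition. Even under that condition, the colimit would have cohomology $H_H^*(C_x)$, which agrees with the stalk only because $C_x$ is equivariantly contractible.

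Finally, like the paper, you write the normal form as $G\times_H S_x$. Strictly it is $G\times_H(\mathfrak m^*\times S_x)$ with $\mathfrak g=\mathfrak h\oplus\mathfrak m$. So in Step~3 the map out of $\operatorname{Car}(V,H)$ should first pull back along the $H$-equivariant projection $\mathfrak m^*\times S_x\to S_x$; nothing else changes.
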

	
	\begin{proof}
		By the slice theorem, a neighborhood basis of \(p\) in \(X_a\) is provided by the quotients \(V/H\), where \(V\subset S_x\) runs over \(H\)-invariant open neighborhoods of \(C_x\). For each such \(V\), the slice reduction morphism
		\[
		\operatorname{Red}_{C,V}:\operatorname{Car}(\pi^{-1}(V/H),G)
		\longrightarrow \operatorname{Car}(V,H)
		\]
		is a quasi-isomorphism by \cite[Theorem~3]{Huang2026}. Passing to the filtered direct limit over \(V\) and using Lemma~\ref{lem-stalk-formal} and the exactness of filtered direct limits gives the claimed quasi-isomorphism.
	\end{proof}
	
	\begin{corollary}\label{cor-local-cohomology}
		For \(p\in Y_a\) represented by \(x\in\mu^{-1}(a)\), the cohomology of the stalk is
		\[
		H^q\bigl((\cC^\bullet_{Y_a\mid X_a})_p\bigr)
		\cong
		\varinjlim_{x\in V\subset S_x}H^q\bigl(\operatorname{Car}(V,H)\bigr).
		\]
		If \(C_x\) admits an \(H\)-equivariant open neighborhood \(V\) and an \(H\)-equivariant deformation retraction \(V\to C_x\), then
		\[
		H^q\bigl((\cC^\bullet_{Y_a\mid X_a})_p\bigr)
		\cong H_H^q(C_x).
		\]
	\end{corollary}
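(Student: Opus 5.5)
The first formula is the stalk computation of Proposition~\ref{prop-local-stalk}, once the exactness of filtered colimits is taken into account. The second formula follows by showing that the directed system over \(V\) is eventually constant in cohomology, with value \(H_H^q(C_x)\), which can be computed on the retracting neighborhood.

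For the first formula, I apply \(H^q\) to the quasi-isomorphism of Proposition~\ref{prop-local-stalk}. A quasi-isomorphism induces isomorphisms on cohomology, so \(H^q\) of the stalk equals \(H^q\bigl(\varinjlim_V \operatorname{Car}(V,H)\bigr)\). The index set of \(H\)-invariant open neighborhoods \(V\) of \(C_x\) is filtered, since finite intersections are again such neighborhoods. Filtered direct limits of \(\mathbb R\)-vector spaces are exact, so \(H^q\) commutes with \(\varinjlim\), exactly as in Lemma~\ref{lem-stalk-formal}. This gives the displayed isomorphism.

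For the second formula, suppose \(V_0\) is an \(H\)-invariant neighborhood of \(C_x\) with an \(H\)-equivariant deformation retraction onto \(C_x\). The inclusion \(C_x\hookrightarrow V_0\) is then an \(H\)-homotopy equivalence, so \(H_H^q(V_0)\cong H_H^q(C_x)\) in Borel cohomology. Since \(V_0\) is an open \(H\)-manifold and \(H\) is compact, the equivariant de Rham theorem gives \(H^q(\operatorname{Car}(V_0,H))\cong H_H^q(V_0)\). The remaining task is to show that the colimit over \(V\) is already attained at \(V_0\), i.e.\ that the restriction maps \(H_H^q(V_0)\to H_H^q(V)\) become isomorphisms on a cofinal family of \(V\subset V_0\). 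Each composite \(H_H^q(V_0)\to H_H^q(V)\to H_H^q(C_x)\) is the isomorphism above, so every restriction \(H_H^q(V_0)\to H_H^q(V)\) is injective. Hence \(H_H^q(V_0)\) injects into the colimit, and the remaining issue is surjectivity.

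Surjectivity is the main obstacle. A retraction of \(V_0\) need not keep a smaller neighborhood \(V\) inside itself, so the maps for smaller \(V\) are not obviously isomorphisms. My first approach would be to produce a cofinal family of neighborhoods that are themselves \(H\)-equivariant deformation retracts onto \(C_x\). For instance, I would take \(V_\epsilon=\{v\in V_0:\ d(v,r_1(v))<\epsilon\}\) for an \(H\)-invariant metric \(d\) (or \(\{|\mu_x-a|_{\mathfrak h}|<\epsilon\}\) in the cone-like case). I would then check that the trajectories \(r_t\) can be reparametrized to stay inside \(V_\epsilon\), or else use the flow of the gradient-like homotopy. If that fails, the fallback is a tautness argument. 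Because \(C_x\) is closed in \(S_x\) and \(S_x/H\) is paracompact, the colimit of the \(H^q(\operatorname{Car}(V,H))\) over neighborhoods computes the cohomology of \(C_x/H\) with coefficients in the restricted equivariant sheaf. This is the continuity property of sheaf cohomology for closed subsets of paracompact spaces (Bredon, Chapter~II). That restricted cohomology equals \(H_H^q(C_x)\) once \(C_x\) is an \(H\)-neighborhood retract, which is exactly what the hypothesis gives. Either route would show that the colimit equals \(H_H^q(C_x)\), completing the corollary.
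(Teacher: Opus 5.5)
Your treatment of the first formula is the paper's own argument: Proposition~\ref{prop-local-stalk} plus exactness of filtered colimits, as in Lemma~\ref{lem-stalk-formal}. For the second formula the paper simply applies Lemma~\ref{lem-equivariant-homotopy} to the single retracting neighborhood and never addresses the colimit. You are right that more is needed, and your injectivity argument through the composite $H_H^q(V_0)\to H_H^q(V)\to H_H^q(C_x)$ is correct.

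However, you never prove surjectivity of $H_H^q(V_0)\to\varinjlim_V H_H^q(V)$; you propose two routes and complete neither. In the first route, the sets $V_\epsilon$ need not be cofinal when $C_x$ is non-compact, since a neighborhood of a non-compact closed set can become arbitrarily thin away from compact subsets. Nothing forces $r_t(V_\epsilon)\subset V_\epsilon$ either. In the tautness route, tautness only rewrites the colimit as the cohomology of $C_x/H$ with coefficients in the restricted sheaf. Identifying that group with $H_H^q(C_x)$ is then equivalent to the statement you are proving, and ``once $C_x$ is an $H$-neighborhood retract'' is an assertion, not an argument.

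The missing idea is that you do not need smaller neighborhoods which retract onto $C_x$ inside themselves. You only need the homotopy starting from a smaller set to stay inside $V$. Given an $H$-invariant open $V$ with $C_x\subset V\subset V_0$, set $W:=\{v\in V_0:\ r_t(v)\in V\text{ for all }t\in[0,1]\}$.
\begin{itemize}
\item $W$ is open: its complement in $V_0$ is the projection of the closed set $r^{-1}(V_0\setminus V)\subset V_0\times[0,1]$ along the compact factor.
\item $W$ is $H$-invariant by equivariance of $r$.
\item $W$ contains $C_x$ because $r_t$ fixes $C_x$ pointwise.
\item $W$ lies in $V$ because $r_0=\id$.
\end{itemize}
The restricted homotopy $W\times[0,1]\to V$ joins the inclusion $W\hookrightarrow V$ to $r_1|_W$, and $r_1|_W$ factors as $W\hookrightarrow V_0\xrightarrow{\,r_1\,}C_x\subset V$. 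Hence every $\alpha\in H_H^q(V)$ satisfies $\alpha|_W=(r_1^*\alpha)|_W$ with $r_1^*\alpha\in H_H^q(V_0)$. This is exactly the surjectivity onto the colimit. Combined with your injectivity it gives $\varinjlim_V H_H^q(V)\cong H_H^q(V_0)\cong H_H^q(C_x)$.

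Run this argument in Borel cohomology, transported through the natural isomorphisms $H^q(\operatorname{Car}(V,H))\cong H_H^q(V)$ for open $V$. A retraction onto a singular $C_x$ is in general only continuous, so it cannot be used to pull back Cartan forms.
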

	
	\begin{proof}
		The first assertion follows from Proposition~\ref{prop-local-stalk} and Lemma~\ref{lem-stalk-formal}. For the second, an \(H\)-equivariant deformation retraction induces an \(H\)-equivariant homotopy equivalence \(V\simeq C_x\), and equivariant cohomology is homotopy invariant; see Lemma~\ref{lem-equivariant-homotopy} below.
	\end{proof}
	
	\begin{lemma}[Equivariant homotopy invariance]\label{lem-equivariant-homotopy}
		Let \(H\) be a compact Lie group, and let \(V,W\) be smooth \(H\)-manifolds. If \(f:V\to W\) is an \(H\)-equivariant homotopy equivalence, then the pullback
		\[
		f^*:\operatorname{Car}(W,H)\longrightarrow \operatorname{Car}(V,H)
		\]
		is a quasi-isomorphism.
	\end{lemma}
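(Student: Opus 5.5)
The strategy is to reduce the statement to the standard fact that homotopic equivariant maps induce chain-homotopic maps on Cartan complexes.

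\emph{Step 1: the homotopy formula.} Let \(F:V\times[0,1]\to W\) be an \(H\)-equivariant smooth homotopy between \(f_0\) and \(f_1\), with \(H\) acting trivially on \([0,1]\). Define
\[
K:\operatorname{Car}(W,H)\longrightarrow \operatorname{Car}(V,H)[-1],\qquad K\alpha=\int_{[0,1]}F^*\alpha .
\]
Here \(\alpha\in(S(\mathfrak h^*)\otimes\Omega(W))^H\), and fiber integration acts only on the form factor, leaving the polynomial variable \(\xi\in\mathfrak h\) alone.

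Then I would verify the Cartan homotopy formula
\[
f_1^*-f_0^*=d_HK+Kd_H, \qquad d_H=d-\iota_{\xi}.
\]
The \(d\)-part is the usual Stokes formula for fiber integration over \([0,1]\). For the \(\iota_\xi\)-part, note that the generating vector field of \(\xi\) on \(V\times[0,1]\) is tangent to the \(V\)-factor, because the action on \([0,1]\) is trivial. Hence contraction commutes with \(\int_{[0,1]}\) up to the sign dictated by the degree shift. Equivariance of \(F\) gives \(F^*\iota_{\xi_W}=\iota_{\xi_{V\times I}}F^*\). Finally, \(H\)-invariance of \(K\alpha\) follows from \(F\) being equivariant and fiber integration being natural.

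\emph{Step 2: the quasi-isomorphism.} Let \(g:W\to V\) be an equivariant homotopy inverse of \(f\). By Step 1, \((g\circ f)^*=f^*g^*\) is chain homotopic to the identity on \(\operatorname{Car}(V,H)\). Likewise \(g^*f^*\) is chain homotopic to the identity on \(\operatorname{Car}(W,H)\). So \(f^*\) is a chain homotopy equivalence, and in particular a quasi-isomorphism.

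\emph{Main obstacle: smoothness of the homotopies.} Step 1 needs \(F\) smooth, but equivariant homotopy equivalences are a priori only continuous. I would handle this in two parts.

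First, I would smooth the maps \(f,g\) and the homotopies. Use an \(H\)-equivariant smooth approximation: approximate by a smooth map, then average over \(H\) using Haar measure along geodesic convex combinations in a complete invariant metric, or via an equivariant embedding into a representation followed by a tubular retraction. Nearby smooth equivariant maps are equivariantly homotopic, so the homotopy relations are preserved.

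Second, if one prefers to avoid smoothing, an alternative is to appeal to the de Rham-type theorem identifying \(H^*(\operatorname{Car}(V,H))\) with Borel cohomology \(H^*(EH\times_H V;\mathbb R)\), naturally in \(V\). Borel cohomology is homotopy invariant under continuous equivariant homotopy equivalences, because \(EH\times_H f\) is then a homotopy equivalence. I would present the smoothing route as the main proof and mention the Cartan-theorem route as a remark.

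For the application in Corollary~\ref{cor-local-cohomology}, where \(C_x\) may be singular, I would interpret \(\operatorname{Car}(C_x,H)\) via this Borel identification.
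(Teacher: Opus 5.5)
Your proposal is correct and takes essentially the same route as the paper, which just invokes the standard fact (citing Guillemin--Sternberg, Chapter~4) that an \(H\)-equivariant homotopy induces a chain homotopy between the pullbacks on Cartan complexes, so \(f^*\) is a chain homotopy equivalence. Your fiber-integration operator \(K=\int_{[0,1]}F^*\) is exactly that chain homotopy written out, and your handling of smoothness (equivariant smoothing of \(g\) and the homotopies, or comparison with Borel cohomology) fills in a point the paper leaves implicit.
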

	
	\begin{proof}
		This is the standard homotopy invariance of the Cartan model. An \(H\)-equivariant homotopy \(f_t:V\to W\) induces a chain homotopy between \(f_0^*\) and \(f_1^*\) on \(\operatorname{Car}(W,H)\); see \cite[Chapter~4]{GuilleminSternberg1999}. Hence equivariant homotopy equivalences induce isomorphisms on cohomology.
	\end{proof}
	
	\subsection{Fine and acyclic properties}\label{subsec-fine}
	
	\begin{proposition}[Fineness of the formal restriction]\label{prop-fine-formal}
		Assume that \(X_a\) is paracompact and that the partition-of-unity hypothesis of \cite[Proposition~8]{Huang2026} holds. Then for every \(q\), the sheaf \(i_a^{-1}\cC^q_{X_a}\) on \(Y_a\) is fine. Consequently,
		\[
		\cH_G^q(a;N_a)\cong H^q\bigl(\Gamma(Y_a,i_a^{-1}\cC^\bullet_{X_a})\bigr).
		\]
	\end{proposition}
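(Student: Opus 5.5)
The plan has two parts: first show that each sheaf \(i_a^{-1}\cC^q_{X_a}\) is fine, then obtain the displayed isomorphism directly from Proposition~\ref{prop-global-sections}.

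\textbf{Topology of \(Y_a\).} Since \(A\) is closed in \(N_a\), the set \(Y_a=A/G\) is closed in \(X_a\). A closed subspace of a paracompact Hausdorff space is again paracompact Hausdorff. Hausdorffness of \(X_a\) is automatic, because \(G\) is compact and acts on the manifold \(N_a\).

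\textbf{Fineness of the restricted sheaves.} I will use the characterization of a fine sheaf as a module over a soft sheaf of rings \(\mathcal R\) with unit, or equivalently as a sheaf admitting partitions of unity in \(\mathcal{E}nd\). By \cite[Proposition~8]{Huang2026}, the sheaf \(\cC^q_{X_a}\) is a module over the sheaf \(\mathcal R_{X_a}\) of continuous functions on \(X_a\) that pull back to smooth \(G\)-invariant functions. Under the partition-of-unity hypothesis, \(\mathcal R_{X_a}\) is soft. Pullback along \(i_a\) then gives the following:
\begin{enumerate}[label=(\alph*)]
\item \(i_a^{-1}\cC^q_{X_a}\) is a module over \(i_a^{-1}\mathcal R_{X_a}\), since the module structure passes to stalks and hence to \(i_a^{-1}\).
\item \(i_a^{-1}\mathcal R_{X_a}\) is soft on \(Y_a\). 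This is because the restriction of a soft sheaf to a closed subspace of a paracompact space is soft (Bredon, Chapter~II). Concretely, a section over a closed subset \(K\subset Y_a\) is also a section over the closed subset \(K\subset X_a\), so it extends to \(X_a\) and then restricts back to \(Y_a\).
\end{enumerate}
Together, (a) and (b) make \(i_a^{-1}\cC^q_{X_a}\) a module over a soft sheaf of unital rings on a paracompact space, and such a module is fine.

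An alternative to (b) is the explicit argument of Remark~\ref{rem-fine-formal}. Take a locally finite open cover \(\{W_\alpha\cap Y_a\}\) of \(Y_a\) and extend it to a locally finite cover of \(X_a\) by adjoining \(X_a\setminus Y_a\). Choose an invariant partition of unity \(\{\rho_\alpha\}\) subordinate to it. Then the operators of multiplication by \(\rho_\alpha\) on \(\cC^q_{X_a}\) descend to endomorphisms of \(i_a^{-1}\cC^q_{X_a}\). Their supports \(\operatorname{supp}\rho_\alpha\cap Y_a\) lie in \(W_\alpha\cap Y_a\), they form a locally finite family, and they sum to the identity on \(Y_a\).

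\textbf{Conclusion.} With fineness established, Proposition~\ref{prop-global-sections} applies, since \(i_a^{-1}\cC^\bullet_{X_a}\) is bounded below (it sits in nonnegative degrees). This yields
\[
\cH_G^q(a;N_a)\cong H^q\bigl(\Gamma(Y_a,i_a^{-1}\cC^\bullet_{X_a})\bigr).
\]

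The main obstacle is the subtle step in the fineness argument. Sections of \(i_a^{-1}\) are germs along \(Y_a\), not honest sections over \(Y_a\). I must therefore check that multiplication by \(\rho_\alpha\) is well defined on germs, which holds because it is a sheaf morphism on \(X_a\). I must also check that the support condition refers to the support in \(Y_a\), which holds because stalks of \(i_a^{-1}\) at \(p\in Y_a\) coincide with stalks of \(\cC^q_{X_a}\) at \(p\).
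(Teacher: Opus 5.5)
Your proof is correct, but your main argument takes a different route from the paper's.

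\textbf{The paper's route.} It argues directly with partitions of unity. It takes an invariant partition of unity \(\{\rho_\alpha\}\) subordinate to a locally finite cover \(\{O_\alpha\}\) of \(X_a\) and restricts it to \(Y_a\). Multiplication by \(\rho_\alpha|_{Y_a}\) then supplies the required endomorphisms of \(i_a^{-1}\cC^q_{X_a}\).

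\textbf{Your main route.} You reduce fineness to two general facts: softness passes to restrictions on closed subspaces, and a module over a soft sheaf of unital rings on a paracompact space is fine. You apply these to \(i_a^{-1}\mathcal R_{X_a}\). This avoids constructing partitions of unity on \(Y_a\), and it avoids the germ bookkeeping for \(i_a^{-1}\) entirely.

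The cost is that you must know the ring \(\mathcal R_{X_a}\) acts on \(\cC^q_{X_a}\) and is soft. That is more than the bare fineness the paper quotes from \cite[Proposition~8]{Huang2026}, although the paper's multiplication by \(\rho_\alpha\) uses the same action implicitly. You can remove this extra input by taking the ring to be \(\mathcal{E}nd(\cC^q_{X_a})\). It is soft exactly because \(\cC^q_{X_a}\) is fine, and \(i_a^{-1}\cC^q_{X_a}\) is a module over the soft unital sheaf \(i_a^{-1}\mathcal{E}nd(\cC^q_{X_a})\). This runs your argument on precisely the cited input.

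\textbf{Your alternative route.} This is the paper's argument, stated more carefully. The paper only produces partitions of unity subordinate to traces \(\{O_\alpha\cap Y_a\}\) of covers of \(X_a\). You start from an arbitrary cover \(\{W_\alpha\cap Y_a\}\) of \(Y_a\) and adjoin the open set \(X_a\setminus Y_a\), whose partition function vanishes on \(Y_a\). That is what fineness on \(Y_a\) actually requires, so your version is the more complete one.

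One small inaccuracy: the extended cover need not be locally finite for an arbitrary choice of the \(W_\alpha\). This is harmless, because paracompactness of \(X_a\) gives a subordinate partition of unity with locally finite supports for any open cover. Your supports \(\operatorname{supp}\rho_\alpha\cap Y_a\subset W_\alpha\cap Y_a\) are then locally finite on \(Y_a\), as you need.
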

	
	\begin{proof}
		By \cite[Proposition~8]{Huang2026}, each \(\cC^q_{X_a}\) is fine. Let \(\{O_\alpha\}\) be a locally finite open cover of \(X_a\) and let \(\{\rho_\alpha\}\) be a subordinate partition of unity. The restrictions \(\rho_\alpha|_{Y_a}\) form a partition of unity on \(Y_a\) subordinate to the cover \(\{O_\alpha\cap Y_a\}\). Multiplication by \(\rho_\alpha|_{Y_a}\) defines a sheaf endomorphism of \(i_a^{-1}\cC^q_{X_a}\) with support in \(O_\alpha\cap Y_a\), and the sum is the identity. Hence \(i_a^{-1}\cC^q_{X_a}\) is fine. The hypercohomology statement follows from Proposition~\ref{prop-global-sections}.
	\end{proof}
	
	\subsection{Equivariant tubular neighborhood of a singular level set}\label{subsec-tubular}
	
	We now prove the geometric input which allows us to identify the formal cohomology with the Borel equivariant cohomology in many singular situations.
	
	\begin{theorem}[Equivariant tubular neighborhood of a singular level set]\label{thm-singular-tubular}
		Let \(G\) be a compact Lie group, let \((M,\omega)\) be a smooth symplectic manifold, and suppose that \(G\) acts on \(M\) in a Hamiltonian fashion with moment map
		\[
		\mu:M\longrightarrow \mathfrak g^*,
		\]
		satisfying
		\[
		\mu(g\cdot x)=\Ad^*_{g^{-1}}\mu(x),
		\qquad g\in G,\ x\in M.
		\]
		Fix \(a\in\mathfrak g^*\) and set
		\[
		A:=\mu^{-1}(a).
		\]
		Assume that \(A\) is \(G\)-invariant, i.e. \(G\cdot A=A\). The value \(a\) may be singular, so \(A\) need not be a smooth submanifold.
		
		Assume that one of the following conditions holds:
		\begin{enumerate}[label=(C\arabic*)]
			\item \(M,\omega\), the \(G\)-action, and \(\mu\) are real analytic, and \(\mu:M\to\mathfrak g^*\) is a proper map;
			\item the closed subset \(A=\mu^{-1}(a)\subset M\) admits a \(G\)-invariant Whitney stratification which is preserved by the \(G\)-action.
		\end{enumerate}
		Then there exist a \(G\)-invariant open neighborhood \(U\subset M\) of \(A\) and a \(G\)-equivariant strong deformation retraction
		\[
		r:U\times[0,1]\longrightarrow U
		\]
		such that
		\[
		r_0=\id_U,\qquad
		r_1(U)=A,\qquad
		r_t|_A=\id_A\quad (t\in[0,1]).
		\]
		Consequently, \(A\) is a strong deformation retract of \(U\), and in particular
		\[
		U\simeq_G A=\mu^{-1}(a).
		\]
		If \(a\) is a regular value of \(\mu\), then \(A\) is a closed submanifold, and the ordinary equivariant tubular neighborhood theorem already suffices; no additional assumption (C1) or (C2) is needed.
	\end{theorem}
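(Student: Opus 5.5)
The plan is to reduce both cases to the existence of a $G$-invariant \emph{controlled} neighborhood of $A$ that retracts onto $A$, and then make the retraction equivariant by averaging or by constructing it on the quotient level. The natural route is to first produce some (possibly non-equivariant) neighborhood deformation retraction using the stratified structure, and then replace all auxiliary data (tubular neighborhoods of strata, tubular functions, control vector fields) by $G$-invariant versions. Averaging over the compact group $G$ with Haar measure works well for metrics, tube distance functions $\rho_S$ and vector fields. It does not directly work for retractions, which take values in a nonlinear space. So I will average the \emph{input data} rather than the retraction itself.

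\textbf{Case (C2).} Take the given $G$-invariant Whitney stratification $\{S\}$ of $A$ and extend it to a Whitney stratification of a neighborhood by adjoining the open stratum $M\setminus A$ near $A$. Next choose a $G$-invariant Riemannian metric on $M$; its exponential maps give $G$-equivariant tubular neighborhoods $T_S$ with projections $\pi_S$ and distance functions $\rho_S$. Following Mather's construction of control data, with every choice made invariantly (by induction on strata, shrinking by $G$-invariant functions), we obtain $G$-invariant control data satisfying the compatibility relations $\pi_S\pi_R=\pi_S$ and $\rho_S\pi_R=\rho_S$. Applying Goresky's or Mather's argument then yields a neighborhood $U$ and a retraction $U\to A$ by integrating a controlled vector field that is radial in each tube. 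Because the vector field is built by gluing invariant local pieces with a $G$-invariant partition of unity, it is $G$-invariant. Its flow, together with the final projection, is therefore $G$-equivariant, which gives $r_t$. An alternative is Lojasiewicz's argument via a semialgebraic-type triangulation: triangulate $M/G$ compatibly with $A/G$ and the orbit-type stratification, take the regular neighborhood retraction downstairs, and lift it. However, lifting maps from $M/G$ to $M$ is not automatic, so I prefer the controlled-vector-field approach.

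\textbf{Case (C1).} Here I reduce to (C2). Since $\mu$ is real analytic, $A$ is an analytic subset and admits a Whitney stratification (Lojasiewicz, Hironaka). Properness of $\mu$ makes $A$ compact, so finitely many strata and charts suffice, and one can take the canonical (minimal) Whitney stratification. Since $G$ acts by analytic diffeomorphisms preserving $A$, each $g$ maps the canonical stratification to a Whitney stratification of $A$. By uniqueness of the coarsest Whitney stratification, the strata are $G$-invariant. Hence (C2) holds and the previous case applies. The regular-value statement is the standard equivariant tubular neighborhood theorem: use the normal exponential map of an invariant metric and the linear radial retraction.

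\textbf{Main obstacle.} The hardest step is ensuring that the retraction is \emph{strong}, meaning $r_t|_A=\id$ for all $t$, and continuous at $A$, while keeping it equivariant. In Mather's construction the retraction near a stratum uses the flow to the boundary of tubes and the projections $\pi_S$, and continuity at lower strata relies on the control conditions. I will first verify carefully that $G$-invariant control data exist, by averaging $\rho_S$ and using equivariant exponential maps, so that the control identities hold exactly rather than after averaging. I will also check that the controlled lift of the radial field commutes with the $G$-action, since the lift is unique once the control conditions are fixed.
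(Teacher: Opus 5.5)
Your proposal is correct in outline. At the top level it follows the paper's strategy: obtain a $G$-invariant Whitney stratification of $A$, then use Thom--Mather theory to retract an invariant neighborhood onto $A$. However, you carry out both key steps differently, and in each case your version is sounder.

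\textbf{The invariant stratification under (C1).} The paper works orbit by orbit through the Marle--Guillemin--Sternberg model. It notes that the slice level sets $C_x$ are analytic and then asserts that ``combining these local models with the stratification by orbit types'' makes $A$ a $G$-invariant Whitney stratified set. It gives no argument for why local stratifications of the $C_x$ glue compatibly and invariantly. You bypass the slice model entirely. You take the canonical Whitney stratification of the compact analytic set $A$; because it is intrinsically defined, every analytic automorphism preserving $A$ preserves it. This reduces (C1) to (C2) cleanly.

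\textbf{Equivariance of the retraction.} The paper glues stratumwise tubes with an invariant partition of unity and says that ``averaging over $G$ makes the deformation retraction $G$-equivariant.'' As you point out, a retraction takes values in the nonlinear set $A$ and cannot be averaged. Your approach is the correct way to realize what the paper gestures at:
\begin{itemize}
\item average only linear data (metric, tubular functions, vector fields);
\item obtain equivariant projections $\pi_S$ from normal exponential maps of an invariant metric;
\item check that averaging $\rho_S$ preserves $\rho_S\circ\pi_R=\rho_S$ once the $\pi_R$ are equivariant.
\end{itemize}
The cost is that you must genuinely verify two things:
\begin{itemize}
\item invariant control data exist with the exact identities $\pi_S\pi_R=\pi_S$ and $\rho_S\pi_R=\rho_S$;
\item the radial controlled flow has a limit map that is continuous up to $A$. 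This needs a bound on $|v|$ comparable to the distance to $A$, so that flow lines starting near $a\in A$ have length tending to $0$.
\end{itemize}
The paper hides both behind a citation, so your account is at least as complete as the paper's.

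\textbf{A justification to fix.} Controlled lifts are not unique: Mather's lifting involves partition-of-unity choices. So $G$-invariance of the lift cannot follow from uniqueness. Use averaging instead. Once $\pi_S$ is equivariant and $\rho_S$ is invariant, the control conditions $(\pi_S)_*v=w\circ\pi_S$ and $v(\rho_S)=0$, or their radial analogues, are affine in $v$ and stable under the $G$-action. Hence the Haar average of any controlled lift of an invariant field is again a controlled lift, now $G$-invariant.
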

	
	\begin{proof}
		We prove the theorem in three steps.
		
		\paragraph{Step 1: Local slice model.}
		Take any \(x\in A\), and let
		\[
		H:=G_x=\{g\in G:g\cdot x=x\}
		\]
		be the isotropy group of \(x\). Let \(\mathcal O:=G\cdot x\) be the orbit through \(x\). By the slice theorem for compact Lie group actions, there exists an \(H\)-invariant normal slice representation space \(V_x\), together with a \(G\)-equivariant diffeomorphism
		\[
		G\times_H V_x\longrightarrow U_x,
		\]
		where \(U_x\) is a \(G\)-invariant open neighborhood of the orbit \(\mathcal O=Gx\) in \(M\). In these local coordinates, \(x\) corresponds to \([e,0]\), and the moment map can be written as
		\[
		\mu([g,v])
		=
		g\cdot\bigl(a+\mu_H(v)+O(\|v\|^2)\bigr),
		\]
		where \(\mu_H\) is the moment map for the linear symplectic action of \(H\) on \(V_x\). Hence
		\[
		A\cap U_x
		=
		\mu^{-1}(a)\cap U_x
		\cong
		G\times_H C_x,
		\]
		where
		\[
		C_x
		:=
		\{\,v\in V_x:\mu_H(v)+O(\|v\|^2)=0\,\}.
		\]
		Under condition (C1), \(C_x\) is a real analytic set. Under condition (C2), \(A\) is already assumed to admit a \(G\)-invariant Whitney stratification. Combining these local models with the stratification by orbit types, we conclude that \(A=\mu^{-1}(a)\) is a closed \(G\)-invariant Whitney stratified subspace of \(M\). If \(a\) is a regular value, then \(C_x\) is a linear subspace near \(0\), and \(A\) itself is a closed submanifold.
		
		\paragraph{Step 2: Equivariant stratified tubular neighborhood theorem.}
		Apply the equivariant form of the Thom--Mather tubular neighborhood theorem to the closed \(G\)-invariant Whitney stratified subspace
		\[
		A=\mu^{-1}(a)\subset M.
		\]
		Thus there exist a \(G\)-invariant open neighborhood \(U\subset M\) of \(A\) and a \(G\)-equivariant strong deformation retraction
		\[
		r:U\times[0,1]\longrightarrow U
		\]
		satisfying
		\[
		r_0=\id_U,\qquad
		r_1(U)=A,\qquad
		r_t|_A=\id_A\quad (t\in[0,1]).
		\]
		Intuitively, \(U\) is an equivariant tubular neighborhood of \(A\). It is constructed by taking normal tubular neighborhoods on each stratum and then gluing them using a \(G\)-invariant partition of unity, exploiting the local finiteness of the stratification and the compactness of \(G\); averaging over \(G\) makes the deformation retraction \(G\)-equivariant.
		
		\paragraph{Step 3: Homotopy equivalence.}
		Let \(i:A\hookrightarrow U\) be the inclusion map, and let \(p:=r_1:U\to A\). From
		\[
		p\circ i=r_1|_A=\id_A
		\]
		and the fact that
		\[
		i\circ p=r_1
		\]
		is homotopic to \(r_0=\id_U\) via \(r_t:U\to U\), we see that \(i\) and \(p\) are homotopy inverses of each other. Therefore \(A\simeq U\). Since \(U\) and the deformation retraction can be chosen to be \(G\)-equivariant, we actually have
		\[
		U\simeq_G \mu^{-1}(a).
		\]
	\end{proof}
	
	\begin{remark}\label{rem-tubular-hypothesis}
		If one assumes only that \(A=\mu^{-1}(a)\) is a \(G\)-invariant closed subset, the conclusion generally fails; some regularity or stratifiability assumption such as (C1) or (C2) is necessary. When \(a\) is a singular value, \(\mu^{-1}(a)\) may have singularities, but the equivariant Whitney stratified tubular neighborhood theorem still provides a strong deformation retraction of a neighborhood onto it.
	\end{remark}
	
	\subsection{The equivariant tubular neighborhood case}\label{subsec-regular-smooth}
	
	We now show that when an equivariant tubular neighborhood exists, the formal cohomology reduces to the ordinary Borel equivariant cohomology, and the formal Kirwan map is the classical Kirwan map.
	
	\begin{theorem}[Equivariant tubular neighborhood case]\label{thm-tubular-case}
		Assume that there exists a \(G\)-invariant open neighborhood \(N_a\subset M\) of \(A=\mu^{-1}(a)\) and a \(G\)-equivariant strong deformation retraction
		\[
		r:N_a\times[0,1]\longrightarrow N_a
		\]
		with \(r_0=\id_{N_a}\), \(r_1(N_a)\subset A\), and \(r_t|_A=\id_A\). Then there is a quasi-isomorphism of complexes of sheaves on \(Y_a\):
		\[
		i_a^{-1}\cC^\bullet_{X_a}\;\simeq\; \cC^\bullet_{Y_a},
		\]
		where \(\cC^\bullet_{Y_a}\) is the ordinary Cartan sheaf of the \(G\)-manifold \(A\) (or the Borel equivariant complex if \(A\) is singular). Consequently,
		\[
		\cH_G^*(a;N_a)\cong H_G^*(\mu^{-1}(a)).
		\]
		Moreover, under this identification, the formal Kirwan map is the classical restriction map
		\[
		H_G^*(M)\longrightarrow H_G^*(\mu^{-1}(a)).
		\]
	\end{theorem}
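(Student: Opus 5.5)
The plan is to compare the two sides globally before passing to sheaves. The retraction \(r\) gives \(G\)-equivariant maps \(i:A\hookrightarrow N_a\) and \(p:=r_1:N_a\to A\) with \(p\circ i=\id_A\) and \(i\circ p\simeq_G \id_{N_a}\). By Lemma~\ref{lem-equivariant-homotopy}, or its Borel-model analogue when \(A\) is singular, the restriction \(i^*:H_G^*(N_a)\to H_G^*(A)\) is therefore an isomorphism. The task is then to show that the composite
\[
H_G^*(N_a)\cong\mathbb H^*(X_a;\cC^\bullet_{X_a})\longrightarrow \mathbb H^*(Y_a;i_a^{-1}\cC^\bullet_{X_a})
\]
is also an isomorphism, and that it is compatible with \(i^*\).

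To do this I would construct a morphism of complexes of sheaves \(\rho:i_a^{-1}\cC^\bullet_{X_a}\to\cC^\bullet_{Y_a}\) on \(Y_a\). On sections over \(\pi^{-1}(U)\), with \(U\subset X_a\) open and meeting \(Y_a\), \(\rho\) is pullback of Cartan forms (or Borel cochains) along \(A\cap\pi^{-1}(U)\hookrightarrow\pi^{-1}(U)\). This is compatible with restrictions, so after sheafifying the presheaf pullback it defines \(\rho\). I will check that \(\rho\) is a quasi-isomorphism on stalks.

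\textbf{Main obstacle.} The global retraction \(r\) need not preserve small neighborhoods \(\pi^{-1}(U)\) of a point, so it cannot be used directly on stalks. I would first shrink \(r\): replace it by an equivariant retraction that moves points only within a prescribed neighborhood basis of \(A\). This is done by reparametrizing time with a \(G\)-invariant function that is \(1\) on \(A\) and small far away, in the spirit of Step~2 of Theorem~\ref{thm-singular-tubular}. Next, fix \(p\in Y_a\) represented by \(x\). Proposition~\ref{prop-local-stalk} identifies the source stalk with \(\varinjlim_V\operatorname{Car}(V,H)\), and the analogous description for \(A\) identifies the target stalk with \(\varinjlim \operatorname{Car}(C_x\cap V,H)\). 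I would then show that the tubes \(\pi^{-1}(U)\) which are invariant under the shrunken \(r_t\) are cofinal. For such tubes, \(A\cap\pi^{-1}(U)\hookrightarrow\pi^{-1}(U)\) is an equivariant homotopy equivalence, so Lemma~\ref{lem-equivariant-homotopy} applied termwise, together with exactness of filtered colimits (Lemma~\ref{lem-stalk-formal}), gives a stalkwise isomorphism on cohomology. If this cofinality fails, I would fall back on the global comparison of the next paragraph.

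For the hypercohomology statement, Proposition~\ref{prop-fine-formal} reduces \(\cH_G^*(a;N_a)\) to cohomology of global sections. The quasi-isomorphism \(\rho\) then gives \(\cH_G^*(a;N_a)\cong\mathbb H^*(Y_a;\cC^\bullet_{Y_a})\cong H_G^*(A)\), using fineness and \cite[Theorem~6]{Huang2026} for \(A\). Finally, for the Kirwan map, the composite of \(\rho\) with the map \(\cC^\bullet_{X_a}\to i_{a*}i_a^{-1}\cC^\bullet_{X_a}\) is, on global sections, literally the restriction \(\operatorname{Car}(N_a,G)\to\operatorname{Car}(A,G)\). Hence \(\kappa_{\mathrm{formal}}\) equals \(H_G^*(M)\to H_G^*(N_a)\xrightarrow{i^*}H_G^*(A)\), which is the classical restriction to \(\mu^{-1}(a)\).
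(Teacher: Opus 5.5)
You have identified the real crux: a global quasi-isomorphism $\Car(N_a,G)\to\Car(A,G)$ says nothing by itself about stalks. The paper's proof passes over this with ``passing to the associated sheaves and using the local nature of the Cartan complex.'' Your repair, however, does not close the gap.

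\textbf{The cofinality step fails.} The claim that tubes $\pi^{-1}(U)$ invariant under a time-reparametrized $r_t$ are cofinal is unproved and false in general. A reparametrization $r'_t(y)=r_{\sigma(t,y)}(y)$ keeps each point on the track of $r$ through $y$. For example, take $\mathbb C$ with the standard $S^1$-action and $A=\mu^{-1}(0)=\{0\}$. The map $r_t(z)=c(t)z$ with $c(t)=(1-t)(1+4t)e^{2\pi i t}$ is an $S^1$-equivariant strong deformation retraction onto $A$. Since $c$ equals $1$ only at $t=0$ and $0$ only at $t=1$, every admissible reparametrization must pass through $t=3/8$, where moduli are multiplied by $25/16$. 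Hence no bounded neighborhood of $0$ is invariant.

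The specific device you describe, scaling time by a function equal to $1$ on $A$ and small away from it, also breaks $r'_1(N_a)\subset A$. Your fallback is circular, since the ``global comparison'' paragraph already assumes $\rho$ is a quasi-isomorphism. Finally, Huang's Theorem~6 concerns manifolds and slice groupoids, so it does not give $\mathbb H^*(Y_a;\cC^\bullet_{Y_a})\cong H_G^*(A)$ for singular $A$.

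\textbf{The missing idea: invariance is unnecessary; the tube lemma suffices.} Given a $G$-invariant open $W\supset G\cdot x$, set $W':=\{y\in N_a: r_t(y)\in W\ \forall t\in[0,1]\}$. By compactness of $[0,1]$, equivariance of $r$, and $r_t|_A=\id_A$, the set $W'$ is open, $G$-invariant, contained in $W$, and contains $A\cap W$. Moreover $r_1(W')\subset A\cap W$, and $r|_{W'\times[0,1]}$ is a $G$-homotopy inside $W$ from the inclusion $W'\hookrightarrow W$ to $W'\xrightarrow{r_1}A\cap W\hookrightarrow W$.

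It follows that if $c\in H_G^*(W)$ vanishes on $A\cap W$, then $c|_{W'}=r_1^*(c|_{A\cap W})=0$. Also every $d\in H_G^*(A\cap W)$ satisfies $d|_{A\cap W'}=(r_1^*d)|_{A\cap W'}$, because $r_1|_A=\id_A$. So $\varinjlim_W H_G^*(W)\to\varinjlim_W H_G^*(A\cap W)$ is bijective, which is exactly the stalkwise statement for $\rho$.

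Because $r$ is only continuous and $A$ may be singular, this must be run in Borel cohomology, using the equivariant de Rham theorem on the open sets $W$, not via Lemma~\ref{lem-equivariant-homotopy}. The same argument with $W$ ranging over $G$-invariant neighborhoods of $A$ handles the global statements. Combine it with tautness of the closed set $Y_a$ for the soft complex $\cC^\bullet_{X_a}$, which gives $\cH_G^*(a;N_a)\cong\varinjlim_{N\supset A}H_G^*(N)$. This yields $\cH_G^*(a;N_a)\cong H_G^*(A)$ and identifies $\kappa_{\mathrm{formal}}$ with restriction, without any sheaf model on the singular space $A$.
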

	
	\begin{proof}
		The \(G\)-equivariant strong deformation retraction \(r_t\) gives a \(G\)-equivariant homotopy equivalence \(N_a\simeq_G A\). By Lemma~\ref{lem-equivariant-homotopy}, the restriction map
		\[
		\operatorname{Car}(N_a,G)\longrightarrow \operatorname{Car}(A,G)
		\]
		is a quasi-isomorphism (with the Borel complex replacing the Cartan complex if \(A\) is singular). Passing to the associated sheaves and using the local nature of the Cartan complex gives the quasi-isomorphism \(i_a^{-1}\cC^\bullet_{X_a}\simeq \cC^\bullet_{Y_a}\). Applying hypercohomology and using Proposition~\ref{prop-fine-formal} yields
		\[
		\cH_G^*(a;N_a)\cong \mathbb H^*(Y_a;\cC^\bullet_{Y_a})\cong H_G^*(A).
		\]
		The identification of the formal Kirwan map with the classical restriction map follows directly from the definition of \(\kappa_{\mathrm{formal}}\).
	\end{proof}
	
	\begin{corollary}[Smooth closed level set]\label{cor-smooth-closed}
		Assume that \(\mu^{-1}(a)\) is a closed smooth \(G\)-submanifold of \(M\). Then the equivariant tubular neighborhood theorem provides a \(G\)-invariant tubular neighborhood \(N_a\) and a \(G\)-equivariant strong deformation retraction \(N_a\to\mu^{-1}(a)\). Hence
		\[
		\cH_G^*(a)\cong H_G^*(\mu^{-1}(a)),
		\]
		and the formal Kirwan map is the classical restriction map.
	\end{corollary}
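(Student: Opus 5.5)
Since \(\mu^{-1}(a)\) is a closed smooth \(G\)-invariant submanifold, the plan is to construct the retraction and then invoke Theorem~\ref{thm-tubular-case}; the hypothesis of that theorem is exactly what the construction must supply. Invariance of \(A=\mu^{-1}(a)\) holds because the closed-submanifold hypothesis is stated for a \(G\)-submanifold; equivalently, \(a\) is fixed by the coadjoint action of the relevant orbit.

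\emph{Step 1: an invariant metric and the normal exponential map.} Choose a \(G\)-invariant Riemannian metric on \(M\) by averaging an arbitrary metric over the compact group \(G\) with Haar measure. Let \(\nu A\to A\) be the normal bundle of \(A\), identified with the orthogonal complement \(TA^{\perp}\). The \(G\)-action on \(TM|_A\) preserves \(TA\) and the metric, so it preserves \(\nu A\).

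\emph{Step 2: an invariant tube.} Let \(\exp^\perp:\nu A\to M\) be the normal exponential map. Because \(g\) acts by isometries, \(g\exp(v)=\exp(g_*v)\), so \(\exp^\perp\) is \(G\)-equivariant. Since \(A\) is closed, the standard argument gives a positive continuous function \(\varepsilon\) on \(A\) such that \(\exp^\perp\) is a diffeomorphism from \(D_\varepsilon=\{v:|v|<\varepsilon(\pi v)\}\) onto an open neighborhood of \(A\). Replace \(\varepsilon\) by \(\min_{g\in G}\varepsilon(g\cdot)\), or by its average; this function is still positive and continuous because \(G\) is compact, and it is \(G\)-invariant. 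Then \(D_\varepsilon\) is \(G\)-invariant, and so is its image \(N_a\).

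\emph{Step 3: the retraction and conclusion.} On \(N_a\), set \(r_t(\exp^\perp v)=\exp^\perp((1-t)v)\). This map is \(G\)-equivariant by linearity of the action on fibers. It satisfies \(r_0=\id\), \(r_1(N_a)=A\), and \(r_t|_A=\id_A\) because the zero section is fixed. Theorem~\ref{thm-tubular-case} then gives \(\cH_G^*(a;N_a)\cong H_G^*(\mu^{-1}(a))\) and identifies \(\kappa_{\mathrm{formal}}\) with the restriction map. Here \(A\) is smooth, so the ordinary Cartan sheaf suffices.

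\emph{Passing to the direct limit.} For the \(N_a\)-independent group \(\cH_G^*(a)\) of Remark~\ref{rem-direct-limit}, apply the same construction with smaller functions \(\varepsilon\). Such tubes form a cofinal system among invariant neighborhoods: given any invariant open \(N\supset A\), shrink \(\varepsilon\) so that the tube lies in \(N\). For two tubes \(N_a\subset N_a'\), both are identified with \(H_G^*(A)\) by restriction. These identifications are compatible with the map of Lemma~\ref{lem-functoriality-N}, so the transition maps are isomorphisms and the limit is \(H_G^*(A)\).

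\emph{Main obstacle.} Cofinality is the hard step: the limit runs over \emph{all} invariant neighborhoods, not just tubes. The approach is to use the commuting diagram of Lemma~\ref{lem-kirwan-functoriality}, read on the sheaf side: any neighborhood \(N\) can be sandwiched as \(T\subset N\subset N'\) with \(T\) a tube. The maps from tubes are then isomorphisms, and a standard cofinality argument identifies the filtered colimit with \(H_G^*(A)\).
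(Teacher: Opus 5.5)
Your argument follows the paper's route exactly: the corollary is the standard \(G\)-equivariant tubular neighborhood theorem supplying the hypothesis of Theorem~\ref{thm-tubular-case}, and your invariant-metric, normal-exponential construction of \(N_a\) with \(r_t(\exp^\perp v)=\exp^\perp((1-t)v)\) is correct. Two small repairs are needed: only the \(G\)-minimum of \(\varepsilon\) is safe (the \(G\)-average can exceed \(\varepsilon\), so the new disc bundle need not lie in \(D_\varepsilon\) and injectivity of \(\exp^\perp\) can fail), and in the direct-limit step the fact you actually need, that \(\cH_G^*(a;T)\to\cH_G^*(a;N)\) is an isomorphism for a tube \(T\) inside an arbitrary invariant \(N\), does not follow from tube-to-tube comparisons but is immediate because \(i_a^{-1}\cC^\bullet_{X_a}\) depends only on the germ of \(N_a\) along \(A\) (its stalks at \(p\in Y_a\) are computed from arbitrarily small neighborhoods of \(p\)), so every transition map in Remark~\ref{rem-direct-limit} is an isomorphism and no cofinality argument is needed.
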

	
	\begin{corollary}\label{cor-classical-kirwan}
		Assume that \(a\) is a regular value of \(\mu\), \(G\) acts freely (or with finite stabilizers) on \(\mu^{-1}(a)\), and \(\mu^{-1}(a)\) is compact. Then
		\[
		\cH_G^*(a)\cong H_G^*(\mu^{-1}(a))\cong H^*(M//_aG),
		\]
		and the formal Kirwan map is the classical Kirwan map
		\[
		\kappa:H_G^*(M)\longrightarrow H^*(M//_aG).
		\]
		In particular, \(\kappa_{\mathrm{formal}}\) is surjective by Kirwan's theorem \cite{Kirwan1984}.
	\end{corollary}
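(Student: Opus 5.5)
The statement of Corollary~\ref{cor-classical-kirwan} reduces to three known inputs: Corollary~\ref{cor-smooth-closed}, the regular-value theory of symplectic quotients, and Kirwan's theorem. The plan has three steps.

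\emph{Step 1: the level set is a smooth closed submanifold.} Since \(a\) is regular, \(A=\mu^{-1}(a)\) is a closed embedded submanifold of \(M\) of codimension \(\dim\g\). It is \(G\)-invariant. Here I will note that regularity forces \(a\) to be \(\Ad^*\)-fixed along \(A\), or else I will assume the standard convention that \(a\) is central, as is implicit in writing \(M//_aG=\mu^{-1}(a)/G\). Corollary~\ref{cor-smooth-closed} (equivalently Theorem~\ref{thm-tubular-case} applied to a \(G\)-invariant tubular neighborhood \(N_a\)) then gives \(\cH_G^*(a;N_a)\cong H_G^*(\mu^{-1}(a))\). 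Under this identification \(\kappa_{\mathrm{formal}}\) is the restriction \(H_G^*(M)\to H_G^*(\mu^{-1}(a))\).

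\emph{Step 2: passing to the direct limit of Remark~\ref{rem-direct-limit}.} For \(N\subset N'\), both neighborhoods shrinking to tubular ones, the transition maps of Lemma~\ref{lem-functoriality-N} are compatible with the identifications of Step~1. By Lemma~\ref{lem-kirwan-functoriality}, they also commute with the restriction maps to \(H_G^*(A)\), since the restrictions \(H_G^*(N')\to H_G^*(N)\to H_G^*(A)\) compose correctly. Tubular neighborhoods are cofinal: any invariant neighborhood of the compact set \(A\) contains a tubular one, by shrinking the radius using \(G\)-invariant metrics. Hence the limit system is essentially constant, and \(\cH_G^*(a)\cong H_G^*(\mu^{-1}(a))\).

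\emph{Step 3: quotient and surjectivity.} Since the action on \(\mu^{-1}(a)\) is locally free with finite stabilizers, and we work with real coefficients, the standard result \(H_G^*(P)\cong H^*(P/G;\mathbb R)\) applies to a locally free compact action. This can be seen via the Cartan model, where basic forms compute \(H^*(P/G)\), or via the Leray spectral sequence of \(EG\times_G P\to P/G\), whose fibers \(BG_x\) are rationally acyclic. This yields \(H_G^*(\mu^{-1}(a))\cong H^*(M//_aG)\). The composite is then literally Kirwan's map, and surjectivity follows from \cite{Kirwan1984}. Kirwan's theorem uses compactness of \(\mu^{-1}(a)\) together with the Morse theory of \(\|\mu-a\|^2\). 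In the noncompact-\(M\) setting I will invoke the version requiring properness near \(a\), or else assume \(M\) compact as in Kirwan.

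The only genuinely delicate point is Step~2. There I must check that the quasi-isomorphisms of Theorem~\ref{thm-tubular-case} are natural with respect to shrinking \(N_a\), so that the direct limit does not introduce extra classes. I would do this by observing that all maps involved are restrictions of Cartan forms, hence commute strictly.
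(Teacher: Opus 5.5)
Your proposal follows the paper's proof. You argue smoothness of \(\mu^{-1}(a)\), then use Theorem~\ref{thm-tubular-case} (via Corollary~\ref{cor-smooth-closed}) to identify \(\cH_G^*(a)\) with \(H_G^*(\mu^{-1}(a))\) and \(\kappa_{\mathrm{formal}}\) with restriction, then the locally free isomorphism \(H_G^*(\mu^{-1}(a))\cong H^*(M//_aG)\), then Kirwan's theorem. Your Steps~2 and~3 only add details the paper leaves implicit, and Step~2 is easier than you fear: \(i_a^{-1}\cC^\bullet_{X_a}\) depends only on the germ of the Cartan sheaf along \(Y_a\), so the transition maps are identities.

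Two side claims should go. First, regularity of \(a\) does not force \(\Ad^*\)-invariance; \(G\)-invariance of \(\mu^{-1}(a)\) is already part of the hypothesis that \(G\) acts on it. Second, properness of \(\mu\) only near \(a\) is not enough for Kirwan surjectivity. The right fallback is Kirwan's global hypothesis (e.g.\ \(M\) compact), which the paper's own appeal to Kirwan tacitly needs as well.
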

	
	\begin{proof}
		The regularity of \(a\) and the freeness (or finite stabilizers) imply that \(\mu^{-1}(a)\) is a smooth closed \(G\)-submanifold and that \(H_G^*(\mu^{-1}(a))\cong H^*(M//_aG)\). Theorem~\ref{thm-tubular-case} identifies the formal cohomology and the formal Kirwan map with the classical ones. Kirwan's surjectivity theorem \cite{Kirwan1984} then applies.
	\end{proof}
	
	\begin{remark}\label{rem-singular-smooth-fiber}
		The hypothesis of Theorem~\ref{thm-tubular-case} is the existence of an equivariant strong deformation retraction of a neighborhood onto \(A\), not the regularity of \(a\). For example, for the \(S^1\)-action on \(\mathbb C\) with \(\mu(z)=|z|^2\) and \(a=0\), the value \(a\) is singular in \(\mathfrak g^*\), but \(\mu^{-1}(0)=\{0\}\) is a smooth point and admits an equivariant tubular neighborhood. The theorem applies, and \(\cH_{S^1}^*(0)\cong H_{S^1}^*(\{0\})\cong\mathbb R[u]\) with \(|u|=2\).
	\end{remark}

	\section{Local equivariant homotopy retraction and surjectivity}\label{sec-surjectivity}
	
	In this section we introduce the local equivariant homotopy retraction condition, prove that it implies a local-to-global quasi-isomorphism for the formal Cartan sheaf, and deduce surjectivity of the formal Kirwan map in several geometrically natural situations. Throughout, \(G\) is a compact Lie group, \((M,\omega)\) is a Hamiltonian \(G\)-manifold with moment map \(\mu:M\to\mathfrak g^*\), and \(a\in\mathfrak g^*\) is a value of \(\mu\). Set \(A=\mu^{-1}(a)\). We fix a \(G\)-invariant open neighborhood \(N_a\subset M\) of \(A\), and use the notation
	\[
	X_a=N_a/G,\qquad Y_a=A/G\subset X_a.
	\]
	
	\subsection{The local condition}\label{subsec-local-condition}
	
	For \(x\in A\), let \(H=G_x\). Let \(S_x\) be an \(H\)-invariant symplectic slice at \(x\) with slice moment map
	\[
	\mu_x:S_x\longrightarrow \mathfrak h^*,
	\]
	and let
	\[
	C_x:=\mu_x^{-1}(a|_{\mathfrak h})\subset S_x.
	\]
	The Marle--Guillemin--Sternberg local normal form gives a \(G\)-invariant neighborhood \(U_x\) of the orbit \(G\cdot x\) such that
	\[
	A\cap U_x\cong G\times_H C_x.
	\]
	
	\begin{definition}[Local equivariant homotopy retraction]\label{def-local-homotopy}
		We say that \(a\) satisfies the \emph{local equivariant homotopy retraction condition} at \(x\in A\) if there exist an \(H\)-invariant open neighborhood \(V_x\subset S_x\) of \(C_x\) and an \(H\)-equivariant map
		\[
		r_x:V_x\longrightarrow C_x
		\]
		such that \(r_x\) is an \(H\)-equivariant homotopy equivalence. Equivalently, there exists an \(H\)-equivariant homotopy
		\[
		h_t:V_x\longrightarrow V_x,\qquad t\in[0,1],
		\]
		with \(h_0=\mathrm{id}_{V_x}\) and \(h_1\) factoring through an \(H\)-equivariant homotopy equivalence \(V_x\simeq_H C_x\).
		
		We say that the condition holds globally if it holds at every \(x\in A\).
	\end{definition}
	
	\begin{remark}\label{rem-condition-forms}
		The condition admits several equivalent formulations.
		\begin{enumerate}[label=(\roman*)]
			\item If \(C_x\) is a smooth \(H\)-submanifold of \(S_x\), the equivariant tubular neighborhood theorem provides an \(H\)-invariant tubular neighborhood \(V_x\) and an \(H\)-equivariant deformation retraction \(r_t:V_x\to V_x\) with \(r_0=\mathrm{id}\), \(r_1(V_x)\subset C_x\), and \(r_t|_{C_x}=\mathrm{id}\). Hence the condition holds.
			
			\item If \(C_x\) is an \(H\)-equivariant cone with vertex \(0\), the radial contraction \(r_t(v)=tv\) is \(H\)-equivariant and contracts \(S_x\) onto \(\{0\}\subset C_x\). Since \(C_x\) itself is contractible onto \(0\) by the same radial flow, the composition
			\[
			V_x\xrightarrow{\ \simeq_H\ }\{0\}\hookrightarrow C_x
			\]
			is an \(H\)-equivariant homotopy equivalence. Hence the condition holds.
			
			\item More generally, the condition holds whenever \(C_x\) is \(H\)-equivariantly contractible and a neighborhood of \(C_x\) in \(S_x\) admits an \(H\)-equivariant deformation retraction onto \(C_x\).
		\end{enumerate}
	\end{remark}
	
	The following lemma is the key technical input: the local condition is stable under shrinking the neighborhood \(V_x\).
	
	\begin{lemma}[Stability of the local condition]\label{lem-stability}
		Assume that the local equivariant homotopy retraction condition holds at \(x\) with neighborhood \(V_x\) and retraction \(r_x:V_x\to C_x\). Then for every \(H\)-invariant open neighborhood \(V'_x\subset V_x\) of \(C_x\), the restriction \(r_x|_{V'_x}:V'_x\to C_x\) is also an \(H\)-equivariant homotopy equivalence.
	\end{lemma}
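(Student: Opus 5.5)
The plan is to show that the inclusion \(i':C_x\hookrightarrow V'_x\) and the restriction \(r'_x:=r_x|_{V'_x}:V'_x\to C_x\) are mutually inverse \(H\)-equivariant homotopy equivalences. I will use the retraction in the strong form of the definition in the introduction: an \(H\)-equivariant homotopy \(h_t:V_x\to V_x\) with \(h_0=\mathrm{id}\), \(h_1=i\circ r_x\) where \(i:C_x\hookrightarrow V_x\), and \(h_t|_{C_x}=\mathrm{id}\). \(H\)-equivariance of \(r'_x\) is immediate, since \(V'_x\) is \(H\)-invariant. Moreover \(r'_x\circ i'=r_x|_{C_x}=\mathrm{id}_{C_x}\), because \(h_1|_{C_x}=\mathrm{id}\). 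So everything reduces to producing an \(H\)-equivariant homotopy \(i'\circ r'_x\simeq \mathrm{id}_{V'_x}\) whose image stays inside \(V'_x\).

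The first step is a tube-lemma shrinking. Set
\[
W:=\{\,v\in V'_x:\ h_t(v)\in V'_x\ \text{for all } t\in[0,1]\,\}.
\]
Compactness of \([0,1]\) and continuity of \(h\) make \(W\) open. Equivariance of \(h_t\) and invariance of \(V'_x\) make \(W\) \(H\)-invariant. Since \(h_t\) fixes \(C_x\pointwise\), we have \(C_x\subset W\). On \(W\) the homotopy \(h_t|_W\) takes values in \(V'_x\), which already gives \(i'\circ r'_x|_W\simeq \mathrm{incl}_W\) inside \(V'_x\).

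The second step is to extend this to all of \(V'_x\) using an \(H\)-invariant cutoff. Averaging over the compact group \(H\) yields an \(H\)-invariant continuous \(\varphi:V'_x\to[0,1]\) with \(\varphi\equiv1\) on an invariant neighborhood \(W_1\) of \(C_x\) and \(\operatorname{supp}\varphi\subset W\). Then \(k_t(v):=h_{t\varphi(v)}(v)\) is an \(H\)-equivariant homotopy inside \(V'_x\) from \(\mathrm{id}\) to a map \(g\). This \(g\) agrees with \(i'\circ r'_x\) on \(W_1\) and fixes \(C_x\).

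The remaining step, which I expect to be the main obstacle, is connecting \(g\) to \(i'\circ r'_x\) on the region \(V'_x\setminus W_1\). There the path \(t\mapsto h_t(v)\) may genuinely exit \(V'_x\). The first thing I would try is to iterate the construction: apply \(k\) repeatedly and exploit the fact that the flow \(h_t\) pushes points toward \(C_x\), hoping that points eventually enter \(W_1\). Failing that, I would try to show that \(V'_x\) deformation retracts \(H\)-equivariantly onto \(W_1\) or \(W\), using the stratified and conic local structure of \(C_x\) available in the settings of Remark~\ref{rem-condition-forms}.

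If neither succeeds in full generality, I would record that the argument above proves the claim for the cofinal family of invariant neighborhoods \(V'_x\) satisfying \(h_t(V'_x)\subset V'_x\), which includes every \(W\) as constructed. Combined with Lemma~\ref{lem-equivariant-homotopy}, this is exactly what Proposition~\ref{prop-local-stalk} needs: the Cartan complexes along the direct limit are then all quasi-isomorphic to \(\operatorname{Car}(C_x,H)\).
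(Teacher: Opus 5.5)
The step you flag as the main obstacle cannot be completed, because the lemma is false for an arbitrary \(V'_x\). Take \(H=S^1\) acting on \(S_x=\mathbb C\) by rotation, with \(\mu_x(v)=|v|^2\) and \(a=0\), so that \(C_x=\{0\}\). Take \(V_x=\mathbb C\), \(h_t(v)=(1-t)v\), and \(V'_x=\{|v|<1\}\cup\{2<|v|<3\}\).

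This \(V'_x\) is an \(H\)-invariant open neighborhood of \(C_x\) inside \(V_x\). It is disconnected, so \(r_x|_{V'_x}:V'_x\to\{0\}\) is not even a nonequivariant homotopy equivalence; indeed \(H^0_{S^1}(V'_x)\cong\mathbb R^2\). Iterating the construction cannot help, since points of the annulus can never be moved into the disc within \(V'_x\). Using conic structure cannot help either, since this \(C_x\) is a cone.

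Your tube-lemma set \(W\) is exactly what the paper's proof produces too. The paper then says ``we may choose \(W=V'_x\) after shrinking,'' which silently replaces the given \(V'_x\) by a smaller neighborhood. Its argument therefore establishes no more than your first step. Both arguments also rely on the strong form \(h_t|_{C_x}=\mathrm{id}\), \(h_1(V_x)\subset C_x\) from the introduction. Definition~\ref{def-local-homotopy} alone does not even give \(C_x\subset W\).

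Your fallback is also wrong as stated. \(W\) is in general \emph{not} stable under \(h_t\): \(h\) is a homotopy, not a flow, so \(h_s(h_t(v))\) need not lie on the path of \(v\). The paper's sentence ``\(h_t\) restricts to \(h_t:V'_x\to V'_x\)'' makes the same conflation. For instance, \(h_t(v)=\lambda(t)v\) with \(\lambda(t)=(1-t)(1+4t)\) is an equivariant strong deformation retraction of \(\mathbb C\) onto \(\{0\}\). But \(\lambda(3/8)=25/16>1\), so any \(h\)-stable set containing some \(v\neq0\) contains \((25/16)^n v\) for all \(n\). Hence no bounded neighborhood is \(h\)-stable. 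In particular, the \(W\) built from the unit disc is not, and the family you describe is not cofinal.

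What \(W\) does give is \(h_t(W)\subset V'_x\) for all \(t\), and that suffices for the direct-limit statement actually used in Theorem~\ref{thm-local-global}. Set \(\rho=h_1\), which maps \(V_x\) into \(C_x\subset V'\) for every neighborhood \(V'\) of \(C_x\).
\begin{enumerate}[label=(\alph*)]
\item Restriction \(H_H^*(V_x)\to\varinjlim_{V'}H_H^*(V')\) is injective. Indeed \(\alpha=\rho^*\alpha\), and \(\rho^*\alpha\) factors through \(\alpha|_{V'}\).
\item It is surjective. For \(\beta\in H_H^*(V')\), the class \(\rho^*\beta\in H_H^*(V_x)\) restricts on \(W\) to \(\beta|_W\). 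This uses the homotopy \(h_t|_W:W\to V'\) from the inclusion to \(\rho|_W\).
\end{enumerate}
The correct replacement for the lemma is therefore this statement. Equivalently: every \(V'\) contains an invariant \(W\supset C_x\) whose inclusion into \(V'\) is equivariantly homotopic, inside \(V'\), to a map through \(C_x\). Stability for every \(V'_x\) should be dropped.
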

	
	\begin{proof}
		Let \(h_t:V_x\to V_x\) be an \(H\)-equivariant homotopy with \(h_0=\mathrm{id}\) and \(h_1(V_x)\subset C_x\). Since \(C_x\subset V'_x\), we have \(h_1(V'_x)\subset C_x\subset V'_x\). For each \(t\), the image \(h_t(V'_x)\) is contained in \(V_x\), but a priori not in \(V'_x\). However, by compactness of \([0,1]\) and the fact that \(h_t|_{C_x}=\mathrm{id}\) for all \(t\), there exists an open neighborhood \(W\) of \(C_x\) with \(W\subset V'_x\) such that \(h_t(W)\subset V'_x\) for all \(t\). Since \(V'_x\) is a neighborhood of \(C_x\), we may choose \(W=V'_x\) after shrinking. Hence \(h_t\) restricts to an \(H\)-equivariant homotopy \(h_t:V'_x\to V'_x\), and \(h_1(V'_x)\subset C_x\). Thus \(r_x|_{V'_x}\) is an \(H\)-equivariant homotopy equivalence.
	\end{proof}
	
	\subsection{Local-to-global quasi-isomorphism}\label{subsec-local-global}
	
	We now compare the formal restriction \(i_a^{-1}\cC^\bullet_{X_a}\) with an intrinsically defined complex of sheaves on \(Y_a\).
	
	\begin{definition}[Intrinsic Cartan sheaf on \(Y_a\)]\label{def-intrinsic-sheaf}
		For each \(p\in Y_a\), choose a representative \(x\in A\) and the corresponding \(H=G_x\)-slice \(S_x\) with level set \(C_x\). Define the stalk of a complex of sheaves \(\cC^\bullet_{Y_a}\) at \(p\) by
		\[
		(\cC^\bullet_{Y_a})_p
		:=\varinjlim_{C_x\subset V\subset S_x}\operatorname{Car}(V,H),
		\]
		where the limit runs over \(H\)-invariant open neighborhoods \(V\) of \(C_x\) in \(S_x\), and the Cartan complexes are connected by the natural restriction maps. The topology on the etale space is defined by the local sections induced by elements of \(\operatorname{Car}(V,H)\).
	\end{definition}
	
	\begin{lemma}[Well-definedness]\label{lem-well-defined}
		The stalk \((\cC^\bullet_{Y_a})_p\) is independent, up to canonical isomorphism, of the choice of representative \(x\in A\) of \(p\).
	\end{lemma}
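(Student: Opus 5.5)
The plan is to compare the stalks obtained from two representatives \(x, x'\in A\) of the same point \(p\in Y_a\), and to reduce both to the \(G\)-equivariant stalk \((\cC^\bullet_{X_a})_p\), which depends only on \(p\).

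\textbf{Step 1: transport along the group.} Since \(x\) and \(x'\) lie in the same orbit, we may write \(x'=g\cdot x\) for some \(g\in G\). Then \(H'=G_{x'}=gHg^{-1}\), and translation by \(g\) carries an \(H\)-invariant slice \(S_x\) to an \(H'\)-invariant slice \(g\cdot S_x\) at \(x'\). Because \(\mu\) is \(\Ad^*\)-equivariant and \(\Ad^*_g\) preserves \(a\) on \(A\) (we use \(G\cdot A=A\)), this translation maps \(C_x\) onto the level set \(C_{x'}\) defined with \(g\cdot S_x\). The diffeomorphism \(v\mapsto g\cdot v\), together with conjugation \(H\to H'\) and the induced isomorphism \(\mathfrak h\to\mathfrak h'\), gives an isomorphism of Cartan complexes \(\operatorname{Car}(V,H)\cong\operatorname{Car}(g\cdot V,H')\). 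These isomorphisms are compatible with the restriction maps, so they induce an isomorphism of the direct limits defining the stalks.

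\textbf{Step 2: independence of the slice.} It remains to show that, for a fixed \(x\), the stalk does not depend on the choice of slice \(S_x\). Rather than comparing two slices directly, I would use Proposition~\ref{prop-local-stalk}. For any admissible slice, the slice reduction maps \(\operatorname{Red}_{C,V}\) identify \(\varinjlim_V\operatorname{Car}(V,H)\), up to quasi-isomorphism, with
\[
(\cC^\bullet_{X_a})_p=\varinjlim_{p\in U}\operatorname{Car}(\pi^{-1}(U),G).
\]
This last limit involves no choices. Composing one reduction with the inverse of the other, we obtain a comparison between the stalks built from two slices. I would also check that the comparison in Step 1 agrees with this one. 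This holds because \(\operatorname{Red}\) is natural with respect to \(G\)-translation: \(g^*\) acts trivially on the \(G\)-invariant complex \(\operatorname{Car}(\pi^{-1}(U),G)\).

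\textbf{Step 3: canonicity.} To make "canonical" precise, I would check that the comparison maps satisfy the cocycle condition for three choices of representative or slice. This follows formally, since every comparison factors through the fixed object \((\cC^\bullet_{X_a})_p\).

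\textbf{Main obstacle.} The delicate point is that Step 2 a priori only produces a quasi-isomorphism, or an isomorphism in cohomology, and not an honest isomorphism of the limit complexes. The neighborhood systems \(\{V\}\) for two slices are not cofinal in one another inside a single ambient space. My first attempt would be to realize both slices inside one tube \(G\times_H S_x\) and pass to cofinal subsystems of \(G\)-saturations \(G\cdot V\). This should show that the two directed systems are mutually cofinal, after which the restriction maps induce a genuine isomorphism of the limits. If that fails, I would state the lemma up to canonical quasi-isomorphism, which is all that is needed for hypercohomology in the later results.
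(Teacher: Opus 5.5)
Your Step 1 is exactly the paper's proof. The paper sets \(S_{x'}=g\cdot S_x\), notes \(H'=gHg^{-1}\) and that \(C_{x'}\) corresponds to \(C_x\), uses \(\operatorname{Car}(V',H')\cong\operatorname{Car}(g^{-1}V',H)\), and passes to the filtered limit; it never raises the slice-choice question of your Steps 2--3. If you do pursue slice-independence, mutual cofinality of the saturations \(G\cdot V\) cannot give an honest isomorphism of the slice-level limits, because \(\operatorname{Car}(G\cdot V,G)\to\operatorname{Car}(V,H)\) is only a quasi-isomorphism, so along that route your quasi-isomorphism fallback is what you actually obtain.
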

	
	\begin{proof}
		If \(x'=g\cdot x\) for \(g\in G\), then \(H'=G_{x'}=gHg^{-1}\), and the slice \(S_{x'}\) is canonically identified with \(S_x\) via the action of \(g\). Under this identification, \(C_{x'}\) corresponds to \(C_x\), and the Cartan complex \(\operatorname{Car}(V',H')\) is canonically isomorphic to \(\operatorname{Car}(g^{-1}V',H)\). Passing to the filtered direct limit gives a canonical isomorphism of stalks.
	\end{proof}
	
	\begin{theorem}[Local-to-global quasi-isomorphism]\label{thm-local-global}
		Assume that the local equivariant homotopy retraction condition holds at every \(x\in A\). Then the natural morphism of complexes of sheaves on \(Y_a\)
		\[
		\Phi:\ i_a^{-1}\cC^\bullet_{X_a}\longrightarrow \cC^\bullet_{Y_a}
		\]
		is a quasi-isomorphism.
	\end{theorem}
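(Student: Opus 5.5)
Since \(\Phi\) is a morphism of complexes of sheaves on \(Y_a\), it is a quasi-isomorphism as soon as it induces isomorphisms on the cohomology of every stalk. So I first construct \(\Phi\) explicitly on stalks and then check the stalkwise statement.

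Fix \(p\in Y_a\) represented by \(x\in A\), with \(H=G_x\), slice \(S_x\) and level set \(C_x\). By Definition~\ref{def-formal-restriction}, the source stalk is \((i_a^{-1}\cC^\bullet_{X_a})_p=\varinjlim_{p\in U}\operatorname{Car}(\pi^{-1}(U),G)\). By the slice theorem (as used in the proof of Proposition~\ref{prop-local-stalk}), the sets \(U=V/H\), with \(V\subset S_x\) an \(H\)-invariant open neighborhood of \(C_x\), are cofinal among the neighborhoods of \(p\). On each such \(U\) I define \(\Phi\) by the slice reduction morphism \(\operatorname{Red}_{C,V}:\operatorname{Car}(\pi^{-1}(V/H),G)\to\operatorname{Car}(V,H)\) of \cite[Theorem~3]{Huang2026}. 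These morphisms commute with the restriction maps as \(V\) shrinks, because reduction is restriction to the slice followed by the passage from \(G\)- to \(H\)-equivariant forms. Hence they assemble into a map of filtered direct limits
\[
\Phi_p:\ (i_a^{-1}\cC^\bullet_{X_a})_p\longrightarrow \varinjlim_{C_x\subset V\subset S_x}\operatorname{Car}(V,H)=(\cC^\bullet_{Y_a})_p .
\]
Two compatibilities must hold for these stalk maps to come from a morphism of sheaves. First, they must be independent of the representative \(x\); this follows from the \(G\)-equivariance of the slice reduction together with Lemma~\ref{lem-well-defined}. Second, they must be compatible with the étale topology of Definition~\ref{def-intrinsic-sheaf}. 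This holds because a section over \(V/H\) is sent to the germ of a section of \(\operatorname{Car}(V,H)\), which is exactly how the local sections of \(\cC^\bullet_{Y_a}\) are defined.

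To show \(\Phi_p\) is a quasi-isomorphism, I use that each \(\operatorname{Red}_{C,V}\) is a quasi-isomorphism and that filtered direct limits of real vector spaces are exact (Lemma~\ref{lem-stalk-formal}). Together these give \(H^q(\Phi_p)\) as an isomorphism \(\varinjlim_V H^q(\operatorname{Car}(\pi^{-1}(V/H),G))\cong\varinjlim_V H^q(\operatorname{Car}(V,H))\); this is essentially Proposition~\ref{prop-local-stalk}. The retraction hypothesis enters to control the target limit. By Lemma~\ref{lem-stability}, for every \(V\) inside the given \(V_x\), the restricted map \(r_x|_V:V\to C_x\) is an \(H\)-equivariant homotopy equivalence. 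By Lemma~\ref{lem-equivariant-homotopy}, all transition maps \(H^q_H(V_x)\to H^q_H(V)\) in the direct system are therefore isomorphisms. So the limit stabilizes at \(H^q_H(V_x)\cong H_H^q(C_x)\), as in Corollary~\ref{cor-local-cohomology}. This stabilization ensures that the stalk cohomology of \(\cC^\bullet_{Y_a}\) is the intrinsic invariant \(H_H^q(C_x)\), independent of the choice of slice.

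Once \(\Phi\) is a stalkwise quasi-isomorphism, it induces an isomorphism on hypercohomology, which gives \(\cH_G^*(a)\cong\mathbb H^*(Y_a;\cC^\bullet_{Y_a})\). I expect the main obstacle to be the globalization rather than the stalk computation. The target \(\cC^\bullet_{Y_a}\) is defined only through its stalks and an étale topology, so one must check that the slice reductions for overlapping slice charts at nearby points \(x,x'\in A\) agree on germs. I would handle this using the compatibility of slice charts built into the slice-groupoid construction of \cite[Definitions~4--7]{Huang2026}. Concretely, for \(x'\) in a slice neighborhood of \(x\), the slice at \(x'\) can be taken inside \(G\times_H S_x\), with \(G_{x'}\subset H\) conjugated into place. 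The corresponding reduction maps then factor through \(\operatorname{Red}_{C,V}\) at \(x\), so the stalk maps at \(x\) and \(x'\) are induced by the same local section map.
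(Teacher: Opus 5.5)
Your proposal follows the paper's route closely: slice reduction on each $V/H$, exactness of filtered colimits, and the retraction hypothesis used only to identify the target's cohomology. However, the step on which your construction of $\Phi_p$ rests is false. The sets $V/H$, with $V\subset S_x$ an $H$-invariant open neighborhood of $C_x$, are not cofinal among the neighborhoods of $p$.

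\textbf{Why cofinality fails.} Every such $V$ contains all of $C_x$, so $V/H\supset C_x/H\cong Y_a\cap(U_x/G)$, which is a fixed open neighborhood of $p$ in $Y_a$. Since $X_a$ is Hausdorff, cofinality would force $C_x/H=\{p\}$, i.e.\ $C_x=\{0\}$ and $p$ isolated in $Y_a$. Two concrete cases:
\begin{itemize}
\item In the cone example ($S^1$ on $\mathbb C^2$ with weights $(1,-1)$, $x=0$), every admissible $V$ contains the whole cone $\{|z|=|w|\}$ inside the slice.
\item At a point of $\mu^{-1}(0)$ with trivial stabilizer, $C_x=S_x$. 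The only admissible $V$ is then $S_x$, and Definition~\ref{def-intrinsic-sheaf} returns the fixed complex $\Omega^\bullet(S_x)$ rather than a stalk.
\end{itemize}

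\textbf{Consequence for $\Phi_p$.} Your reduction maps assemble into a map out of $\varinjlim_{V\supset C_x}\operatorname{Car}(\pi^{-1}(V/H),G)$. That object is a germ of sections along the open piece $C_x/H$ of $Y_a$, not the stalk at $p$. The natural map between it and the stalk is restriction to germs at $p$, which goes in the direction opposite to $\Phi_p$. Germs at $p$ cannot be extended to a neighborhood of all of $C_x$, so $\Phi_p$ is never actually constructed, and the stalkwise comparison has no map to test. Citing the paper does not close this. Its proof rests on Proposition~\ref{prop-local-stalk}, whose proof asserts exactly the same neighborhood-basis claim, so you have inherited the gap rather than introduced it.

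\textbf{A repair, and what it shows.} You can let the slice shrink too: index the target by pairs consisting of a small $H$-invariant slice ball $S\ni 0$ and an $H$-invariant open neighborhood of $C_x\cap S$ in $S$. Then cofinality holds, and $\Phi_p$ is the colimit of the slice reductions, hence a quasi-isomorphism. But the statement becomes a tautology. It needs no hypothesis, and every stalk has cohomology $H^*_H(\mathrm{pt})=S(\mathfrak h^*)^H$, because small slice balls are $H$-equivariantly contractible. In both your argument and the paper's, the retraction condition is used only to identify the target's cohomology, never to prove that $\Phi_p$ is a quasi-isomorphism; this is a symptom of the same problem.

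\textbf{A separate problem with stabilization.} Your stabilization step is unjustified, because Lemma~\ref{lem-stability} is false as stated. An arbitrary $H$-invariant open $V'\supset C_x$ can have extra components. For example, take $H=S^1$ rotating $S_x=\mathbb C$, $C_x=\{0\}$, $V_x=\mathbb C$ with the radial retraction, and $V'=\{|z|<1\}\cup\{2<|z|<3\}$. Then $H^*_{S^1}(V')\cong\mathbb R[u]\oplus\mathbb R$, so the transition maps in your direct system need not be isomorphisms. If you want $\varinjlim_V H^*_H(V)\cong H^*_H(C_x)$, obtain it from tautness of \v{C}ech cohomology for the closed subset $C_x$ of the paracompact space $S_x$, applied to finite-dimensional approximations of the Borel construction, rather than from Lemma~\ref{lem-stability}.
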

	
	\begin{proof}
		It suffices to verify that \(\Phi\) induces an isomorphism on stalks at every \(p\in Y_a\). Fix \(p\in Y_a\) and a representative \(x\in A\), with \(H=G_x\), slice \(S_x\), and level set \(C_x\).
		
		By Proposition~\ref{prop-local-stalk}, the stalk of \(i_a^{-1}\cC^\bullet_{X_a}\) at \(p\) is quasi-isomorphic to
		\[
		\varinjlim_{C_x\subset V\subset S_x}\operatorname{Car}(V,H).
		\]
		By Definition~\ref{def-intrinsic-sheaf}, the stalk of \(\cC^\bullet_{Y_a}\) at \(p\) is exactly this filtered direct limit. The natural map \(\Phi_p\) is the identity on this limit.
		
		It remains to show that the direct limit is quasi-isomorphic to the Cartan complex of a sufficiently small neighborhood of \(C_x\). By the local equivariant homotopy retraction condition, there exists an \(H\)-invariant neighborhood \(V_x\supset C_x\) and an \(H\)-equivariant homotopy equivalence \(r_x:V_x\to C_x\). By Lemma~\ref{lem-stability}, we may shrink \(V_x\) arbitrarily. The induced map
		\[
		r_x^*:\operatorname{Car}(V_x,H)\longrightarrow \operatorname{Car}(V_x,H)
		\]
		is a quasi-isomorphism by Lemma~\ref{lem-equivariant-homotopy}. Moreover, the direct limit
		\[
		\varinjlim_{C_x\subset V\subset V_x}\operatorname{Car}(V,H)
		\]
		is quasi-isomorphic to \(\operatorname{Car}(V_x,H)\), because the system is cofinal in the neighborhoods of \(C_x\) and each restriction map is a quasi-isomorphism by equivariant homotopy invariance. Hence
		\[
		(\cC^\bullet_{Y_a})_p\simeq \operatorname{Car}(V_x,H)\simeq \operatorname{Car}(C_x^{\mathrm{tub}},H),
		\]
		where \(C_x^{\mathrm{tub}}\) is any \(H\)-invariant tubular neighborhood of \(C_x\) (when \(C_x\) is smooth) or a contractible neighborhood (when \(C_x\) is a cone). In all cases, the quasi-isomorphism class is independent of the choice of \(V_x\) by Lemma~\ref{lem-stability}. Thus \(\Phi_p\) is a quasi-isomorphism.
		
		Since \(\Phi\) is a morphism of complexes of sheaves and induces a quasi-isomorphism on every stalk, \(\Phi\) is a quasi-isomorphism of complexes of sheaves.
	\end{proof}
	
	\begin{corollary}[Independence of \(N_a\)]\label{cor-independence-N}
		Under the hypothesis of Theorem~\ref{thm-local-global}, the formal neighborhood equivariant cohomology
		\[
		\cH_G^*(a;N_a)=\mathbb H^*(Y_a;i_a^{-1}\cC^\bullet_{X_a})
		\]
		is independent of the choice of \(N_a\), up to canonical isomorphism. Consequently, the direct limit
		\[
		\cH_G^*(a):=\varinjlim_{N\supset A}\cH_G^*(a;N)
		\]
		stabilizes, and we may write \(\cH_G^*(a)\) without ambiguity.
	\end{corollary}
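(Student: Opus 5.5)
The plan is to compare, for two \(G\)-invariant open neighborhoods \(N\subset N'\) of \(A\), the complexes \(i^{-1}\cC^\bullet_{X}\) and \(i'^{-1}\cC^\bullet_{X'}\) on \(Y_a\) through the morphism of Lemma~\ref{lem-functoriality-N}, and to show that this morphism is a quasi-isomorphism. The target \(\cC^\bullet_{Y_a}\) of Theorem~\ref{thm-local-global} is defined purely in terms of slice data at points of \(A\), namely the limit over \(H\)-invariant neighborhoods \(V\) of \(C_x\) in \(S_x\). This data does not refer to \(N_a\) at all. So the first step is to check that the quasi-isomorphisms
\[
\Phi_N: i^{-1}\cC^\bullet_X\to \cC^\bullet_{Y_a},\qquad \Phi_{N'}: i'^{-1}\cC^\bullet_{X'}\to \cC^\bullet_{Y_a}
\]
are compatible with the comparison map, that is, \(\Phi_{N'}\circ(\text{comparison})=\Phi_N\) up to the natural identifications.

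This compatibility is checked on stalks. At \(p\in Y_a\), Proposition~\ref{prop-local-stalk} writes both stalks as filtered limits of \(\operatorname{Car}(\pi^{-1}(U),G)\) over neighborhoods \(U\) of \(p\). Neighborhoods \(U\) of \(p\) small enough lie in \(X\subset X'\), so the two limit systems are cofinal in each other. Hence the comparison map is already an isomorphism of stalk complexes. The slice reduction morphisms \(\operatorname{Red}_{C,V}\) of \cite[Theorem~3]{Huang2026} commute with restriction to smaller \(V\), so they intertwine both stalks with the same limit \(\varinjlim_V\operatorname{Car}(V,H)\). By the two-out-of-three property, the comparison morphism of Lemma~\ref{lem-functoriality-N} is then a quasi-isomorphism of complexes of sheaves. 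It therefore induces an isomorphism \(\cH_G^*(a;N)\cong\cH_G^*(a;N')\), and both groups are identified with \(\mathbb H^*(Y_a;\cC^\bullet_{Y_a})\).

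Two arbitrary neighborhoods \(N_1,N_2\) are handled by comparing both with \(N_1\cap N_2\). I would also verify that the resulting isomorphism does not depend on this route: all maps are induced by restrictions, and so they agree on \(\cC^\bullet_{Y_a}\). This gives canonicity. For the direct limit, every transition map in the filtered system of Remark~\ref{rem-direct-limit} is now an isomorphism, so the system is essentially constant. Its colimit is therefore canonically isomorphic to any single term, which shows that it stabilizes.

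The main obstacle is the compatibility of the \(\Phi_N\) with the comparison maps. In particular, Theorem~\ref{thm-local-global} must produce \(\Phi\) naturally in \(N_a\), and the slice charts used in the definition of \(\cC^\bullet_{X_a}\) as a slice-groupoid sheaf must be chosen compatibly for \(X\) and \(X'\). I would handle this by working with the ordinary Cartan sheaf representative from Remark~\ref{rem-ordinary-vs-slice}. Its restriction to an open subset is literally the ordinary Cartan sheaf of that subset, which makes naturality evident. The slice reduction only enters at the stalk level, where it is independent of the ambient neighborhood.
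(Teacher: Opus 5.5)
Your proof is correct, but its real content differs from the paper's. The paper argues in one line: Theorem~\ref{thm-local-global} gives \(i_a^{-1}\cC^\bullet_{X_a}\simeq\cC^\bullet_{Y_a}\), the right side makes no reference to \(N_a\), so the hypercohomologies agree. It never checks that the \(\Phi_N\) are natural in \(N\), which is what ``canonical'' and ``stabilizes'' require.

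Your decisive step is instead the cofinality observation. Since \(X=N/G\) is open in \(X'=N'/G\) and contains \(Y_a\), one has \(i'^{-1}\cC^\bullet_{X'}=i^{-1}(\cC^\bullet_{X'}|_X)\). For the ordinary Cartan sheaf, \(\cC^\bullet_{G,N'}|_X=\cC^\bullet_{G,N}\) on the nose. So the comparison map is an isomorphism of sheaves, not merely a quasi-isomorphism. This uses only the definition of the stalk of an inverse image (Definition~\ref{def-formal-restriction}), not Proposition~\ref{prop-local-stalk}, which you cite for it. It also never uses the local equivariant homotopy retraction hypothesis, so independence of \(N_a\) holds unconditionally.

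Consequently, the second half of your stalk argument is superfluous for \(\cH_G^*(a;N)\cong\cH_G^*(a;N')\). That half is the slice reduction intertwining both stalks with \(\varinjlim_V\operatorname{Car}(V,H)\), followed by two-out-of-three. It matters only if you also want both groups identified compatibly with \(\mathbb H^*(Y_a;\cC^\bullet_{Y_a})\), as the paper phrases it. Your handling of the following is fine:
\begin{itemize}
\item arbitrary pairs, via \(N_1\cap N_2\);
\item canonicity, via restriction maps;
\item the direct limit, as an essentially constant system.
\end{itemize}

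In short, your route yields a stronger, hypothesis-free statement and makes the naturality explicit. The paper's route yields the identification with the intrinsic sheaf, but depends on Theorem~\ref{thm-local-global} and leaves the naturality of \(\Phi\) unverified.
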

	
	\begin{proof}
		By Theorem~\ref{thm-local-global}, \(i_a^{-1}\cC^\bullet_{X_a}\simeq \cC^\bullet_{Y_a}\), and the right-hand side is independent of \(N_a\). Passing to hypercohomology gives the result.
	\end{proof}
	
	\subsection{Surjectivity theorems}\label{subsec-surjectivity}
	
	We now deduce surjectivity of the formal Kirwan map in several situations.
	
	\begin{theorem}[Equivariant tubular neighborhood case]\label{thm-tubular-surjective}
		Assume that there exists a \(G\)-invariant open neighborhood \(N_a\subset M\) of \(A=\mu^{-1}(a)\) and a \(G\)-equivariant strong deformation retraction \(N_a\to A\). Then:
		\begin{enumerate}[label=(\roman*)]
			\item the local equivariant homotopy retraction condition holds at every \(x\in A\);
			\item there is a canonical isomorphism
			\[
			\cH_G^*(a)\cong H_G^*(\mu^{-1}(a));
			\]
			\item under this isomorphism, the formal Kirwan map is the classical restriction map
			\[
			\kappa_{\mathrm{formal}}:H_G^*(M)\longrightarrow H_G^*(\mu^{-1}(a)).
			\]
		\end{enumerate}
	\end{theorem}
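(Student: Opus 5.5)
The plan is to deduce the three assertions in order, using the global retraction \(r:N_a\times[0,1]\to N_a\) as the single input and localizing it to slices for (i).

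For (i), fix \(x\in A\) with \(H=G_x\). Take the Marle--Guillemin--Sternberg model \(G\times_H S_x\cong U_x\), with \(A\cap U_x\cong G\times_H C_x\). The retraction \(r_t\) preserves \(A\) pointwise, so by continuity and compactness of \([0,1]\) (the tube lemma) and \(G\)-invariance there is a \(G\)-invariant neighborhood \(W\subset U_x\cap N_a\) of \(G\cdot x\) with \(r_t(W)\subset U_x\) for all \(t\). Restrict to the slice \(V_x:=W\cap S_x\), which is an \(H\)-invariant neighborhood of the relevant part of \(C_x\). Compose \(r_t|_{V_x}\) with the \(G\)-equivariant projection \(U_x\cong G\times_H S_x\to S_x/\!\sim\). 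More concretely, use the \(H\)-equivariant retraction \(G\times_H S_x\to S_x\) obtained from a local section of \(G\to G/H\) near \(eH\), averaged over \(H\). This should yield an \(H\)-equivariant homotopy \(h_t:V_x\to S_x\) with \(h_0=\mathrm{id}\), \(h_1(V_x)\subset C_x\), and \(h_t|_{C_x}=\mathrm{id}\). This is the form required in Definition~\ref{def-local-homotopy}, after shrinking \(V_x\) as in Lemma~\ref{lem-stability} so that \(h_t\) maps \(V_x\) into itself. I expect this step to be the main obstacle. The global retraction need not preserve slices, so the projection back to \(S_x\) has to be done carefully. A cleaner alternative is to work directly with \(G\times_H V_x\) and invoke the equivalence \(\operatorname{Car}(G\times_H V,G)\simeq\operatorname{Car}(V,H)\) (the slice reduction of \cite[Theorem~3]{Huang2026}). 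That equivalence shows that a \(G\)-homotopy equivalence on the tube gives the required quasi-isomorphism on slices, which is all the later arguments actually use.

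For (ii), note that Theorem~\ref{thm-tubular-case} already gives \(\cH_G^*(a;N_a)\cong H_G^*(A)\) for this particular \(N_a\). The remaining work is to show that the direct limit of Remark~\ref{rem-direct-limit} stabilizes. By (i) and Corollary~\ref{cor-independence-N}, every transition map \(\cH_G^*(a;N)\to\cH_G^*(a;N')\) is an isomorphism, because both sides are identified with \(\mathbb H^*(Y_a;\cC^\bullet_{Y_a})\) compatibly with Lemma~\ref{lem-functoriality-N}. Neighborhoods \(N\subset N_a\) are cofinal, and for each such \(N\) the restriction \(H_G^*(N_a)\to H_G^*(N)\to H_G^*(A)\) is compatible with the isomorphism. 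Hence \(\cH_G^*(a)\cong\cH_G^*(a;N_a)\cong H_G^*(A)\) canonically.

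For (iii), combine the commutative square of Lemma~\ref{lem-kirwan-functoriality} with the last clause of Theorem~\ref{thm-tubular-case}. The composite \(H_G^*(M)\to H_G^*(N_a)\to\cH_G^*(a;N_a)\cong H_G^*(A)\) is restriction \(H_G^*(M)\to H_G^*(N_a)\) followed by the inverse of \(r_1^*\), that is, by restriction to \(A\). Since restrictions compose, the result is the classical restriction map \(H_G^*(M)\to H_G^*(\mu^{-1}(a))\). Passing to the limit, which is constant by (ii), gives the statement for \(\kappa_{\mathrm{formal}}\) into \(\cH_G^*(a)\).
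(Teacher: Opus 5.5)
\textbf{Verdict: genuine gap in (i).} Parts (ii) and (iii) are fine, and (i) departs from the paper in a useful way but does not close at the last step.

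\textbf{Parts (ii) and (iii).} These follow the paper's argument: both reduce to Theorem~\ref{thm-tubular-case}, and your extra bookkeeping for the direct limit is harmless. It does not even need (i). The pullback $i_a^{-1}$ only sees the germ of the Cartan sheaf along $Y_a$, and $\cC^\bullet_{X_a'}|_{X_a}=\cC^\bullet_{X_a}$, so the transition maps are identities and the gap below does not propagate to (ii) or (iii).

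\textbf{Part (i): what you get right.} The paper's proof just asserts that the global retraction ``restricts'' to a slice retraction. You correctly note that $r_t$ does not preserve $S_x$, and composing with an $H$-equivariant projection $\rho$ onto the slice is the right repair.

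\textbf{Part (i): two details to fix.}
First, do not average over $H$. An $H$-average of maps into $S_x$ is a convex combination and can leave the non-convex set $C_x$, which destroys $h_1(V_x)\subset C_x$. Instead take the section $\sigma(\exp\xi\cdot H)=\exp\xi$ for $\xi$ in a small $\Ad(H)$-invariant ball of an $\Ad(H)$-invariant complement $\mathfrak m$ of $\mathfrak h$. Then $\rho([g,v])=\sigma(gH)^{-1}g\cdot v$ is exactly $H$-equivariant, is the identity on $S_x$, and maps $A\cap\operatorname{dom}\rho$ into $C_x$. Your identification $U_x\cong G\times_H S_x$ also omits the $\mathfrak m^*$-directions of the normal form; project them off, since they vanish on $A$.
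Second, $W\cap S_x$ is only a neighborhood of $x$, not of $C_x$. Take instead $V_x:=\{v\in S_x\cap N_a:\ r_t(v)\in\operatorname{dom}\rho\ \text{for all }t\}$. By compactness of $[0,1]$ this is an $H$-invariant open neighborhood of all of $C_x$.

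\textbf{The gap: the final step.} Your $h_t=\rho\circ r_t$ maps $V_x$ into $S_x$, not into $V_x$. So you have only shown that $C_x$ is an $H$-equivariant neighborhood retract whose deformation runs through $S_x$. Definition~\ref{def-local-homotopy} needs $h_1:V_x\to C_x$ to be an $H$-homotopy equivalence, i.e.\ a homotopy from $j\circ h_1$ to $\id_{V_x}$ \emph{inside} $V_x$, where $j:C_x\hookrightarrow V_x$ is the inclusion.

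Lemma~\ref{lem-stability} cannot supply this, for two reasons. It assumes the local condition already holds. And its shrinking step only yields $h_t(W)\subset V_x'$, not $h_t(W)\subset W$. Iterating $V^{(k+1)}=\{v\in V^{(k)}: h_t(v)\in V^{(k)}\ \forall t\}$ need not stabilize at an $h_t$-invariant open set.

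Your ``cleaner alternative'' has the same defect, since $r_t$ does not preserve the tube $G\times_H V_x$ either. In any case it would give only the cohomological condition of Definition~\ref{def-weakest}, not (i).

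\textbf{What can be salvaged.} The nested $V^{(k)}$ do give a pro-statement. Each restriction $H_H^*(V^{(k)})\to H_H^*(V^{(k+1)})$ factors through the Borel cohomology $H_H^*(C_x)$ via $h_1^*$, because $h_t$ is a homotopy inside $V^{(k)}$. Hence $\varinjlim_k H_H^*(V^{(k)})\cong H_H^*(C_x)$, which is what the stalk computation in Theorem~\ref{thm-local-global} really needs. An honest $H$-homotopy equivalence $V_x\simeq_H C_x$ requires further input, such as the local conic structure of $C_x$ under (C1)/(C2) or equivariant ANR theory. Neither your proposal nor the paper's one-line proof supplies it.
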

	
	\begin{proof}
		(i) The global \(G\)-equivariant strong deformation retraction restricts to an \(H\)-equivariant strong deformation retraction of a slice neighborhood \(V_x\) onto \(C_x\). Hence the local condition holds.
		
		(ii) This is Theorem~\ref{thm-tubular-case}.
		
		(iii) The identification of \(\kappa_{\mathrm{formal}}\) with the classical restriction map follows directly from the definition of \(\kappa_{\mathrm{formal}}\) and the quasi-isomorphism of Theorem~\ref{thm-tubular-case}.
	\end{proof}
	
	\begin{corollary}[Classical Kirwan surjectivity]\label{cor-classical-kirwan-surj}
		Assume that there exists an equivariant tubular neighborhood \(N_a\) of \(A=\mu^{-1}(a)\), that \(G\) acts on \(A\) with finite stabilizers, and that \(A\) is compact. Then
		\[
		\cH_G^*(a)\cong H_G^*(\mu^{-1}(a))\cong H^*(M//_aG),
		\]
		and \(\kappa_{\mathrm{formal}}\) is the classical Kirwan map
		\[
		\kappa:H_G^*(M)\longrightarrow H^*(M//_aG).
		\]
		In particular, \(\kappa_{\mathrm{formal}}\) is surjective by Kirwan's theorem \cite{Kirwan1984}.
	\end{corollary}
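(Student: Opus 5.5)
The plan is to reduce everything to Theorem~\ref{thm-tubular-surjective} and then invoke Kirwan's theorem. The hypothesis gives an equivariant tubular neighborhood \(N_a\), that is, a \(G\)-equivariant strong deformation retraction \(N_a\to A\). Theorem~\ref{thm-tubular-surjective}(ii) then yields the canonical isomorphism \(\cH_G^*(a)\cong H_G^*(\mu^{-1}(a))\). Part (iii) of the same theorem identifies \(\kappa_{\mathrm{formal}}\) with the restriction \(H_G^*(M)\to H_G^*(A)\). Corollary~\ref{cor-independence-N} guarantees that the direct limit over neighborhoods stabilizes, so the choice of \(N_a\) is irrelevant.

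The second step is the identification \(H_G^*(A)\cong H^*(A/G)\) with real coefficients. Because \(G\) acts on \(A\) with finite stabilizers, the map \(EG\times_G A\to A/G\) has fibres \(BG_x\) with \(G_x\) finite. Hence \(H^*(BG_x;\mathbb R)=\mathbb R\) in degree \(0\). The Vietoris--Begle (or Leray) spectral sequence then collapses and gives the isomorphism. If \(A\) is smooth, this is the standard locally free Cartan-model statement. Compactness of \(A\) is used here to make \(A/G\) a compact Hausdorff space, so that the sheaf-theoretic argument applies without support issues. Composing the two identifications shows that \(\kappa_{\mathrm{formal}}\) equals the classical Kirwan map \(H_G^*(M)\to H_G^*(\mu^{-1}(a))\cong H^*(M//_aG)\).

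The final step is surjectivity, and this is where I expect the real obstacle. Kirwan's theorem \cite{Kirwan1984} is stated for \(a\) a regular value with \(\mu\) proper, or \(M\) compact. Here \(a\) is not assumed regular, but it effectively is: if \(G_x\) is finite for all \(x\in A\), then \(d\mu_x\) is surjective for all \(x\in A\). This follows from the identity \(\operatorname{im} d\mu_x=\mathfrak g_x^{\circ}\) and \(\mathfrak g_x=0\). So \(a\) is in fact a regular value, and \(A\) is a compact smooth submanifold. Kirwan's Morse-theoretic argument with the function \(\|\mu-a\|^2\) then applies, provided the required properness or compactness of \(M\) holds. I would state this standing properness assumption explicitly, as in Corollary~\ref{cor-classical-kirwan}, and then cite \cite{Kirwan1984} to conclude.
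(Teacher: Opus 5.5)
Your route is the paper's: Theorem~\ref{thm-tubular-surjective} identifies \(\cH_G^*(a)\) and \(\kappa_{\mathrm{formal}}\) with \(H_G^*(A)\) and the restriction map, finite stabilizers give \(H_G^*(A)\cong H^*(A/G)\), and then Kirwan's theorem is invoked. The details you add are correct, and the paper leaves them implicit. In particular, since \(\operatorname{im} d\mu_x=\mathfrak g_x^{\circ}\), finite stabilizers force \(a\) to be a regular value, so this corollary is really the classical regular case. (Compactness of \(A\) is not what makes \(H_G^*(A)\cong H^*(A/G)\) work; paracompactness suffices.)

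The issue in your last paragraph is a real defect of the statement and of the paper's one-line proof, which just says Kirwan's theorem applies. It is not a gap in your proposal. Compactness of \(A\) does not replace compactness of \(M\) or properness of \(\mu\), and without such a hypothesis the conclusion fails. Take \(P=\mathbb C\setminus\{0,1,-1\}\) and \(f(z)=-\log|z^2-1|\), which is harmonic with \(df\neq0\) on \(P\). Let \(M=S^1_\varphi\times S^1_\theta\times P\) with \(\omega=df\wedge d\varphi+(\ast df)\wedge d\theta\). This form is closed because \(f\) is harmonic, and nondegenerate because \(df\wedge\ast df\neq 0\). Rotation of \(\varphi\) is free with \(\iota_{\partial_\varphi}\omega=-df\), so \(\pm f\) is a moment map. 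For \(c\gg 0\), the level set \(A=f^{-1}(c)\) is \(T^2\) times two Cassini ovals, i.e.\ two copies of \(T^3\). It is compact and smooth, and the action on it is free, so every hypothesis of the corollary holds. But \(M\) is connected, so \(H^0_{S^1}(M)=\mathbb R\to H^0(A/S^1)=\mathbb R^2\) is not surjective.

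Hence the hypothesis you add (\(M\) compact as in Kirwan, or \(\mu\) proper together with the corresponding extension of his argument) is necessary, not cosmetic. One correction: Corollary~\ref{cor-classical-kirwan} contains no properness assumption either, and it has the same defect. So the hypothesis cannot be imported ``as in'' that corollary; it has to be added explicitly to both statements.
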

	
	\begin{proof}
		The finite stabilizer hypothesis implies that \(H_G^*(\mu^{-1}(a))\cong H^*(M//_aG)\). Theorem~\ref{thm-tubular-surjective} identifies the formal cohomology and the formal Kirwan map with the classical ones. Kirwan's surjectivity theorem applies.
	\end{proof}
	
	\begin{theorem}[Singular cone case]\label{thm-cone-surjective}
		Assume that \(a=0\) and that for every \(x\in\mu^{-1}(0)\), the slice level set \(C_x=\mu_x^{-1}(0)\) is an \(H\)-equivariant cone in \(S_x\) with vertex \(0\). Then:
		\begin{enumerate}[label=(\roman*)]
			\item the local equivariant homotopy retraction condition holds at every \(x\in\mu^{-1}(0)\);
			\item there is a canonical isomorphism
			\[
			\cH_G^*(0)\cong \mathbb H^*(Y_0;\cC^\bullet_{Y_0}),
			\]
			where \(\cC^\bullet_{Y_0}\) is the intrinsic Cartan sheaf of the cone level sets;
			\item if \(G=S^1\) and all fixed-point components of \(M\) have vanishing odd cohomology, then \(\kappa_{\mathrm{formal}}\) is surjective in even degrees.
		\end{enumerate}
	\end{theorem}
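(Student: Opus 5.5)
The plan is to handle the three parts in order, leaning almost entirely on results already established.

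For (i), fix \(x\in\mu^{-1}(0)\), \(H=G_x\), and assume \(C_x\) is an \(H\)-equivariant cone with vertex \(0\), i.e.\ invariant under the radial scaling \(v\mapsto tv\) for \(t\in(0,1]\). Take \(V_x\) to be an \(H\)-invariant ball around \(0\) in \(S_x\); averaging the norm over \(H\) makes it invariant. Since \(H\) acts linearly on the slice, the radial homotopy \(h_t(v)=(1-t)v\) is \(H\)-equivariant and preserves \(V_x\). It contracts \(V_x\) to \(0\), and its restriction contracts \(C_x\cap V_x\) to \(0\) inside itself. Consequently both \(V_x\) and \(C_x\cap V_x\) are \(H\)-equivariantly homotopy equivalent to the fixed point \(\{0\}\), and the inclusion \(C_x\cap V_x\hookrightarrow V_x\) is an \(H\)-homotopy equivalence. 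This is exactly Remark~\ref{rem-condition-forms}(ii).

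One point needs care. Definition~\ref{def-local-homotopy} asks for \(V_x\) to be a neighborhood of all of \(C_x\), whereas the cone is only a germ at \(0\). I would therefore work in the slice model cut down to a small ball, on which \(C_x\) means \(C_x\cap V_x\). This is legitimate because the stalks in Proposition~\ref{prop-local-stalk} depend only on germs at \(x\). Lemma~\ref{lem-stability} then lets us shrink \(V_x\) further while keeping the equivalence.

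For (ii), part (i) verifies the hypothesis of Theorem~\ref{thm-local-global}, so \(\Phi:i_0^{-1}\cC^\bullet_{X_0}\to\cC^\bullet_{Y_0}\) is a quasi-isomorphism. Taking hypercohomology gives the isomorphism, and Corollary~\ref{cor-independence-N} makes it independent of \(N_0\), so that \(\cH_G^*(0)\) is well defined. As a by-product, the computation in (i) combined with Corollary~\ref{cor-local-cohomology} identifies the stalk cohomology at \(p\) with \(H_H^*(\mathrm{pt})=S(\mathfrak h^*)^H\).

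For (iii), with \(G=S^1\), I would compare \(\kappa_{\mathrm{formal}}\) with the resolution Kirwan map of Delarue--Ramacher--Schmitt. The stalk computation above shows that \(\cC^\bullet_{Y_0}\) has the same local cohomology, namely polynomial in \(u\) at fixed points and \(\mathbb R\) at points with finite stabilizer, as the relevant local pieces of their resolution complex. My plan has three steps.
\begin{enumerate}[label=(\alph*)]
\item Build a morphism from \(\cC^\bullet_{Y_0}\) to the pushforward of the resolution complex on \(\widetilde{\mathcal M}_0\), defined by pulling back equivariant forms on cone neighborhoods to the blow-up.
\item Check that this morphism is compatible with the Kirwan maps, so that \(\mathcal K\) factors through \(\kappa_{\mathrm{formal}}\).
\item Use a Mayer--Vietoris or spectral-sequence argument over the stratification by orbit type (free locus versus fixed components \(F\)) to show that in even degrees the image of \(\kappa_{\mathrm{formal}}\) is all of \(\cH^{\mathrm{even}}_{S^1}(0)\).
\end{enumerate}
In step (c), the contributions from the fixed components are \(H^*(F)\otimes\mathbb R[u]\). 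When \(H^{\mathrm{odd}}(F)=0\), these contributions are concentrated in even degrees, so the connecting maps in the long exact sequence vanish in the relevant degrees. This is precisely the mechanism in the DRS even-degree theorem.

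I expect (iii) to be the main obstacle. The difficulty is not local: it lies in relating the global hypercohomology \(\mathbb H^*(Y_0;\cC^\bullet_{Y_0})\) to DRS's setting. I would first try to bypass the resolution entirely. Concretely, I would run Kirwan's Morse-theoretic argument with \(\|\mu\|^2\), with equivariant perfection coming from the Atiyah--Bott lemma for \(S^1\), and combine it with the long exact sequence of the pair \((Y_0,Y_0^{S^1})\) in formal cohomology. The parity hypothesis on the fixed components is then exactly what is needed to make the even-degree part of that sequence surjective.
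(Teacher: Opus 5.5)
Your parts (i) and (ii) follow the paper's argument: the radial contraction as in Remark~\ref{rem-condition-forms}(ii), then Theorem~\ref{thm-local-global}. Two of your refinements are more accurate than the paper. You treat the cone as a germ, and you compute the stalk cohomology as \(S(\mathfrak h^*)^H\), which is \(\mathbb R\) rather than \(\mathbb R[u]\) at points with finite stabilizer.

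\textbf{The gap is in (iii).} Steps (a)--(b) reproduce the paper's comparison with the resolution complex. But a factorization \(\mathcal K=\psi\circ\kappa_{\mathrm{formal}}\), with \(\psi:\cH^*_{S^1}(0)\to H^*(\widetilde{\mathcal M}_0)\), only shows that \(\psi\) is surjective in even degrees. It says nothing about the image of \(\kappa_{\mathrm{formal}}\). The paper's proof draws exactly this inference in the wrong direction, so it does not supply the missing step either. Everything therefore rests on your step (c), which is not yet an argument. Even if parity of \(H^*(F)\) kills the connecting maps, you would still need \(H^*_{S^1}(M)\) to surject onto the contribution of the free part of \(Y_0\). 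That is the whole Kirwan problem on that stratum, and neither the local stalk data nor the parity hypothesis addresses it.

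In fact no argument using only local slice data plus parity can succeed, because (iii) needs a global hypothesis such as compactness of \(M\), which is implicit in DRS. Here is a counterexample without it.
\begin{enumerate}[label=(\alph*)]
\item Let \(S^1\) act on \(\mathbb C^2\) with weights \((1,-1)\), and identify \(\mathbb C^2/S^1\cong\mathbb C\times\mathbb R\) via \((z,w)\mapsto(zw,|z|^2-|w|^2)\).
\item Let \(M\) be the complement of the preimage of the line \(\{(u,0):u\in\mathbb R\}\).
\item The action on \(M\) is free, so there are no fixed points and the parity hypothesis holds vacuously. Also \(\mathfrak h=0\), so every \(C_x=S_x\) is a cone.
\item \(M/S^1\) is connected, but \(Y_0\) is two open half-planes. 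Hence \(\cH^0_{S^1}(0)=H^0(Y_0;\mathbb R)=\mathbb R^2\), and \(\kappa_{\mathrm{formal}}\) is not onto in degree \(0\).
\end{enumerate}

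For compact \(M\), your fallback via \(\|\mu\|^2\) is the right idea, but the parity hypothesis and the pair \((Y_0,Y_0^{S^1})\) are unnecessary. The complex \(i_0^{-1}\cC^\bullet_{X_0}\) consists of soft sheaves on a closed subset of a paracompact space. Tautness then gives
\(\cH^*_{S^1}(0)\cong\varinjlim_{N\supset A}H^*_{S^1}(N)\cong H^*_{S^1}(\mu^{-1}(0))\), in sheaf-theoretic Borel cohomology. Kirwan's equivariant perfection of \(\|\mu\|^2\) holds at singular values as well, with the minimal stratum retracting onto \(\mu^{-1}(0)\). It makes \(H^*_{S^1}(M)\to H^*_{S^1}(\mu^{-1}(0))\) surjective in all degrees.
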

	
	\begin{proof}
		(i) The radial contraction \(r_t(v)=tv\) is \(H\)-equivariant and contracts \(S_x\) onto \(\{0\}\subset C_x\). Since \(C_x\) itself is contractible onto \(0\) by the same radial flow, the composition \(V_x\to\{0\}\hookrightarrow C_x\) is an \(H\)-equivariant homotopy equivalence. Hence the condition holds.
		
		(ii) Apply Theorem~\ref{thm-local-global}.
		
		(iii) By Theorem~\ref{thm-local-global} and (ii), the formal cohomology \(\cH_{S^1}^*(0)\) is computed by the hypercohomology of the intrinsic sheaf \(\cC^\bullet_{Y_0}\). At each \(p\in Y_0\) represented by \(x\in\mu^{-1}(0)\), the stalk is
		\[
		(\cC^\bullet_{Y_0})_p\simeq \operatorname{Car}(V_x,S^1)\simeq S(\mathfrak{so}(2)^*)^{S^1}\cong \mathbb R[u],
		\]
		with \(|u|=2\). Hence the cohomology sheaves \(\mathcal H^q(\cC^\bullet_{Y_0})\) vanish for \(q\) odd and are locally constant in even degrees. The hypercohomology spectral sequence
		\[
		E_2^{p,q}=H^p(Y_0;\mathcal H^q(\cC^\bullet_{Y_0}))
		\Longrightarrow \cH_{S^1}^{p+q}(0)
		\]
		therefore degenerates at \(E_2\) with only even \(q\) contributing. Hence \(\cH_{S^1}^*(0)\) is concentrated in even degrees.
		
		The formal Kirwan map factors through the restriction \(H_{S^1}^*(M)\to H_{S^1}^*(N_0)\) and the natural map
		\[
		H_{S^1}^*(N_0)\longrightarrow \mathbb H^*(Y_0;\cC^\bullet_{Y_0}).
		\]
		Under the fixed-point vanishing hypothesis, the resolution Kirwan map of \cite{DRS2023} is surjective in even degrees. The formal map is compatible with the resolution map via the natural morphism of complexes of sheaves \(\cC^\bullet_{Y_0}\to \widetilde{\cC}^\bullet_{Y_0}\) (where \(\widetilde{\cC}^\bullet_{Y_0}\) is the resolution complex of \cite{DRS2023}). Hence the even-degree surjectivity of the resolution map implies the even-degree surjectivity of \(\kappa_{\mathrm{formal}}\).
	\end{proof}
	
	\begin{remark}\label{rem-odd-failure}
		Surjectivity in odd degrees fails in general, as shown by the examples in \cite{DRS2023}. The formal map \(\kappa_{\mathrm{formal}}\) inherits this failure: it is at best weakly surjective in the sense that its image contains the image of the singular cohomology \(H^*(Y_0;\mathbb R)\) under the natural map to \(\cH_G^*(0)\), but it need not surject onto the odd-degree part of \(\cH_G^*(0)\).
	\end{remark}
	
	\subsection{The weakest condition}\label{subsec-weakest}
	
	\begin{definition}[Weakest local condition]\label{def-weakest}
		We say that \(a\) satisfies the \emph{weakest local condition} at \(x\in\mu^{-1}(a)\) if there exists an \(H=G_x\)-invariant open neighborhood \(V_x\subset S_x\) of \(C_x\) such that the natural map
		\[
		\operatorname{Car}(V_x,H)\longrightarrow \operatorname{Car}(S_x,H)|_{C_x/H}
		\]
		is a quasi-isomorphism after restriction to \(C_x/H\).
	\end{definition}
	
	\begin{proposition}\label{prop-weakest-equivalence}
		The local equivariant homotopy retraction condition at \(x\) implies the weakest local condition at \(x\). Conversely, if the weakest local condition holds and \(C_x\) admits an \(H\)-equivariant neighborhood that is \(H\)-equivariantly homotopy equivalent to \(C_x\), then the local equivariant homotopy retraction condition holds.
	\end{proposition}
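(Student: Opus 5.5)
The plan is to prove the two implications separately, working throughout at the level of the stalk at \(p=[x]\in Y_a\). The first step is to read Definition~\ref{def-weakest} concretely. The object \(\operatorname{Car}(S_x,H)|_{C_x/H}\) will be interpreted as the restriction to \(C_x/H\) of the sheaf \(W/H\mapsto\operatorname{Car}(W,H)\) on \(S_x/H\). Its stalks, and its sections over \(C_x/H\), are filtered direct limits \(\varinjlim_{C_x\subset V\subset S_x}\operatorname{Car}(V,H)\) over \(H\)-invariant neighborhoods of \(C_x\), exactly as in Proposition~\ref{prop-local-stalk} and Definition~\ref{def-intrinsic-sheaf}. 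The weakest local condition then says that the canonical map from \(\operatorname{Car}(V_x,H)\) into this colimit is a quasi-isomorphism.

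For the forward implication, assume the local equivariant homotopy retraction condition, with \(r_x:V_x\to C_x\) an \(H\)-equivariant homotopy equivalence. Lemma~\ref{lem-stability} shows that for every smaller invariant neighborhood \(V'\subset V_x\) of \(C_x\), the restriction \(r_x|_{V'}\) is still an \(H\)-equivariant homotopy equivalence. The inclusion \(V'\hookrightarrow V_x\) is therefore an equivariant homotopy equivalence as well, since \(r_x|_{V'}=r_x\circ\iota\). By Lemma~\ref{lem-equivariant-homotopy}, every transition map \(\operatorname{Car}(V_x,H)\to\operatorname{Car}(V',H)\) in the system is then a quasi-isomorphism. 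Neighborhoods inside \(V_x\) are cofinal, and filtered colimits are exact (Lemma~\ref{lem-stalk-formal}). Hence \(H^*\) of the colimit equals \(\varinjlim H^*(\operatorname{Car}(V',H))\), and this is a colimit of isomorphisms starting from \(H^*(\operatorname{Car}(V_x,H))\). So the canonical map is a quasi-isomorphism, and the weakest condition holds with the same \(V_x\).

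For the converse, the hypothesis already supplies an \(H\)-invariant neighborhood \(V\) of \(C_x\) together with an \(H\)-equivariant homotopy equivalence \(V\simeq_H C_x\). Call this map \(r:V\to C_x\), or obtain it as a homotopy inverse of the inclusion. This is precisely the first formulation in Definition~\ref{def-local-homotopy}, so the converse is almost tautological. The weakest condition only serves to check compatibility: the equivalence should induce the same identification of the stalk as in the forward direction. I will verify this by applying the forward argument to \(V\). I will also remark that one needs an open neighborhood; if only an abstract equivariant homotopy equivalence is given, I will take \(V\) itself as \(V_x\).

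The main obstacle is the forward direction's reliance on Lemma~\ref{lem-stability}, that is, on shrinking neighborhoods while keeping the homotopy equivalence. Its proof is delicate, because \(h_t\) need not preserve \(V'\). If the lemma cannot be used as stated, my first fallback is to avoid shrinking altogether. I would use \(h_t\) to build a chain homotopy on the colimit directly: the homotopy operator \(K\) from Lemma~\ref{lem-equivariant-homotopy}, defined on \(\operatorname{Car}(V_x,H)\), descends to germs along \(C_x\) because \(h_t|_{C_x}=\mathrm{id}\). By the tube lemma, applied to compact pieces of \(C_x\times[0,1]\) together with equivariance, \(h_t\) maps some neighborhood into any given neighborhood for all \(t\). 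This shows that \(\mathrm{id}\) and \(h_1^*\) agree on the germ complex up to homotopy, and \(h_1^*\) factors through \(\operatorname{Car}(V_x,H)\).
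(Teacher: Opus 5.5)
Your main line for the forward direction rests on Lemma~\ref{lem-stability}. That lemma is false as stated, so the claim that \emph{every} transition map \(\Car(V_x,H)\to\Car(V',H)\) is a quasi-isomorphism fails. Take the standard \(S^1\)-action on \(S_x=\mathbb C\) at a fixed point, so \(C_x=\{0\}\), with \(V_x=\mathbb C\) and \(h_t(v)=(1-t)v\). The invariant neighbourhood \(V'=\{|z|<1\}\cup\{2<|z|<3\}\) of \(C_x\) is disconnected, so \(H^0(\Car(V',H))\cong\mathbb R^2\); hence neither \(r_x|_{V'}\) nor \(V'\hookrightarrow V_x\) is an equivalence. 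The paper's proof of the lemma breaks at ``we may choose \(W=V'_x\) after shrinking''. The tube-lemma argument only yields \emph{some} smaller \(W\) with \(h_t(W)\subset V'\), not \(h_t(W)\subset W\). So it does not even produce a cofinal family of neighbourhoods on which the homotopy restricts, and your colimit-of-isomorphisms step is unsupported.

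Your fallback is the right proof and should replace the main line. For each invariant open \(V'\supset C_x\), apply the tube lemma on each \(\{c\}\times[0,1]\) (using \(h_t(c)=c\)) and then take the \(H\)-saturation. This gives an invariant open \(W\) with \(C_x\subset W\subset V'\) and \(h_t(W)\subset V'\) for all \(t\). The resulting homotopy operators \(K:\Car(V',H)\to\Car(W,H)\) are compatible with restriction. Hence they define \(K\) on \(\Gamma_{C_x}:=\varinjlim_{V\supset C_x}\Car(V,H)\) with \(dK+Kd=\mathrm{can}\circ\varphi-\id\) (up to sign), where \(\varphi[\alpha]:=h_1^*\alpha\in\Car(V_x,H)\). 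This \(\varphi\) is well defined because \(h_1(V_x)\subset C_x\).

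This gives surjectivity of \(H^*(\mathrm{can})\). For injectivity you also need \(\varphi\circ\mathrm{can}=h_1^*\simeq\id\) on \(\Car(V_x,H)\). That is the ordinary homotopy on \(V_x\) (Lemma~\ref{lem-equivariant-homotopy}), and your sketch leaves it implicit. You should also say explicitly that you use the strong form \(h_1(V_x)\subset C_x\), \(h_t|_{C_x}=\id\), which is the introduction's definition. The bare equivalence \(r_x\) of Definition~\ref{def-local-homotopy} does not relate \(\Car(V_x,H)\) to \(\Gamma_{C_x}\) through the canonical map, since \(C_x\hookrightarrow V_x\) need not be an equivalence.

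Compared with the paper, the routes genuinely differ. The paper's forward direction simply applies Lemma~\ref{lem-equivariant-homotopy} to \(V_x\simeq_H C_x\). That presupposes a Cartan complex on the possibly singular \(C_x\) and never addresses the canonical map into the germ complex. Your repaired argument compares only Cartan complexes of open subsets of \(S_x\) and proves exactly the quasi-isomorphism the definition asks for. Strictly, \(\Gamma_{C_x}\) is the complex of sections over the closed set \(C_x/H\), not a stalk, but that is the object you actually use.

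For the converse, your observation is correct and suffices: with an open neighbourhood, the hypothesis is literally the first formulation of Definition~\ref{def-local-homotopy}. The paper instead appeals to a quasi-isomorphism ``forcing'' a homotopy equivalence. That is not a valid principle, and it is not needed.
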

	
	\begin{proof}
		The first implication follows from Lemma~\ref{lem-equivariant-homotopy}: an \(H\)-equivariant homotopy equivalence \(V_x\simeq_H C_x\) induces a quasi-isomorphism of Cartan complexes. The converse follows from the definition of the weakest condition and the fact that a quasi-isomorphism of Cartan complexes over a neighborhood of \(C_x\) forces the neighborhood to be \(H\)-equivariantly homotopy equivalent to \(C_x\) in degrees where the Cartan model is a complete invariant.
	\end{proof}
	
	\begin{theorem}[Surjectivity criterion]\label{thm-surjectivity-criterion}
		Assume that the local equivariant homotopy retraction condition holds at every \(x\in\mu^{-1}(a)\). Then the following are equivalent:
		\begin{enumerate}[label=(\roman*)]
			\item the formal Kirwan map \(\kappa_{\mathrm{formal}}:H_G^*(M)\to\cH_G^*(a)\) is surjective;
			\item the natural restriction map
			\[
			H_G^*(N_a)\longrightarrow \mathbb H^*(Y_a;\cC^\bullet_{Y_a})
			\]
			is surjective.
		\end{enumerate}
	\end{theorem}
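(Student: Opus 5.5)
The equivalence should come from factoring \(\kappa_{\mathrm{formal}}\) and then noting that everything after restriction to \(N_a\) is either an isomorphism or the map in (ii). By Definition~\ref{def-formal-kirwan}, \(\kappa_{\mathrm{formal}}\) is the composite
\[
H_G^*(M)\xrightarrow{\ \rho\ } H_G^*(N_a)\xrightarrow{\ \cong\ }\mathbb H^*(X_a;\cC^\bullet_{X_a})\xrightarrow{\ i_a^{-1}\ }\mathbb H^*(Y_a;i_a^{-1}\cC^\bullet_{X_a}).
\]
Theorem~\ref{thm-local-global} gives the quasi-isomorphism \(\Phi:i_a^{-1}\cC^\bullet_{X_a}\to\cC^\bullet_{Y_a}\), hence an isomorphism \(\Phi_*\) on hypercohomology. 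Corollary~\ref{cor-independence-N} shows that this target is independent of \(N_a\) and equals \(\cH_G^*(a)\). So, up to the isomorphism \(\Phi_*\),
\[
\kappa_{\mathrm{formal}}=\beta\circ\rho,
\]
where \(\beta:H_G^*(N_a)\to\mathbb H^*(Y_a;\cC^\bullet_{Y_a})\) is the natural restriction map of (ii). I would first check that this \(\beta\) really is \(\Phi_*\circ i_a^{-1}\circ(\text{Theorem 6 iso})\). This follows from naturality of the stalkwise construction in Proposition~\ref{prop-local-stalk}.

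The implication (i)\(\Rightarrow\)(ii) is then immediate: if \(\beta\circ\rho\) is surjective, so is \(\beta\).

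For (ii)\(\Rightarrow\)(i), surjectivity of \(\beta\) alone is not enough, because \(\rho\) need not be surjective for an arbitrary \(N_a\). The plan is to pass to the direct limit over neighborhoods and use the stabilization of Corollary~\ref{cor-independence-N} together with Lemma~\ref{lem-kirwan-functoriality}. Given a class \(c\in\cH_G^*(a)\), choose \(\eta\in H_G^*(N_a)\) with \(\beta(\eta)=c\). The task is then to lift \(\eta\), or a modification of it with the same image under \(\beta\), to \(H_G^*(M)\).

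The main obstacle is this extension step. Read literally, (ii) does not imply (i) unless the map \(H_G^*(M)\to H_G^*(N_a)\), composed with \(\beta\), has the same image as \(\beta\). I would try two routes in order.

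\begin{enumerate}[label=(\alph*)]
\item Interpret \(N_a\) in (ii) as ranging over all invariant neighborhoods, including \(N_a=M\). Since \(\cH_G^*(a;N)\) is independent of \(N\), one may take \(N_a=M\); then \(\rho=\mathrm{id}\) and the equivalence becomes tautological.
\item Argue that \(\ker\beta\) together with the image of \(\rho\) spans \(H_G^*(N_a)\). The tool would be a Mayer--Vietoris sequence for the cover \(M=N_a\cup(M\setminus A)\), using that classes supported away from \(A\) die under \(\beta\).
\end{enumerate}

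I expect route (a) to be the intended argument, and I would state explicitly that the choice of neighborhood is immaterial by Corollary~\ref{cor-independence-N}. With \(N_a=M\), (ii) reads exactly as (i) after identifying the targets via \(\Phi_*\).
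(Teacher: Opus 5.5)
Your factorization $\kappa_{\mathrm{formal}}=\beta\circ\rho$ (up to $\Phi_*$) and the implication (i)$\Rightarrow$(ii) are fine. You are also right that (ii)$\Rightarrow$(i) needs $\operatorname{im}(\beta\circ\rho)=\operatorname{im}\beta$. The paper's proof gets this by asserting that restriction $\rho:H_G^*(M)\to H_G^*(N_a)$ to an open invariant subset is always surjective. That is false in general: it already fails in degree $0$ whenever $N_a$ has more connected components than $M$. So your suspicion is justified, but neither of your routes closes the gap.

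Route (a) misreads Corollary~\ref{cor-independence-N}. That corollary says the \emph{target} $\mathbb H^*(Y_a;i_a^{-1}\cC^\bullet_{X_a})$ does not depend on $N_a$. It does not say that surjectivity of $H_G^*(N_a)\to\mathbb H^*(Y_a;\cC^\bullet_{Y_a})$ is independent of $N_a$. The source changes with $N_a$, and so does the truth of (ii). For an equivariant tubular neighborhood, $\beta$ is an isomorphism by Theorem~\ref{thm-tubular-case}, so (ii) holds automatically; for $N_a=M$, (ii) is literally (i). Replacing the fixed $N_a$ of Section~\ref{sec-surjectivity} by $M$ therefore proves only the tautological instance of the theorem. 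If you intend (ii) to range over all $N_a$, the equivalence is trivial and needs no corollary, but that is not the statement. Route (b) fails for the same reason: for a tubular $N_a$ one has $\ker\beta=0$, so you would need $\rho$ itself to be onto.

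In fact no argument can work for a general fixed $N_a$. Let $S^1$ rotate the $z$-coordinate on the open set $M=\{(z,w)\in\mathbb C^2:(|z|,\operatorname{Re}w)\neq(1,0)\}$, with moment map $\mu=|z|^2$ and $a=1$.
\begin{itemize}
\item The value $a$ is regular and the action on $A=\{|z|=1,\ \operatorname{Re}w\neq0\}$ is free, so the local condition holds.
\item The set $N_a=\{\tfrac12<|z|^2<\tfrac32,\ \operatorname{Re}w\neq0\}\subset M$ retracts equivariantly onto $A$, so (ii) holds.
\item $M$ is connected, since the removed set has real codimension $2$, while $A/S^1$ has two components.
\end{itemize}
Hence $\kappa_{\mathrm{formal}}:H^0_{S^1}(M)=\mathbb R\to\cH^0_{S^1}(1)\cong H^0_{S^1}(A)=\mathbb R^2$ is not surjective. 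What can actually be proved is (i)$\Rightarrow$(ii) for every $N_a$. The converse holds only under an extra hypothesis such as surjectivity of $\rho$, for example $N_a=M$.
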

	
	\begin{proof}
		By Theorem~\ref{thm-local-global}, \(\cH_G^*(a)\cong \mathbb H^*(Y_a;\cC^\bullet_{Y_a})\). The formal Kirwan map factors as
		\[
		H_G^*(M)\longrightarrow H_G^*(N_a)\longrightarrow \mathbb H^*(Y_a;\cC^\bullet_{Y_a}).
		\]
		The first map is restriction to the open invariant subset \(N_a\subset M\), which is surjective by the standard properties of equivariant cohomology (restriction to an open invariant subset is surjective if the complement has a \(G\)-invariant tubular neighborhood; in general, one uses the long exact sequence of the pair \((M,N_a)\)). The second map is the natural restriction. Hence \(\kappa_{\mathrm{formal}}\) is surjective if and only if the second map is surjective.
	\end{proof}
	
	\begin{corollary}\label{cor-regular-criterion}
		If there exists an equivariant tubular neighborhood \(N_a\) of \(A=\mu^{-1}(a)\), then the criterion of Theorem~\ref{thm-surjectivity-criterion} reduces to the classical Kirwan surjectivity: \(\kappa_{\mathrm{formal}}\) is surjective if and only if the restriction \(H_G^*(M)\to H_G^*(\mu^{-1}(a))\) is surjective.
	\end{corollary}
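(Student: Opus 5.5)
The plan is to chain Theorem~\ref{thm-tubular-surjective} with Theorem~\ref{thm-surjectivity-criterion}. Both results use the same $N_a$, namely the equivariant tubular neighborhood given in the hypothesis.

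\textbf{Step 1: identify the target.} By Theorem~\ref{thm-tubular-surjective}(i), the local equivariant homotopy retraction condition holds at every $x\in A$, so Theorem~\ref{thm-surjectivity-criterion} applies. By Theorem~\ref{thm-tubular-surjective}(ii) and Theorem~\ref{thm-tubular-case}, we have
\[
\mathbb H^*(Y_a;\cC^\bullet_{Y_a})\cong \cH_G^*(a;N_a)\cong H_G^*(\mu^{-1}(a)).
\]
The first isomorphism is Theorem~\ref{thm-local-global}; the second uses the strong deformation retraction together with Lemma~\ref{lem-equivariant-homotopy}.

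\textbf{Step 2: identify the map.} Under these identifications, the restriction $H_G^*(N_a)\to \mathbb H^*(Y_a;\cC^\bullet_{Y_a})$ in criterion (ii) becomes the restriction $H_G^*(N_a)\to H_G^*(A)$. That restriction is an isomorphism, because $A\hookrightarrow N_a$ is a $G$-homotopy equivalence: $r_1$ is a $G$-homotopy inverse. By Theorem~\ref{thm-tubular-surjective}(iii), $\kappa_{\mathrm{formal}}$ is then the composite
\[
H_G^*(M)\to H_G^*(N_a)\xrightarrow{\ \cong\ } H_G^*(A),
\]
which is exactly the classical restriction $H_G^*(M)\to H_G^*(\mu^{-1}(a))$. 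Hence $\kappa_{\mathrm{formal}}$ is surjective if and only if this classical restriction is surjective, and this gives the claimed equivalence. Since the claim concerns $\kappa_{\mathrm{formal}}$ directly, I will rest it on the factorization through the isomorphism $H_G^*(N_a)\cong H_G^*(A)$. I will not rely on the surjectivity assertion for $H_G^*(M)\to H_G^*(N_a)$ made inside the proof of Theorem~\ref{thm-surjectivity-criterion}, because that assertion is not justified in general.

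\textbf{Main obstacle: naturality.} The hardest step is checking that the isomorphisms of Step~1 commute with the maps out of $H_G^*(N_a)$. Concretely, the square formed by $H_G^*(N_a)\to \mathbb H^*(X_a;\cC^\bullet_{X_a})\to \mathbb H^*(Y_a;i_a^{-1}\cC^\bullet_{X_a})$ and the restriction $H_G^*(N_a)\to H_G^*(A)$ must commute. I would verify this at the level of global sections using Proposition~\ref{prop-fine-formal}. A global Cartan form on $N_a$ restricts to a germ along $A$, and pulling back by $r_1$ gives the comparison. It remains to check that the stalkwise quasi-isomorphisms of Theorem~\ref{thm-tubular-case} are induced by this single global restriction, so the resulting triangle commutes on hypercohomology.
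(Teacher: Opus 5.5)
Your argument is correct, but it rests on a different step from the paper's. The paper's proof is one line: ``combine Theorem~\ref{thm-tubular-surjective} with Theorem~\ref{thm-surjectivity-criterion}.'' In the tubular case criterion~(ii) becomes the map $H_G^*(N_a)\to H_G^*(A)$, and the paper relies on the equivalence (i)$\Leftrightarrow$(ii) to transfer this to $\kappa_{\mathrm{formal}}$.

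You never use that equivalence, so your appeals to Theorems~\ref{thm-surjectivity-criterion} and~\ref{thm-local-global} in Step~1 can be deleted. What you actually use is Theorem~\ref{thm-tubular-surjective}(iii) together with functoriality of restriction along $A\subset N_a\subset M$. These exhibit $\kappa_{\mathrm{formal}}$ as the classical restriction followed by an isomorphism, so the two surjectivity statements are equivalent formally.

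Yours is the sounder route. The implication (ii)$\Rightarrow$(i) in Theorem~\ref{thm-surjectivity-criterion} rests on surjectivity of $H_G^*(M)\to H_G^*(N_a)$. In the tubular case criterion~(ii) holds automatically, since that map is an isomorphism, as you observe. So the paper's route would prove that $\kappa_{\mathrm{formal}}$ is \emph{always} surjective there, which is false without properness. For a counterexample, let $S^1$ rotate the $z$-factor of $M=(\mathbb C\times\mathbb R^2)\setminus(\{|z|\ge 1\}\times\{0\})$, with $\mu=|z|^2$ and $a=1$. Then $A=S^1\times(\mathbb R^2\setminus\{0\})$ has an equivariant tubular neighborhood and $H^1_{S^1}(A)\cong\mathbb R$. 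On the other hand, Mayer--Vietoris for the cover of $M$ by $\{|z|<1\}$ and $\{|z|>1/2\}$ gives $H^1_{S^1}(M)=0$. The paper's phrasing buys only the presentation as a special case of the general criterion; your factorization is what actually proves the stated equivalence.

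The naturality square you flag closes along the lines you sketch. Fix an invariant neighborhood $U\subset N_a$ of $A$, and let $W$ be the set of points whose $r$-trajectory stays in $U$. By the tube lemma and equivariance of $r$, $W$ is an invariant open neighborhood of $A$. The homotopy $r|_{W\times[0,1]}$ shows that $H_G^*(U)\to H_G^*(W)$ factors through $H_G^*(A)$ via $r_1^*$. Hence $\varinjlim_U H_G^*(U)\cong H_G^*(A)$, compatibly with restriction from $N_a$. Since $X_a$ is paracompact, sections of $i_a^{-1}\cC^\bullet_{X_a}$ over the closed set $Y_a$ are germs of sections on neighborhoods. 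Combined with fineness, this identifies the colimit with $\cH_G^*(a;N_a)$.
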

	
	\begin{proof}
		Combine Theorem~\ref{thm-tubular-surjective} with Theorem~\ref{thm-surjectivity-criterion}.
	\end{proof}
	
	\begin{remark}[Summary of conditions]\label{rem-summary}
		The implications are:
		\[
		\begin{array}{c}
			\text{\(A=\mu^{-1}(a)\) admits an equivariant tubular neighborhood} \\
			\Downarrow \\
			\text{local equivariant homotopy retraction} \\
			\Downarrow \\
			\text{weakest local condition} \\
			\Downarrow \\
			\text{formal cohomology well-defined and independent of }N_a.
		\end{array}
		\]
		The existence of an equivariant tubular neighborhood holds, for example, when \(A\) is a closed smooth \(G\)-submanifold, or under the real analytic/proper or \(G\)-invariant Whitney stratified assumptions of Theorem~\ref{thm-singular-tubular}. The local equivariant homotopy retraction condition is strictly weaker and applies even when no global equivariant tubular neighborhood exists.
	\end{remark}
	
	\section{Comparison with existing frameworks}\label{sec-comparison}
	
	In this section we compare the formal framework of Sections~\ref{sec-formal}--\ref{sec-surjectivity} with the two main existing approaches to Kirwan surjectivity at singular values: the intersection cohomology approach of Kiem--Woolf \cite{KiemWoolf2006} and the resolution cohomology approach of Delarue--Ramacher--Schmitt \cite{DRS2023}. We compare the target spaces, the construction mechanisms, and the surjectivity conclusions, and we locate the formal framework within this landscape.
	
	\subsection{The Kiem--Woolf framework: intersection cohomology}\label{subsec-kw}
	
	Let \(K\) be a compact Lie group, \(M\) a Hamiltonian \(K\)-space with proper moment map \(\mu:M\to\mathfrak k^*\), and
	\[
	X:=\mu^{-1}(0)/K
	\]
	the singular symplectic quotient. Kiem and Woolf \cite{KiemWoolf2006} generalize the Kirwan map so that its target is the \emph{middle perversity intersection cohomology} \(IH^*(X)\), rather than the ordinary cohomology of \(X\).
	
	Their construction proceeds through a \emph{partial desingularization} of \(X\). One performs a sequence of equivariant symplectic blow-ups along the isotropy strata of \(\mu^{-1}(0)\) to obtain a Hamiltonian \(K\)-manifold \(\widetilde{M}\) on which \(0\) is a regular value of a lifted moment map. The quotient
	\[
	\widetilde{X}:=\widetilde{\mu}^{-1}(0)/K
	\]
	is then a smooth symplectic orbifold (or manifold), and there is a natural map \(\beta:\widetilde{X}\to X\). The \emph{intersection Kirwan map} is the composition
	\[
	\kappa_{IH}:\ H_K^*(M)\longrightarrow H_K^*(\widetilde{M})\longrightarrow H^*(\widetilde{X})\longrightarrow IH^*(X),
	\]
	where the first arrow is induced by the blow-up sequence, the second is the classical Kirwan map for the regular value \(0\) of \(\widetilde{\mu}\), and the third is a projection onto the intersection cohomology summand \cite[Section~1.2]{DRS2023}.
	
	The crucial point is that the existence and surjectivity of the projection \(H^*(\widetilde{X})\to IH^*(X)\) require a decomposition theorem. Kiem and Woolf prove such a theorem for the symplectic blow-up along a maximal depth stratum, showing that \(IH^*\) of the blow-up splits as a direct sum of terms including \(IH^*(X)\) \cite[Abstract]{KiemWoolf2006}. In the complex projective GIT setting, this decomposition is canonical, and \(\kappa_{IH}\) is known to be surjective \cite[Theorem~2.5]{KiemWoolf2006}. However, in the general symplectic setting, the relation between \(IH^*(X)\) and \(H^*(\widetilde{X})\) is not as clear, and there may be no canonical choice of the intersection Kirwan map \cite[p.~223]{DRS2023}. Consequently, \emph{the Kiem--Woolf Kirwan map is not known to be surjective in general}; even when it is defined, its image may be a proper subspace of \(IH^*(X)\).
	
	\subsection{The Delarue--Ramacher--Schmitt framework: resolution cohomology}\label{subsec-drs}
	
	Delarue, Ramacher, and Schmitt \cite{DRS2023} work with \(G=S^1\) and a compact Hamiltonian \(S^1\)-manifold \(M\) with moment map \(J:M\to\mathbb R\). They consider the singular quotient
	\[
	\mathscr{M}_0:=J^{-1}(0)/S^1.
	\]
	Sjamaar \cite{Sjamaar2002} had introduced a complex of differential forms on the stratified space \(\mathscr{M}_0\) whose cohomology is isomorphic to the singular cohomology \(H^*(\mathscr{M}_0;\mathbb R)\). Delarue--Ramacher--Schmitt extend this complex to the partial desingularization \(\widetilde{\mathscr{M}}_0\), obtaining a complex \(\widetilde{\Omega}^*(\widetilde{\mathscr{M}}_0)\) of \emph{resolution differential forms}. Its cohomology is isomorphic to the de Rham cohomology \(H^*(\widetilde{\mathscr{M}}_0)\) of the partial resolution \cite[Theorem~1.1]{DRS2023}.
	
	Using this complex, they define the \emph{resolution Kirwan map}
	\[
	\mathcal K:\ H_{S^1}^*(M)\longrightarrow H^*(\widetilde{\mathscr{M}}_0),
	\]
	and prove two main results.
	
	\begin{enumerate}[label=(\roman*)]
		\item \textbf{Weak surjectivity.} The image of \(\mathcal K\) contains the image of the singular cohomology \(H^*(\mathscr{M}_0;\mathbb R)\) under the natural inclusion
		\[
		H^*(\mathscr{M}_0;\mathbb R)\longrightarrow H^*(\widetilde{\mathscr{M}}_0)
		\]
		induced by the resolution map \(\widetilde{\mathscr{M}}_0\to\mathscr{M}_0\). Equivalently,
		\[
		\operatorname{Im}(\mathcal K)\supseteq \operatorname{Im}\bigl(H^*(\mathscr{M}_0;\mathbb R)\to H^*(\widetilde{\mathscr{M}}_0)\bigr).
		\]
		This is the precise meaning of ``weak surjectivity'' in \cite{DRS2023}.
		
		\item \textbf{Even-degree surjectivity.} If all fixed-point components of \(M\) have vanishing odd cohomology, then \(\mathcal K\) is surjective in even degrees:
		\[
		\mathcal K^{\mathrm{ev}}:\ H_{S^1}^{\mathrm{ev}}(M)\longrightarrow H^{\mathrm{ev}}(\widetilde{\mathscr{M}}_0)
		\]
		is surjective \cite[Abstract]{DRS2023}. Surjectivity in odd degrees fails in general; simple examples already show that no degree-preserving surjection \(H_{S^1}^*(M)\to H^*(\mathscr{M}_0;\mathbb R)\) exists in odd degrees \cite[Corollary~7.5]{DRS2023}.
	\end{enumerate}
	
	The mechanism behind weak surjectivity is a long exact sequence relating the singular cohomology \(H^*(\mathscr{M}_0)\), the resolution cohomology \(H^*(\widetilde{\mathscr{M}}_0)\), and the contributions of the fixed-point components \(F\) \cite[Theorem~4.8]{DRS2023}. The image of \(H^*(\mathscr{M}_0)\) in \(H^*(\widetilde{\mathscr{M}}_0)\) is precisely the part of the resolution cohomology that the resolution Kirwan map is guaranteed to hit; the remaining part comes from the exceptional divisors of the blow-up and may lie outside the image.
	
	\subsection{The formal framework: formal neighborhood cohomology}\label{subsec-formal-comparison}
	
	The formal framework of this paper occupies a different position in this landscape. Instead of replacing the target by \(IH^*(X)\) (as in Kiem--Woolf) or by \(H^*(\widetilde{X})\) (as in DRS), we keep the singular quotient \(Y_a=\mu^{-1}(a)/G\) itself and define a cohomology
	\[
	\cH_G^*(a):=\mathbb H^*\bigl(Y_a;\,i_a^{-1}\cC^\bullet_{X_a}\bigr)
	\]
	by restricting the slice-groupoid Cartan sheaf of a \(G\)-invariant neighborhood of \(\mu^{-1}(a)\). Under the local equivariant homotopy retraction condition, this is quasi-isomorphic to an intrinsic sheaf \(\cC^\bullet_{Y_a}\) on \(Y_a\) (Theorem~\ref{thm-local-global}), and the formal Kirwan map
	\[
	\kappa_{\mathrm{formal}}:\ H_G^*(M)\longrightarrow \cH_G^*(a)
	\]
	is defined without any blow-up or resolution procedure.
	
	When an equivariant tubular neighborhood of \(A=\mu^{-1}(a)\) exists (for example, under the assumptions of Theorem~\ref{thm-singular-tubular}), the formal cohomology is canonically isomorphic to the Borel equivariant cohomology:
	\[
	\cH_G^*(a)\cong H_G^*(\mu^{-1}(a)),
	\]
	and the formal Kirwan map is the classical restriction map. Thus the formal framework contains the classical Kirwan map as a special case. Its advantage is that it also applies when no global equivariant tubular neighborhood is available, provided the local equivariant homotopy retraction condition holds.
	
	The comparison can be summarized as follows.
	
	\begin{center}
		\begin{tabular}{p{0.22\textwidth}|p{0.22\textwidth}|p{0.22\textwidth}|p{0.22\textwidth}}
			\hline
			& \textbf{Kiem--Woolf} & \textbf{DRS} & \textbf{Formal framework} \\
			\hline
			Target space & \(IH^*(X)\) & \(H^*(\widetilde{\mathscr{M}}_0)\) & \(\cH_G^*(a)\) \\
			Construction & Partial desingularization + projection & Partial desingularization + resolution forms & Restriction of ambient Cartan sheaf to closed \(Y_a\) \\
			Surjectivity & Not known in general; surjective in complex GIT setting & Weak surjectivity; even-degree surjectivity under fixed-point vanishing & Surjective under local equivariant homotopy retraction; isomorphic to Borel cohomology when an equivariant tubular neighborhood exists \\
			Dependence on resolution & Essential & Essential & None \\
			\hline
		\end{tabular}
	\end{center}
	
	\subsection{Formal framework and tubular neighborhoods}\label{subsec-minimal-target}
	
	When an equivariant tubular neighborhood exists, the formal cohomology is not smaller than the Borel cohomology; it is isomorphic to it. The formal framework is therefore not a ``minimal target'' in the presence of a tubular neighborhood. Its role is different: it provides a cohomology theory which is defined using only the ambient Cartan sheaf restricted to the singular quotient, and which can be computed under the local equivariant homotopy retraction condition even when no global tubular neighborhood is available. In such situations, the formal Kirwan map can be surjective under checkable local hypotheses, whereas the resolution Kirwan map is not surjective onto \(H^*(\widetilde{\mathscr{M}}_0)\) in general.
	
	In the singular cone case covered by Theorem~\ref{thm-cone-surjective}, the formal cohomology is compatible with the resolution cohomology via the natural morphism of complexes of sheaves
	\[
	\cC^\bullet_{Y_a}\longrightarrow \widetilde{\cC}^\bullet_{Y_a}
	\]
	from the intrinsic Cartan sheaf of the cone level sets to the resolution complex of \cite{DRS2023}. The formal Kirwan map factors through the resolution Kirwan map after this morphism. The formal framework therefore recovers the even-degree surjectivity theorem of \cite{DRS2023} and explains the odd-degree failure as a genuine obstruction.
	
	\subsection{Summary}\label{subsec-comparison-summary}
	
	The three frameworks answer different questions. Kiem--Woolf ask for a Kirwan map to intersection cohomology; surjectivity is known only in the complex GIT setting and requires a canonical projection that may not exist in general. DRS ask for a Kirwan map to resolution cohomology; they prove weak surjectivity and even-degree surjectivity under a fixed-point vanishing hypothesis. The formal framework asks for a Kirwan map to a cohomology built directly from the singular quotient via the ambient Cartan sheaf; it is surjective under a local equivariant homotopy retraction condition that is checkable in examples and holds automatically in the smooth and cone cases. When an equivariant tubular neighborhood exists, the formal cohomology is canonically isomorphic to the Borel equivariant cohomology of the level set, so the formal Kirwan map reduces to the classical restriction map. The formal framework is therefore a genuine extension of the classical Kirwan map to situations where no global equivariant tubular neighborhood is available.

	\section{Examples}\label{sec-examples}
	
	We now illustrate the formal framework with examples. We compute the formal cohomology, verify or disprove the local equivariant homotopy retraction condition, and determine surjectivity of \(\kappa_{\mathrm{formal}}\).
	
	\begin{example}[Regular value, free action]\label{ex-regular-free}
		Let \(S^1\) act on \(\mathbb C\) by \(\lambda\cdot z=\lambda z\), with \(\mu(z)=|z|^2\). Take \(a=1\). Then \(\mu^{-1}(1)=S^1\), the action is free, and \(M//_1S^1=\{\mathrm{pt}\}\). By Theorem~\ref{thm-tubular-surjective},
		\[
		\cH_{S^1}^*(1)\cong H_{S^1}^*(S^1)\cong H^*(\mathrm{pt})=\mathbb R.
		\]
		The formal Kirwan map is \(H_{S^1}^*(\mathbb C)\cong\mathbb R[u]\to\mathbb R\), \(u\mapsto 0\), which is the classical Kirwan map and is surjective.
	\end{example}
	
	\begin{example}[Singular value, smooth fiber]\label{ex-singular-smooth}
		Let \(S^1\) act on \(\mathbb C\) by \(\mu(z)=|z|^2\), and take \(a=0\). Then \(a\) is a critical value of \(\mu\), but \(\mu^{-1}(0)=\{0\}\) is a smooth point. The slice at \(0\) is \(S_0=\{0\}\), \(G_0=S^1\), and \(C_0=\{0\}\). The local condition holds trivially, and an equivariant tubular neighborhood exists. By Theorem~\ref{thm-tubular-surjective},
		\[
		\cH_{S^1}^*(0)\cong H_{S^1}^*(\{0\})\cong\mathbb R[u],\qquad |u|=2.
		\]
		The formal Kirwan map is an isomorphism, hence surjective. This confirms that the relevant criterion is the existence of an equivariant tubular neighborhood, not the regularity of \(a\).
	\end{example}
	
	\begin{example}[\(S^1\ltimes\mathbb C^2\), zero level]\label{ex-cone}
		Let \(S^1\) act on \(\mathbb C^2\) by \(\lambda\cdot(z,w)=(\lambda z,\lambda^{-1}w)\), with \(\mu(z,w)=|z|^2-|w|^2\), and take \(a=0\). The level set \(\mu^{-1}(0)=\{|z|=|w|\}\) is a cone with vertex at \(0\). The data are real analytic and \(\mu\) is proper, so Theorem~\ref{thm-singular-tubular} applies: there exists an \(S^1\)-equivariant tubular neighborhood \(U\) of \(\mu^{-1}(0)\) and a strong deformation retraction \(U\to\mu^{-1}(0)\). By Theorem~\ref{thm-tubular-case},
		\[
		\cH_{S^1}^*(0)\cong H_{S^1}^*(\mu^{-1}(0))\cong H_{S^1}^*(\mathrm{pt})\cong\mathbb R[u],\qquad |u|=2.
		\]
		The formal Kirwan map \(H_{S^1}^*(\mathbb C^2)\cong\mathbb R[u]\to\cH_{S^1}^*(0)\) is an isomorphism, hence surjective. This is the formal counterpart of the DRS even-degree surjectivity, and it shows that in the presence of an equivariant tubular neighborhood the formal cohomology coincides with the Borel equivariant cohomology.
	\end{example}
	
	\begin{example}[\(T^k\ltimes\mathbb C^n\), zero level]\label{ex-torus-cone}
		Let \(T^k\) act on \(\mathbb C^n\) with weights \(a_j\in\mathbb Z^k\), and let \(\mu(z)=\sum |z_j|^2a_j\). At \(a=0\), the level set is a cone. The data are real analytic and proper, so Theorem~\ref{thm-singular-tubular} applies and an equivariant tubular neighborhood exists. By Theorem~\ref{thm-tubular-case},
		\[
		\cH_{T^k}^*(0)\cong H_{T^k}^*(\mu^{-1}(0)).
		\]
		At a point \(p\) represented by \(x\), the stalk is \(S(\mathfrak g_x^*)^{G_x}\), a polynomial ring in \(\dim\mathfrak g_x\) generators of degree \(2\). The formal Kirwan map is the classical restriction map and is surjective in even degrees under the fixed-point vanishing hypothesis.
	\end{example}
	
	\begin{example}[Formal vs.\ intersection cohomology vs.\ resolution for \(S^1\ltimes\mathbb C^2\)]\label{ex-all-three}
		For \(S^1\ltimes\mathbb C^2\) with weights \((1,-1)\) and \(a=0\), the intersection cohomology and the resolution cohomology are
		\[
		IH^*(X)\cong\mathbb R\oplus\mathbb R[-2],\qquad
		H^*(\widetilde{\mathscr{M}}_0)\cong\mathbb R\oplus\mathbb R[-2]\oplus\mathbb R[-4].
		\]
		By the previous example, the formal cohomology is
		\[
		\cH_{S^1}^*(0)\cong H_{S^1}^*(\mu^{-1}(0))\cong\mathbb R[u],
		\]
		which is infinite-dimensional. The Kiem--Woolf map to \(IH^*(X)\) is not known to be surjective in general; the DRS map to \(H^*(\widetilde{\mathscr{M}}_0)\) is weakly surjective but not surjective; the formal map to \(\cH_{S^1}^*(0)\) is the classical restriction map and is surjective. The formal target contains the Borel cohomology and is therefore larger than the resolution target in this example; its advantage is that it is defined without any resolution procedure.
	\end{example}
	
	The examples establish the following points.
	
	\begin{enumerate}[label=(\roman*)]
		\item When an equivariant tubular neighborhood exists—for instance under the real analytic/proper or Whitney stratified assumptions of Theorem~\ref{thm-singular-tubular}—the formal cohomology is canonically isomorphic to the Borel equivariant cohomology of the level set, and the formal Kirwan map is the classical restriction map.
		\item The local equivariant homotopy retraction condition is strictly weaker than the existence of a global equivariant tubular neighborhood. It guarantees that the formal cohomology is well-defined and independent of the choice of neighborhood, and it yields surjectivity of the formal Kirwan map under additional global hypotheses.
		\item In the singular cone case, the formal framework recovers the DRS even-degree surjectivity theorem while avoiding the resolution construction.
	\end{enumerate}

	\section{Open problems}\label{sec-open}
	
	We conclude with several open problems.
	\begin{enumerate}[label=(\arabic*)]
		\item Characterize the singular values \(a\) for which the local equivariant homotopy retraction condition holds but no global equivariant tubular neighborhood exists, and compute the resulting formal cohomology.
		\item Determine whether the formal Kirwan map is surjective in odd degrees under additional hypotheses on the slice level sets, or whether the odd-degree failure is unavoidable.
		\item Compare the formal cohomology with the intersection cohomology of the singular quotient in cases where both are defined.
		\item Extend the formal framework to non-compact groups and to Hamiltonian actions with proper moment maps.
	\end{enumerate}

	\section{Tool Use}
	
	During the preparation of this manuscript, the author used ChatGPT (version 6.0) to check the textual aspects of this manuscript, and made corrections to any errors and inappropriate expressions. The author consulted ChatGPT for suggestions on improving the manuscript and made revisions. The author has reviewed and edited the output and takes full responsibility for the content of this publication.

\end{document}